\documentclass[11pt]{article}
\usepackage{amssymb,amsmath,amsfonts,amsthm,mathtools,color}

\usepackage{mathrsfs}
\usepackage{dsfont}
 \usepackage{bm} 

\usepackage{url} 

\usepackage[overload]{empheq}

\usepackage{comment} 
\usepackage[title,titletoc,header]{appendix}
\usepackage{graphicx,subfig}
\usepackage{float} 
\usepackage{dsfont} 

\usepackage{paralist}

\usepackage{cases}

\usepackage{tikz}
\usetikzlibrary{arrows,positioning,shapes.geometric}

\usepackage[linesnumbered, ruled,vlined]{algorithm2e}
\SetKwInput{KwInput}{Input}                

\graphicspath{ {./figures/} }

\usepackage{indentfirst}
\usepackage{multicol}
\usepackage{booktabs}
\usepackage{url}
\usepackage[outdir=./]{epstopdf} 

\usepackage[shortlabels]{enumitem}

\usepackage{hyperref}
\hypersetup{
    colorlinks=true, 
    linktoc=all,     
    linkcolor=blue,  
}

\newtheorem{lemma}{Lemma}[section]
\newtheorem{theorem}{Theorem}[section]
\newtheorem{proposition}{Proposition}[section]
\newtheorem{definition}{Definition}[section]
\newtheorem{remark}{Remark}[section]
\newtheorem{example}{Example}[section]
\newtheorem{assumption}{Assumption}[section]
\newtheorem{corollary}{Corollary}[section]
\numberwithin{equation}{section}

\def \bX{\boldsymbol{X}}

\def \bu{\boldsymbol{u}}

\def \bmu{\boldsymbol{\mu}}

\def \cA{\mathcal{A}}
\def \cB{\mathcal{B}}
\def \cC{\mathcal{C}}
\def \cF{\mathcal{F}}
\def \cG{\mathcal{G}}

\def \cL{\mathcal{L}}
\def \cN{\mathcal{N}}
\def \cP{\mathcal{P}}

\def \cW{\mathcal{W}}

\def \tX{\Tilde{X}}
\def \tY{\Tilde{Y}}

\def \EE{\mathbb{E}}
\def \FF{\mathbb{F}}

\def \MM{\mathbb{M}}
\def \NN{\mathbb{N}}
\def \PP{\mathbb{P}}
\def \RR{\mathbb{R}}

\newcommand{\Tan}{\operatorname{Tan}}
\newcommand{\Tr}{\operatorname{tr}}
\newcommand{\op}{\operatorname{op}}

\DeclareMathOperator*{\esssup}{ess\,sup}

\title{Limit Theory for $N$-Player $\alpha$-Potential Games\thanks{We thank Mao Fabrice Djete,
Alp\'ar R. M\'esz\'aros,
Xiaolu Tan, and   participants at the XIII Bachelier World Congress of the Bachelier Finance Society and the 3rd ETH-HK-Imperial Joint Workshop on Quantitative Finance, for their valuable suggestions and comments.}}
\author{
Xin Guo \thanks{University of California, Berkeley, Department of IEOR, email: xinguo@berkeley.edu}
\and
Meng Wang\thanks{Imperial College London, Department of Mathematics, email: m.wang25@imperial.ac.uk, yufei.zhang@imperial.ac.uk} \and Yufei Zhang\footnotemark[3]
}
\date{}

\begin{document}

\maketitle 
\begin{abstract}

The recently introduced framework of $\alpha$-potential games facilitates the analysis of finite-player dynamic games by reducing the search for approximate Nash equilibria to the minimization of a single $\alpha$-potential function.
In this work, we investigate the large population limit of $\alpha$-potential
games, and show that potential mean
field games (MFGs) arise naturally. 
Specifically, we show that
both the optimal values and the minimizers of normalized $N$-player 
$\alpha_N$-potential functions converge to those of a mean field control (MFC)
problem with measure-valued controls.
We further show that  $\lim_{N\to\infty}\alpha_N= 0$ is equivalent to standard conditions for potential  MFGs,
and 
provide a unified construction of 
  potential functions for MFGs. 
  A key technical ingredient is the establishment of a Poincar\'e lemma for Wasserstein space.
We also establish that the objective of the limiting MFC problem is a potential function for the corresponding MFGs. 
{Together, our results not only
yield new constructions  of potential MFGs from   finite-player games  through the asymptotic condition $\lim_{N\to \infty}\alpha_N= 0$, but also establish propagation of chaos from $N$-player games to MFGs for general controlled diffusions with common noise and non-separable control interactions.
}
\end{abstract}

\medskip
\noindent
\textbf{Key words.} 
$\alpha$-potential game, potential mean field game of control, mean field control,
measure-valued control, 
Poincar\'e  lemma on  Wasserstein space,
propagation of chaos

\medskip
\noindent
\textbf{AMS subject classifications.}
60Fxx,	
91A06,  
91A14,  
91A15,  
91A16  

\medskip

\section{Introduction}

\paragraph{Potential games and $\alpha$-potential games.} Potential games provide a powerful framework for analyzing strategic interactions among multiple agents by reducing equilibrium computation to the optimization of a single global objective, known as a potential function. Since the seminal work of Monderer and Shapley \cite{monderer1996potential}, potential structures have played a central role in game theory, optimization, and learning in multi-agent systems. 
In stochastic differential games, however,  identifying exact potential structures remains challenging due to the coupling between players' controls and state dynamics \cite{guo2025towards}. Recent work by \cite{guo2025alpha} introduces the framework of $\alpha$-potential game, which relaxes the exact potential condition by allowing a controlled approximation error $\alpha$. This framework has proven effective for studying approximate Nash equilibria (NEs) in finite-player dynamic  games, transforming the equilibrium problem into the minimization of an $\alpha$-potential function 
\cite{di2025alpha,guo2025distributed,
kalaria2025alpha,
jordan2026independent,neuman2026potential,plank2026learning}.

\paragraph{MFGs and potential MFGs.} At the same time, mean field game (MFG) theory provides a tractable {\it asymptotic} framework for studying strategic interactions among a large number of {\it weakly} interacting {\it homogeneous} agents \cite{lasry2007mean,huang2006large}. By passing to the limit as the number of players tends to infinity, MFGs replace finite-player interactions with interactions through the population distribution, leading to  equilibrium problems on the space of probability  measures. 

A particularly relevant subclass is that of potential MFGs, where the equilibrium can be characterized through a mean field control (MFC) problem.  
Existing approaches characterize potential MFGs by matching the characterization of mean field equilibria with that of the minimizers of an MFC problem  (see e.g., \cite{
lasry2007mean,
carmona2018probabilistic1, cardaliaguet2017learning,cardaliaguet2019master,cecchin2022weak,
graber2025remarks,hofer2026optimal}).   This typically requires restrictive structural assumptions to characterize both   mean field equilibria and   MFC minimizers, 
and imposes   that the  cost functions of the MFG are   derivatives of suitable   potential  functions, so that the two characterizations coincide.
This appears conceptually different from the definition in $N$-player potential games \cite{monderer1996potential, guo2025towards}.
Recently, \cite{hofer2026optimal} studies potential MFGs by starting from an MFC problem and shows that the perturbation of the representative player's objective in a suitable MFG coincides with the \emph{derivative} of the MFC objective along an interpolation path, thereby narrowing the conceptual gap between the two potential game frameworks.

The natural next step is to understand the  precise relationship between finite-player $\alpha$-potential games and potential MFGs.  
This is the goal of our paper.

\paragraph{Our work.} This paper establishes a rigorous connection between these two frameworks, by studying the limiting behavior of $N$-player $\alpha_N$-potential games as $N \to \infty$ and show that potential MFGs arise naturally as the limit when $\lim_{N\to \infty}\alpha_N= 0$. 
This connection 
yields new constructions  of potential MFGs from a finite-player game,  through the asymptotic condition $\lim_{N\to \infty}\alpha_N= 0$. 
Our approach does not rely on characterizing mean field equilibria, thereby avoiding the additional convexity conditions required in existing approaches.
It also 
provides a new route to propagation of chaos for $N$-player games: 
the limit of $\alpha_N$-NEs as the   minimizers of  some associated $\alpha_N$-potential functions is an equilibrium  of the limiting MFG.
{This result applies to general controlled diffusions with common noise and non-separable nonlinear state–control interactions.}
Specifically,

\begin{itemize}[wide]
\item

We present a new construction of $\alpha_N$-potential functions for a class of $N$-player stochastic differential games in which each player's state follows a controlled diffusion driven by idiosyncratic and common noise. The drift and diffusion coefficients   depend nonlinearly on the player's own state and control, while the cost   depends on 
all players' states and controls. The resulting $\alpha_N$-potential function is formulated as a finite-dimensional control problem, and the corresponding approximation error $\alpha_N$ is characterized analytically in Theorem~\ref{thm:alpha2}.

Unlike existing constructions \cite{guo2025alpha,guo2025distributed} based on derivatives of the controlled state  processes with respect to the controls,   this new construction exploits the decoupled structure of the players' state dynamics and constructs the potential through path integrals in an enlarged state-control space. It avoids the previous infinite-dimensional control formulation of $\alpha$-potential functions,  accommodates Lipschitz continuous state coefficients, and facilitates a more tractable analysis of the corresponding  
$\alpha_N$-NEs and their asymptotic behavior as the number of players grows.

\item 
We analyze the large-population limit of the $N$-player $\alpha_N$-potential functions, by adopting the  propagation-of-chaos framework \cite{djete2022extended, djete2022mckean2}. We show that, after normalizing the $N$-player $\alpha_N$-potential function by $N$, both the optimal values and the approximate minimizers of the normalized $\alpha_N$-potential functions \color{black}converge to those of an MFC problem on a lifted space of measure-valued controls (Theorem \ref{thm:mean_field_limit2}).
The lifted control space involves a hierarchy of probability measures, capturing three effects separately: idiosyncratic randomness, temporal oscillations of control processes, and common noise or additional randomness appearing in the limit. 

This lifting of the control space for the limiting MFC problem follows closely the work of  \cite{djete2022extended}, except that  the $N$-player $\alpha_N$-potential functions involve   cost functions that depend explicitly on the number of players $N$ and   do not coincide with the  cost functions in the limiting MFC problem. 
Consistent with \cite{djete2022extended}, this lifting is essential  due to the nonlinear dependence of the running cost on the control distribution. Different from \cite{djete2022extended}, we show that such a lifting is necessary even when the state dynamics are independent of the population distribution, in order to capture all  limits of approximate minimizers of $N$-player $\alpha_N$-potential functions. This point is demonstrated through several  examples in Section \ref{sec:examples}, both with and without common noise.

 \item We  establish equivalent characterizations of $\lim_{N\to \infty}\alpha_N=0$ (Theorems  \ref{thm:alpha0_equiv} and   \ref{thm:construction_F_G}). In particular, we show its equivalence to the standard
conditions imposed for potential MFGs, thereby demonstrating that these conditions arise naturally
as limits of $N$-player  $\alpha_N$-potential games with vanishing $\alpha_N$. 
These characterizations construct  mean field potential functions explicitly from the cost functions of MFGs, extending previous results for $N$-player potential games with cost functions having symmetric Hessians \cite{monderer1996potential, guo2025towards}.

Technically, these results are   Poincar\'e  lemma on   Wasserstein spaces, stating that  suitable  closed differential forms on this space are exact, i.e., they coincide with the differential of a function on the Wasserstein space, which in turn is given by path integrals of the   differential form. 
The proof relies on extending      Green’s theorem in \cite{gangbo2011differential}
to less regular differential forms on the Wasserstein space $\cP_2(\mathbb R^p)$,
which essentially  states that the surface integral of some 1-form along the boundary of a domain in $\cP_2(\mathbb R^p)$ equals the volume integral of its exterior derivative over the domain.
(See Proposition \ref{prop:green_formula} and
Remark \ref{rmk:green_regularity}).

\item Under the condition $\lim_{N\to \infty}\alpha_N=0$, 
we   prove that     
 the objective functional of the     MFC problem, namely the limit of the normalized  $N$-player $\alpha_N$-potential functions,   
  serves as a potential function for an associated MFG  with measure-valued controls  (Theorem~\ref{thm:MFG_potential}).
 We further prove that the $\alpha_N$-NEs of   $N$-player games that minimize  the $\alpha_N$-potential functions converge to a mean field equilibrium   of the MFG  
(Corollary \ref{cor:mfg_poc}).
This propagation of chaos result 
from  $N$-player games  to MFGs  
 holds for general controlled diffusions with common noise and non-separable state–control interactions in the cost functions.

Our work differs from the earlier work  on potential MFGs 
(see e.g., \cite{
lasry2007mean,
carmona2018probabilistic1, cardaliaguet2017learning,cardaliaguet2019master,cecchin2022weak, 
graber2025remarks,hofer2026optimal})  in  two main respects:
First, existing works focus on MFGs with infinitely many players,
and 
impose conditions on the cost functions to match the equilibrium characterization with the minimizer characterization of an MFC problem.   
In contrast, 
we bypass the equilibrium/minimizer characterization and directly show that potential MFGs arise as   the limits of $N$-player $\alpha_N$-potential games (Remark~\ref{rmk:pmfg_definition}).
Specifically, we start from an $N$-player game, identify the limit of the  $\alpha_N$-potential functions as the objective of an MFC problem,
and further show that this objective is a potential function for an  MFG. Our work presents an explicit  way to construct a potential MFG from a finite-player game,  through the asymptotic condition $\lim_{N\to \infty}\alpha_N= 0$.
Second,    existing works focus on potential MFGs with strict controls;   we instead develop the theory for potential MFGs with measure-valued controls. As discussed earlier, this extension is essential for ensuring that limits of approximate NEs of the $N$-player games correspond to equilibria of the  MFG.

\end{itemize}

\paragraph{Organization of the paper.}
Section \ref{sec:N_player} introduces  the  finite-player stochastic differential game and constructs the associated $\alpha_N$-potential function. Section \ref{sec:convergence} studies the large-population limit and establishes the convergence to an MFC problem with measure-valued controls. 
Section \ref{sec:consistent_mfg} proves the equivalence between  $\lim_{N\to \infty}\alpha_N=0$  and classical potential MFG conditions, provides a unified construction of mean field potential functions, 
and shows the consistency between the limiting MFC problem and potential MFGs. Section \ref{sec:extensions} discusses possible extensions.
Proofs of main results are in Section \ref{sec:proof}.

\paragraph{Notation.}

Let $\NN^*$ be the set of positive integers, and for each $N\in \NN^*$, let $[N]\coloneqq \{1,\ldots, N\}$. 
Given a probability space $(\Omega,\cF,\PP)$ and an Euclidean space $(E,\vert\cdot\vert)$, for an square-integrable $E$-valued random variable $X$, denote $\Vert X\Vert_{L^2}\coloneqq \EE[\vert X\vert^2]^{\frac{1}{2}}$.
Given   $T>0$ and a filtered probability space $(\Omega,\cF,\FF=(\cF_t)_{t\in[0,T]},\PP)$, for each $p\geq 1$ and Euclidean space $(E,\vert\cdot\vert)$, denote by  $\mathcal{S}^p(E)$   the space of $E$-valued $\mathbb{F}$-progressively measurable processes $X:\Omega\times [0,T]\rightarrow E$ satisfying $\Vert X\Vert_{\mathcal{S}^p(E)}\coloneqq \mathbb{E}\left[\sup_{s\in [0,T]}\vert X_s\vert^p \right]^{1/p}<\infty$, and by  $\mathcal{H}^p(E)$   the space of $E$-valued $\mathbb{F}$-progressively measurable processes $X:\Omega\times [0,T]\rightarrow E$ satisfying $\Vert X\Vert_{\mathcal{H}^p(E)}\coloneqq \mathbb{E}\left[\int_0^T\vert X_s\vert^p ds\right]^{1/p}<\infty$.

For each  topological space $E$,  denote by  $\cB(E)$   the Borel $\sigma$-algebra and by $\cP(E)$   the set of all Borel probability measures on $E$. 
Given    topological spaces $E$ and $F$, for  each  Borel  measurable map $f:E\to F$ and each  $\mu\in\cP(E)$, denote by  $\mu\circ f^{-1}\in\cP(F)$  the pushforward of $\mu$ under $f$ such that  $(\mu\circ f^{-1})(A)\coloneqq \mu(f^{-1}(A))$ for all $A\in\cB(F)$.

For each metric space $(E,\rho)$ and $p\geq 1$, let $\cP_p(E)$ be  the set of probability measures with finite $p$-th moments, i.e.,
$ 
\cP_p(E)\coloneqq \left\{\mu\in\cP(E): \int_E \rho(e,e_0)^p\mu(de)<\infty \text{ for some } e_0\in E \right\},
$  equipped with the $p$-Wasserstein distance defined by
\begin{equation*}
\cW_p(\mu,\mu')\coloneqq \left( \inf_{\pi\in\Pi(\mu,\mu')} \int_{E\times E} \rho(e,e')^p \pi(de,de') \right)^{\frac{1}{p}},
\quad  \mu,\mu'\in\cP_p(E),
\end{equation*}
where $\Pi(\mu,\mu')$ denotes the set of all probability measures $\pi$ on $E\times E$ such that $\pi(de,E)=\mu(de)$ and $\pi(E,de')=\mu'(de')$.
Given a function $h:\cP_2(\RR^p) \to \RR$, denote by  $\partial_{\mu}h$ its Lions derivative (L-derivative) as defined in \cite[Definition 5.22]{carmona2018probabilistic1}, by $\frac{\delta h}{\delta \mu}$  its linear functional derivative as defined in \cite[Definition 5.43]{carmona2018probabilistic1}.

We introduce the following function spaces: 
given $m,n\in\NN^*$, a topological space  $E$,  and a  function $f:[0,T]\times E\ni (t,x)\mapsto f(t,x)\in \RR^{m\times n}$ that is measurable in $t$ and continuous in $x$, we define $\Vert f\Vert_{\infty}\coloneqq \esssup_{t\in [0,T]}\sup_{x\in E}\Vert f(t,x)\Vert_{\op}$, with $\Vert\cdot\Vert_{\op}$ being the operator norm of a matrix. For any continuous function $g:E\to \RR^{m\times n}$, define $\Vert g\Vert_{\infty}\coloneqq \sup_{x\in E}\Vert g(x)\Vert_{\op}$. 
Given $p\in\NN^*$,
denote by 
$C(\RR^p)$ (resp.~$C_b(\RR^p)$) the space of continuous 
(resp.~and bounded) functions on $\RR^p$,
  by $C^2(\RR^p)$ the space of twice continuously differentiable functions on $\RR^p$, and by $C_b^2(\RR^p)$ the space of  functions in $C^2(\RR^p)$ with bounded first and second-order derivatives.
  
 For each $p\in\NN^*$,
 let  $\cC^{0,2,2}([0,T]\times\RR^p\times\cP_2(\RR^p))$ be the space of 
measurable functions   $h:[0,T]\times \RR^p\times\cP_2(\RR^p)\ni(t,x,\mu)\mapsto h(t,x,\mu)\in\RR$ such that  
for all $(t,x,\mu)\in [0,T]\times \RR^p\times \cP_2(\RR^p)$, 
\begin{enumerate}[label=(\roman*)]
\item $h(t,\cdot,\mu)$ is twice differentiable, and $\partial_x h(t,\cdot)$ and $\partial_{xx}^2 h(t,\cdot)$ are continuous.
\item   $h(t,x,\cdot)$ is twice   L-differentiable, and   $\RR^p\times\cP_2(\RR^p)\times\RR^p\ni (x,\mu,x')\mapsto \partial_{\mu} h(t,x,\mu)(x') \in\RR^p$ and $\RR^p\times\cP_2(\RR^p)\times\RR^p\times\RR^p\ni (x,\mu,x',x'')\mapsto \partial_{\mu}^2 h(t,x,\mu)(x',x'')\in\RR^{p\times p}$ are continuous.
\item  $\RR^p\times\cP_2(\RR^p)\times\RR^p\ni (x,\mu,x')\mapsto \partial_{\mu} h(t,x,\mu)(x')$ is differentiable in $(x,  x')$, and $\RR^p\times\cP_2(\RR^p)\times\RR^p\ni (x,\mu,x')\mapsto (\partial_x\partial_{\mu} h(t,x,\mu)(x'),\partial_{x'}\partial_{\mu} h(t,x,\mu)(x'))$ is continuous.
\item There exists   $C>0$ such that for all $(t,x,\mu,x')\in [0,T]\times\RR^p\times\cP_2(\RR^p)\times\RR^p$,
$
\vert \partial_x h(t,x,\mu)\vert + \vert \partial_{\mu} h(t,x,\mu)(x')\vert \leq  C\left(1+\vert x\vert +\vert x'\vert +\int_{\mathbb{R}^n}\vert x'' \vert  \mu(dx'') \right)
$. Moreover, 
  $\partial^2_{\mu}h$ and $\partial_x\partial_{\mu}h$ are bounded.
\end{enumerate}
Note that 
by \cite[Remark 4.16]{carmona2018probabilistic2}, for all $(t,x)\in[0,T]\times\RR^p$, $\cP_2(\RR^p)\ni\mu\mapsto \partial_x h(t,x,\mu)$ is L-differentiable, and $\partial_{\mu}\partial_x h(t,x,\mu)(x') =[\partial_x\partial_{\mu} h(t,x,\mu)(x')]^{\top}$. 
We denote by   $  \cC^{2,2}(\RR^p\times\cP_2(\RR^p))$
 the space of time-independent functions
in   $\cC^{0,2,2}([0,T]\times\RR^p\times\cP_2(\RR^p))$.

\section{$\alpha$-Potential Functions for $N$-Player Games}
\label{sec:N_player}

This section introduces the class of finite-player stochastic differential games considered in this paper, presents a new construction of $\alpha$-potential functions for such games, and characterizes the   upper bound of $\alpha$ analytically.  

\subsection{Formulation of $N$-player games}
Let $T>0$ be a given terminal time,  $N,n,d,k,l\in\mathbb{N}^*$, and $(\Omega, \mathcal{F},\mathbb{F}=(\mathcal{F}_t)_{t\in[0,T]},\mathbb{P})$ be a   filtered probability space 
satisfying the usual
conditions, 
which supports the following mutually independent objects: $N$ square integrable $\cF_0$-measurable $\mathbb{R}^n$-valued random variables $(\xi_i)_{i=1}^N$, $N$ independent $d$-dimensional $\mathbb F$-Brownian motions $(W_i)_{i=1}^N$, and an $l$-dimensional $\mathbb F$-Brownian motion $B$. 
Here $\xi_i$ and $W_i$   represent the initial state and idiosyncratic noise  of player $i$'s state dynamics, 
respectively, 
and $B$ represents the common noise affecting all players' state dynamics. 
For all $i\in [N]$, let  $A_i\subset \mathbb{R}^k$ be a measurable set representing  player $i$'s action set, and   $\mathcal{A}_i$ be the set of processes $u_i\in\mathcal{H}^2(\mathbb{R}^k)$ taking values in $A_i$, representing the set of admissible  controls of player $i$. Let 
$\mathcal{A}=\prod_{i\in[N]}\mathcal{A}_i$ be the  set of joint control profiles
of all players, and for each $i\in[N]$, let $\mathcal{A}_{-i}=\prod_{j\in[N]\backslash\{i\}}\mathcal{A}_j$ be the   set of control profiles of all players except player $i$.
We denote by 
 $\bu=(u_j)_{j\in[N]}$ and $\bu_{-i}=(u_j)_{j\in[N]\backslash\{i\}}$   a generic element of $\mathcal{A}$ and $\mathcal{A}_{-i}$, respectively.

 For each $i\in[N]$ 
 and $u_i\in \cA_i$, 
 player $i$ considers the following state dynamics: 
 \begin{equation}\label{eq:state}
dX_{t,i} = b_i(t,X_{t,i},u_{t,i})dt + \sigma_i(t,X_{t,i},u_{t,i}) dW_{t,i} + \gamma_i(t,X_{t,i},u_{t,i})dB_t,
\quad 
t\in [0,T]; 
\quad X_{0,i}=\xi_i,
\end{equation}
where $b_i:[0,T]\times \mathbb{R}^n  \times \mathbb{R}^k\rightarrow\mathbb{R}^n,\sigma_i:[0,T]\times \mathbb{R}^n  \times \mathbb{R}^k\rightarrow\mathbb{R}^{n\times d}$ and $\gamma_i:[0,T]\times \mathbb{R}^n  \times \mathbb{R}^k\rightarrow\mathbb{R}^{n\times l}$ are given measurable functions.

Player  
 $i$ determines their optimal strategy by minimizing the following cost functional $J_i:\mathcal{A}\rightarrow\mathbb{R}$:
\begin{equation}
\label{eq:N_player_cost}
\inf_{u_i\in \cA_i} J_i(\bu), \quad \textnormal{with} \quad
J_i(\bu) \coloneqq \mathbb{E}\left[\int_0^T f_i(t,\bX_t^{\bu},\bu_t) dt + g_i(\bX_T^{\bu})\right],
\end{equation}
where 
$\bX_t^{\bu} = (X_{t,i}^{u_i})_{i\in [N]}$, and 
$f_i:[0,T]\times\mathbb{R}^{Nn}\times \mathbb{R}^{Nk} \rightarrow \mathbb{R}$ and $g_i:\mathbb{R}^{Nn} \rightarrow \mathbb{R}$ are measurable functions.

We impose the standard Lipschitz continuity  condition on  the coefficients 
$\{b_i,\sigma_i,\gamma_i\}_{i\in[N]}$. It ensures  that  for each $u_i\in \cA_i$, 
\eqref{eq:state}  admits a unique strong solution  $X^{u_i}_i\in \mathcal{S}^2(\mathbb R^n)$.

\begin{assumption}\label{assumption:SDE}
For all $\phi\in\{b_i,\sigma_i,\gamma_i\}_{i\in[N]}$, there exists a constant  $L\ge 0$ such that
for all $t\in [0,T]$, $x_1,x_2\in \mathbb{R}^n$, and $a_1,a_2\in \mathbb{R}^k$,
$|\phi(t,0,0)| \leq L$ and 
$|\phi(t,x_1,a_1)- \phi(t,x_2,a_2)| \leq L(\vert x_1-x_2\vert +  \vert a_1-a_2\vert).
$
\end{assumption}

We impose  the following regularity assumption on  $(f_i,g_i)_{i\in [N]}$,
 so that the cost functional 
$J_i:\mathcal{A}\rightarrow\mathbb{R}$  is well defined. 
\begin{assumption}\label{assumption:cost}
For all $i\in[N]$ and $t\in[0,T]$, $\mathbb{R}^{Nn}\times \mathbb{R}^{Nk} \ni (x,a)\mapsto (f_i(t,x,a),g_i(x))\in\mathbb{R}\times\mathbb{R}$ is twice continuously differentiable, and the functions 
$(f_i(\cdot,0,0),\partial_{(x,a)}f_i(\cdot,0,0)):[0,T]\to \mathbb{R}\times \mathbb{R}^{N(n+k)}$ and 
$(\partial^2_{(x,a)(x,a)}f_i, \partial^2_{xx}g_i):[0,T]\times \mathbb{R}^{Nn}\times \mathbb{R}^{Nk}\rightarrow \mathbb{R}^{N(n+k)\times N(n+k)}\times \mathbb{R}^{Nn\times Nn}$ are bounded. 
\end{assumption}

To characterize the rational behavior in the   $N$-player game 
defined by 
\eqref{eq:N_player_cost}, 
we recall the solution concept of $\varepsilon$-Nash equilibrium,
defined as a joint control profile in which no player can improve their performance by more than $\varepsilon$ through any unilateral deviation.

\begin{definition}
For any $\varepsilon\geq 0$, $\bu=(u_i)_{i\in [N]}\in\mathcal{A}$ is  an $\varepsilon$-Nash equilibrium 
 ($\varepsilon$-NE)
of the $N$-player  game  if 
$$J_i(\bu)\leq J_i(u_i',\bu_{-i}) + \varepsilon, \quad \forall i\in [N], u_i'\in\mathcal{A}_i.$$
When $\varepsilon=0$, $\bu$ is called   a Nash equilibrium (NE).
\end{definition}

\subsection{$\alpha$-potential functions and $\alpha$-NEs}
We analyze the $N$-player game
\eqref{eq:N_player_cost}
using the $\alpha$-potential game framework introduced in  \cite{guo2025alpha},
which   reduces the study of approximate NEs to the minimization of a single 
$\alpha$-potential
function.

\begin{proposition}[{\cite[Proposition 2.1]{guo2025alpha}}]
\label{prop:alpha_PG_Nplayer}
Let $\Phi:\mathcal{A}\rightarrow\mathbb{R}$  be an 
  $\alpha$-potential function of the game \eqref{eq:N_player_cost}    with some $\alpha\ge 0$, in the sense that for all $i\in [N]$, $u_i,u_i'\in\mathcal{A}_i$, and $\bu_{-i}\in\mathcal{A}_{-i}$,
\begin{equation}
\label{eq:N_player_alpha_PG}
\vert (J_i(u_i',\bu_{-i})-J_i(u_i,\bu_{-i})) - \left(\Phi(u_i',\bu_{-i})-\Phi(u_i,\bu_{-i}) \right) \vert \leq \alpha.
\end{equation}
If 
$\overline{\bu}\in\mathcal{A}$ satisfies $\Phi(\overline{\bu})\leq \inf_{\bu\in\mathcal{A}}\Phi(\bu)+\varepsilon$ for some $\varepsilon\ge 0$, then $\overline{\bu}$ is an $(\alpha+\varepsilon)$-NE of   the game \eqref{eq:N_player_cost}. 
\end{proposition}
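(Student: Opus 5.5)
The argument is a direct unilateral-deviation estimate; no results from later in the paper are needed. Fix $\overline{\bu}\in\mathcal{A}$ with $\Phi(\overline{\bu})\le \inf_{\bu\in\mathcal{A}}\Phi(\bu)+\varepsilon$, a player $i\in[N]$, and an arbitrary deviation $u_i'\in\mathcal{A}_i$. We must show $J_i(\overline{\bu})\le J_i(u_i',\overline{\bu}_{-i})+\alpha+\varepsilon$.

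First, apply the $\alpha$-potential inequality \eqref{eq:N_player_alpha_PG} with $u_i=\overline{u}_i$ and $\bu_{-i}=\overline{\bu}_{-i}$, keeping only one side of the absolute value. This gives
\[
J_i(\overline{\bu})-J_i(u_i',\overline{\bu}_{-i})\le \Phi(\overline{\bu})-\Phi(u_i',\overline{\bu}_{-i})+\alpha .
\]
Second, the profile $(u_i',\overline{\bu}_{-i})$ lies in $\mathcal{A}$ because $\mathcal{A}=\prod_j\mathcal{A}_j$ is a product set. Hence $\Phi(u_i',\overline{\bu}_{-i})\ge\inf_{\mathcal{A}}\Phi\ge \Phi(\overline{\bu})-\varepsilon$, that is,
\[
\Phi(\overline{\bu})-\Phi(u_i',\overline{\bu}_{-i})\le\varepsilon .
\]
Combining the two displays yields $J_i(\overline{\bu})\le J_i(u_i',\overline{\bu}_{-i})+\alpha+\varepsilon$. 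Since $i$ and $u_i'$ were arbitrary, $\overline{\bu}$ is an $(\alpha+\varepsilon)$-NE by definition.

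There is no genuine obstacle here. The only points to check are that the infimum is finite, which is implicit in the hypothesis that an $\varepsilon$-minimizer $\overline{\bu}$ exists, and that a unilateral deviation from an admissible profile remains admissible, which the product structure of $\mathcal{A}$ guarantees.
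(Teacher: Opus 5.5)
Your proof is correct: it uses one side of the $\alpha$-potential inequality together with the fact that a unilateral deviation $(u_i',\overline{\bu}_{-i})$ stays in the product set $\mathcal{A}$ and so cannot undercut $\inf_{\mathcal{A}}\Phi$. This is the standard argument behind the result. The paper gives no proof of its own and simply cites \cite{guo2025alpha} for it.
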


A crucial step in applying the $\alpha$-potential game framework is to identify some
$\alpha$-potential function and quantify its associated $\alpha$. For general stochastic differential games, 
\cite[Theorem 3.1]{guo2025alpha}  constructs one   $\alpha$-potential function by aggregating the derivatives of all players' objective functions  over the control space, which are expressed in terms of the derivatives of the state processes with respect to the controls (referred to as the sensitivity processes therein). The resulting 
$\alpha$-potential function is then minimized by reformulating the problem as a McKean-Vlasov control problem, analyzed via an infinite-dimensional Hamilton-Jacobi-Bellman equation on the Wasserstein space.

Here
we present a new construction approach. The $\alpha$-potential function for the game \eqref{eq:N_player_cost} is derived by leveraging the decoupled structure of the state processes, yielding a
  finite-dimensional control problem.

\begin{theorem}\label{thm:alpha2}
Suppose Assumptions \ref{assumption:SDE} and \ref{assumption:cost} hold. Define $\Phi:\mathcal{A}\rightarrow\mathbb{R}$ by
\begin{equation}\label{eq:alpha_potential}
\Phi(\bu) = \mathbb{E}\left[\int_0^T F(t,\bX_t^{\bu},\bu_t) dt + G(\bX_T^{\bu})\right],
\end{equation}
where for all $t\in[0,T]$, $x\in \mathbb{R}^{Nn}$, and $a\in\mathbb{R}^{Nk}$, 
$F:[0,T]\times\mathbb{R}^{Nn}\times\mathbb{R}^{Nk}\rightarrow\mathbb{R}$ and $G:\mathbb{R}^{Nn}\rightarrow\mathbb{R}$
satisfy
 
\begin{equation}
\label{eq:F_G_bar}
\begin{aligned}
F(t,x,a) = &\int_0^1\sum_{i\in[N]} \begin{pmatrix}\partial_{x_i} f_i  \\ \partial_{a_i} f_i \end{pmatrix}^{\top}\left(t,rx ,ra\right) \begin{pmatrix}  x_i \\a_i \end{pmatrix} dr,\quad G(x) =  \int_0^1 \sum_{i\in[N]}(\partial_{x_i} g_i)^{\top}(rx)  x_i  dr.
\end{aligned}
\end{equation}
Then $\Phi$ is an $\alpha_N$-potential function of the game \eqref{eq:N_player_cost},  with
\begin{align*}
\alpha_N \leq & \frac{1}{2}\sup_{i\in[N]}\sup_{u_i'\in\cA_i,\bu\in\cA}\sum_{j\in[N]\backslash\{i\}} \left\{    \Vert \partial_{(x_i,a_i)(x_j,a_j)}^2\Delta_{i,j}^f\Vert_{\infty}  \left\| \begin{pmatrix} X_i^{u_i'}-X_i^{u_i} \\   u_i'-u_i
\end{pmatrix}
\right\|_{\mathcal{H}^2(\mathbb{R}^{n+k})}
 \left\| \begin{pmatrix} X_j^{u_j} \\   u_j
\end{pmatrix}
\right\|_{\mathcal{H}^2(\mathbb{R}^{n+k})} \right.\\
&\left.+ \Vert \partial_{x_ix_j}^2\Delta_{i,j}^g \Vert_{\infty} \Vert 
X_{T,i}^{u_i'}-X_{T,i}^{u_i}\Vert_{L^2} \Vert X_{T,j}^{u_j} \Vert_{L^2}\right\},
\end{align*}
with $\Delta_{i,j}^f \coloneqq f_i-f_j$ and $\Delta_{i,j}^g \coloneqq g_i-g_j$,  for all $i,j\in[N]$.
\end{theorem}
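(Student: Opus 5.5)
Fix $i\in[N]$, $u_i,u_i'\in\cA_i$ and $\bu_{-i}\in\cA_{-i}$. The key structural fact is that the state dynamics \eqref{eq:state} are decoupled: $X_j^{u_j}$ depends only on $u_j$. Hence switching $u_i$ to $u_i'$ changes only the $i$th component of $(\bX^{\bu},\bu)$. Write $z=(x,a)\in\RR^{N(n+k)}$ with blocks $z_j=(x_j,a_j)$. The plan is to view $F$ as a path integral of the vector field $V(t,z)=(\partial_{z_j}f_j(t,z))_{j\in[N]}$ along the ray $r\mapsto rz$. Then I compare $F(t,z')-F(t,z)$, for $z,z'$ differing only in block $i$, with $f_i(t,z')-f_i(t,z)$. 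If the Jacobian of $V$ were symmetric, the Poincaré lemma would give $F=f_i+$ (terms independent of $z_i$). The defect is measured exactly by the off-diagonal blocks $\partial^2_{z_iz_j}f_j-\partial^2_{z_jz_i}f_i=-\partial^2_{z_iz_j}\Delta^f_{i,j}$.

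\textbf{Step 1 (derivative of $F$ in $z_i$).} Differentiate under the integral, which is allowed by Assumption \ref{assumption:cost}:
\begin{equation*}
\partial_{z_i}F(t,z)=\int_0^1\Big[\partial_{z_i}f_i(t,rz)+\sum_{j}r\,\partial^2_{z_iz_j}f_j(t,rz)\,z_j\Big]dr .
\end{equation*}
Write $\partial^2_{z_iz_j}f_j=\partial^2_{z_iz_j}f_i-\partial^2_{z_iz_j}\Delta^f_{i,j}$ for $j\neq i$. The $f_i$ part recombines as $\int_0^1\frac{d}{dr}\big(r\,\partial_{z_i}f_i(t,rz)\big)dr=\partial_{z_i}f_i(t,z)$. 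This gives
\begin{equation*}
\partial_{z_i}F(t,z)-\partial_{z_i}f_i(t,z)=-\sum_{j\neq i}\int_0^1 r\,\partial^2_{z_iz_j}\Delta^f_{i,j}(t,rz)\,z_j\,dr .
\end{equation*}
The same identity holds for $G$ and $g_i$.

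\textbf{Step 2 (integrate along the $i$th block).} Let $z^s=z+s(z'-z)$, which changes only block $i$. Then $(F-f_i)(t,z')-(F-f_i)(t,z)=\int_0^1(\partial_{z_i}F-\partial_{z_i}f_i)(t,z^s)(z'_i-z_i)\,ds$. Bounding the operator norm by $\|\partial^2_{z_iz_j}\Delta^f_{i,j}\|_\infty$ and using $\int_0^1 r\,dr=\tfrac12$ gives
\begin{equation*}
\big|(F-f_i)(t,z')-(F-f_i)(t,z)\big|\le \tfrac12\sum_{j\neq i}\|\partial^2_{z_iz_j}\Delta^f_{i,j}\|_\infty\,|z_i'-z_i|\,|z_j| .
\end{equation*}
Here $z_j$ for $j\ne i$ is unchanged along the path, so the factor $|z_j|$ is exact.

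\textbf{Step 3 (plug in processes).} Take $z=(\bX_t^{\bu},\bu_t)$ and $z'=(\bX_t^{(u_i',\bu_{-i})},(u_i',\bu_{-i})_t)$. By decoupling, $z_j'=z_j=(X^{u_j}_{t,j},u_{t,j})$ for $j\neq i$. Integrate in $t$ and take expectations, applying Cauchy–Schwarz in $\Omega\times[0,T]$ to get the $\mathcal H^2$ product; do the same for the terminal term in $L^2$. Combining with the definitions of $J_i$ and $\Phi$ yields \eqref{eq:N_player_alpha_PG} with the stated bound, after taking the supremum over $i,u_i',\bu$.

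\textbf{Main obstacle.} The only delicate point is Step 1: the algebraic recombination of the diagonal $j=i$ term and the $f_i$ part of the off-diagonal terms into a total $r$-derivative. Justifying differentiation under $\int_0^1$ and finiteness of all expectations is routine: the quadratic growth of $f_i,g_i$ and the linear growth of their gradients follow from the bounded Hessians, and $X\in\mathcal S^2$, $u\in\mathcal H^2$. If the mixed-block bookkeeping gets messy, I would first verify the identity for the quadratic case $f_j(z)=\tfrac12 z^\top Q^jz$ and then pass to the general case.
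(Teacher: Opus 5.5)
Your proof is correct and follows essentially the same route as the paper. Both arguments use the decoupling so that only block $i$ changes, split $f_j=f_i-\Delta^f_{i,j}$, reach the identical representation $-\int_0^1\!\int_0^1\sum_{j\neq i} r\,(z_i'-z_i)^\top\partial^2_{z_iz_j}\Delta^f_{i,j}(t,\cdot)\,z_j$, and finish with Cauchy--Schwarz. The only difference is the order of operations: the paper applies the fundamental theorem of calculus along the ray to $F$ itself, getting $F=f_i-f_i(t,0)-\int_0^1\sum_{j\neq i}z_j^\top\partial_{z_j}\Delta^f_{i,j}(t,rz)\,dr$, and then differences in block $i$; this sidesteps your Step 1 differentiation under the integral and the total-derivative recombination.
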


The $\alpha$-potential function \eqref{eq:alpha_potential} yields    an $Nn$-dimensional  control problem, whose state process $\bX^{\bu}$ evolves according to the   joint state dynamics 
\eqref{eq:state}.
It is constructed via a path integral in the \textit{state–control product space}, which differs from the existing constructions in
 \cite{guo2025alpha,guo2025distributed}
that use a path integral in the control space only. 
This construction exploits the decoupled structure of the players’ state dynamics, 
which removes the (nonlinear) dependence of player $i$'s   state on other players’ control perturbations.
Moreover,
by working on the enlarged state–control space, we avoid differentiating each player’s state dynamics with respect to their own control and can accommodate non-differentiable state coefficients. 
 Similar constructions have been used  in \cite{guo2025towards, di2025alpha} for differential  games with closed-loop controls.

The following corollary provides a more explicit upper bound of $\alpha$. 
Its proof follows from standard moment estimates of SDEs (see e.g., \cite[Theorem 3.4.3]{zhang2017backward}), and    is omitted.

\begin{corollary}
\label{cor:alpha_upper_bound}
    
Suppose Assumptions \ref{assumption:SDE} and \ref{assumption:cost} hold.  
Let 
$U=\max_{i\in [N]}\sup_{u_i\in\mathcal{A}_i}\Vert u_i\Vert_{\mathcal{H}^2(\mathbb{R}^k)}$
 and $C_{\xi, U}= 1+U+\max_{i\in [N]}\|\xi_i\|_{L^2}$. 
There exists a constant $C\ge 0$, depending only on $T$ and $(b_i,\sigma_i,\gamma_i)_{i\in [N]}$, such that 
\begin{equation*}
\begin{aligned}
\alpha_N \leq  & C\sup_{i\in[N]}\sum_{j\in[N]\backslash\{i\}} \left(    \Vert \partial_{(x_i,a_i)(x_j,a_j)}^2\Delta_{i,j}^f\Vert_{\infty}  + \Vert \partial^2_{x_ix_j}\Delta^g_{i,j} \Vert_{\infty} \right)UC_{\xi, U}.
\end{aligned}
\end{equation*}
\end{corollary}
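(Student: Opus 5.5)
The plan is to combine the bound of Theorem~\ref{thm:alpha2} with uniform $\mathcal{H}^2$ and $L^2$ moment estimates for the decoupled state equations \eqref{eq:state}. Each term in the sum of Theorem~\ref{thm:alpha2} is a product of a Hessian norm, a ``difference'' norm involving $(X_i^{u_i'}-X_i^{u_i}, u_i'-u_i)$, and a ``level'' norm involving $(X_j^{u_j},u_j)$. It therefore suffices to show two uniform bounds, with $C$ depending only on $T$ and the Lipschitz data $L$ of Assumption~\ref{assumption:SDE}. The difference norms (both the $\mathcal{H}^2$ one and $\Vert X_{T,i}^{u_i'}-X_{T,i}^{u_i}\Vert_{L^2}$) should be bounded by $C U$. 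The level norms (the $\mathcal{H}^2$ one and $\Vert X_{T,j}^{u_j}\Vert_{L^2}$) should be bounded by $C C_{\xi,U}$. Multiplying, absorbing the factor $\tfrac12$ into $C$, and taking suprema then gives the claimed estimate.

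For the level bound, fix $j$ and $u_j\in\cA_j$. By Assumption~\ref{assumption:SDE}, $|\phi(t,x,a)|\le L(1+|x|+|a|)$ for $\phi\in\{b_j,\sigma_j,\gamma_j\}$. The standard estimate (Itô/BDG plus Gronwall, as in \cite[Theorem 3.4.3]{zhang2017backward}) gives
\[
\Vert X_j^{u_j}\Vert_{\mathcal{S}^2(\RR^n)}\le C\bigl(\Vert\xi_j\Vert_{L^2}+1+\Vert u_j\Vert_{\mathcal{H}^2(\RR^k)}\bigr)\le C C_{\xi,U}.
\]
Since $\Vert\cdot\Vert_{\mathcal{H}^2}\le \sqrt{T}\Vert\cdot\Vert_{\mathcal{S}^2}$ and $\Vert X_{T,j}\Vert_{L^2}\le\Vert X_j\Vert_{\mathcal{S}^2}$, and $\Vert u_j\Vert_{\mathcal{H}^2}\le U\le C_{\xi,U}$, this controls both level norms.

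For the difference bound, the two solutions $X_i^{u_i'}$ and $X_i^{u_i}$ share the initial condition $\xi_i$ and the same Brownian motions. Their difference satisfies an SDE whose coefficient increments are bounded by $L(|X_i^{u_i'}-X_i^{u_i}|+|u_i'-u_i|)$. The stability estimate for SDEs then gives
\[
\Vert X_i^{u_i'}-X_i^{u_i}\Vert_{\mathcal{S}^2}\le C\Vert u_i'-u_i\Vert_{\mathcal{H}^2}\le 2CU.
\]
This controls the $\mathcal{H}^2$ difference norm, the control part $\Vert u_i'-u_i\Vert_{\mathcal{H}^2}\le 2U$, and the terminal difference.

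There is no real obstacle. The only point needing care is that the constants must be uniform in $N$, $i$, $j$ and in the controls. They are: each SDE is one-dimensional in the player index, since the dynamics are decoupled, and the Lipschitz constant $L$ is common to all players. If $U=\infty$ the bound is trivial, so we may assume $U<\infty$.
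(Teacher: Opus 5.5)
Your proposal is correct and follows the route the paper indicates (the paper omits the proof, citing standard SDE moment estimates): you bound the difference norms in Theorem~\ref{thm:alpha2} by $CU$ via the Lipschitz stability estimate, and the level norms by $C\,C_{\xi,U}$ via the linear-growth moment estimate, with $C$ depending only on $T$ and $L$. Your remarks on uniformity in $N$, $i$, $j$ and the controls, and on the trivial case $U=\infty$, are also sound.
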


\section{Propagation of Chaos for Normalized $\alpha$-Potential Functions}\label{sec:convergence}

In this section, we show that as  $N\to \infty$,  
the optimal values and the minimizers of
$N$-player game $\alpha$-potential function from  Theorem \ref{thm:alpha2},
 normalized by the number of players $N$, 
   converges to those of a mean-field control (MFC) problem with measure-valued controls.

Let us start by considering a sequence of $N$-player games   defined as  in Section \ref{sec:N_player},
where the coefficients and cost functions satisfy the  following homogeneity and weak‑interaction assumptions.
 
 \begin{assumption}\label{assumption:mean_field}
\begin{enumerate}[label=(\roman*)] 
\item 
 $A_i=A$ for all $i\in [N]$.
There exists $b:[0,T]\times\mathbb{R}^n\times \mathbb{R}^k\rightarrow\mathbb{R}^n$, $\sigma:[0,T]\times\mathbb{R}^n\times \mathbb{R}^k\rightarrow\mathbb{R}^{n\times d}$ 
and $\gamma\in \mathbb{R}^{n\times l}$, 
such that for all $i\in [N]$, 
 $(b_i,\sigma_i,\gamma_i)=(b,\sigma, \gamma) $ and satisfy   Assumption \ref{assumption:SDE}. 

\item
\label{item:f_g_i}
There exists $f\in\cC^{0,2,2}([0,T]\times\RR^{n+k}\times\cP_2(\RR^{n+k}))$ and $g\in\cC^{2,2}(\RR^n\times\cP_2(\RR^n))$ such that  for all  $i\in[N]$, $t\in[0,T]$, $x=(x_j)_{j\in[N]}\in \mathbb{R}^{Nn}$, and $a=(a_j)_{j\in[N]} \in \mathbb{R}^{Nk}$,
\begin{equation*}
f_i(t,x,a) = f\left(t,x_i,a_i, \frac{1}{N}\sum_{j\in[N]}\delta_{(x_j,a_j)} \right),\quad g_i(x) = g\left(x_i, \frac{1}{N}\sum_{j\in[N]}\delta_{x_j}\right).
\end{equation*}
\item
\label{item:non-degenrate}
 $A$ is a compact set. The functions $b$ and $\sigma$ are bounded, $n=d$, and there exists a constant $\theta>0$ such that for all $t\in[0,T]$, $x\in\mathbb{R}^n$, $a\in A$, and $\eta\in \RR^n$, $\eta^\top \sigma\sigma^{\top}(t,x,a)\eta \geq \theta |\eta|^2$.
\end{enumerate}
\end{assumption}
\begin{remark}

Since the running cost    depends nonlinearly on the joint law of the state and control processes, we impose   the non-degeneracy of the idiosyncratic noise in Assumption \ref{assumption:mean_field}\ref{item:non-degenrate}  to facilitate the convergence analysis, in line with \cite[Assumption 2.1]{djete2022extended}. 
  This non-degeneracy condition can be removed when the running cost depends only on the state law (see e.g., \cite{djete2022mckean2} or \cite{lacker2017limit}).

\end{remark}

\subsection{Normalized $N$-player game $\alpha$-potential functions}
To define the admissible controls for the sequence of $N$-player game $\alpha$-potential functions,
let  $(\Omega,\cF,\PP)$  be a probability space supporting the following mutually independent objects: 
a sequence $(\xi_i)_{i\in\NN^*}$  of independent $\mathbb{R}^n$-valued random variables with laws $(\nu_i)_{i\in\NN^*}\subset\cP_2(\RR^n)$, a sequence $(W_i)_{i\in\NN^*}$  of independent $d$-dimensional Brownian motions, and an $l$-dimensional Brownian motion $B$.
As in Section   \ref{sec:N_player},
  $\xi_i$ and $W_i$   represent the initial state and idiosyncratic noise  of player $i$'s state dynamics, 
respectively, 
and $B$ represents the common noise affecting all players' state dynamics. 
 For each $N\in\NN^*$, define  the filtration $\mathbb{F}^N= (\mathcal{F}_t^N)_{t\in [0,T]}$   such that 
 for all $t\in [0,T]$,  
 $
\mathcal{F}_t^N\coloneqq \sigma\{(\xi_i)_{i\in[N]},(W_{t\wedge\cdot,i})_{i\in[N]},B_{t\wedge\cdot} \}\vee \mathcal N,
$
where $\mathcal N$ is the collection of   $\mathbb P$-null sets. 
Since all players share the same action set $A$,
for each $N\in \NN^*$,
let    $\cA_N$ be  the set of   $\mathbb{F}^N$-progressively measurable $A$-valued processes, representing 
  the admissible controls for a single player  in the $N$-player game,
  and let  $\cA_N^N$ be  the set of  joint   control profiles  for all players.

For each $N\in\mathbb{N}^*$, let $\Phi^N:\cA_N^N\to\RR$ be  the $N$-player game $\alpha$-potential function \eqref{eq:alpha_potential},  
normalized  by ${1}/{N}$.
To write $\Phi^N$ explicitly under Assumption \ref{assumption:mean_field}, define 
$F^N:[0,T]\times\RR^n\times\RR^k\times\cP_2(\RR^n\times\RR^k)\to\RR$ and $G^N:\RR^n\times\cP_2(\RR^n)\to\RR$  such that  for all $(t,x,a,\nu,\mu)\in [0,T]\times\RR^n\times\RR^k\times\cP_2(\RR^n\times\RR^k)\times\cP_2(\RR^n)$,
\begin{align}
\label{eq:F_G_N}
\begin{split}
 &F^N(t,x,a,\nu) 
 \\
 &\coloneqq \int_0^1 \begin{pmatrix}x  \\ a \end{pmatrix}^{\top} \left\{ \begin{pmatrix}\partial_x f \\ \partial_a f \end{pmatrix}\left(t,rx,ra, \nu\circ T_r^{-1} \right) + \frac{1}{N} \partial_{\nu}f \left(t,rx,ra, \nu\circ T_r^{-1}\right)(rx,ra) \right\} dr, \\
 & G^N(x,\mu) \coloneqq  \int_0^1 x^{\top}\left\{\partial_x g\left(rx, \mu\circ (T_r')^{-1}\right) + \frac{1}{N}\partial_{\mu} g \left(rx, \mu\circ (T_r')^{-1} \right) (rx)\right\}dr,
\end{split}
\end{align}
where for all $r\in [0,1]$, $T_r:\mathbb{R}^n\times \mathbb{R}^k\rightarrow \mathbb{R}^n\times \mathbb{R}^k$ 
satisfies $T_r(x,a) \coloneqq (rx,ra)$, and   $T_r':\mathbb{R}^n\rightarrow\mathbb{R}^n$ satisfies $T_r'(x) \coloneqq rx$.
 Then a direct computation shows that 
 \begin{align}
 \label{eq:scaled_N_potential}
 \begin{split}
&\Phi^N(\bu;\nu_1,\cdots,\nu_N) \coloneqq \frac{1}{N}\Phi(\bu)
\\
&  = \frac{1}{N}\sum_{i\in[N]}\mathbb{E}\left[ \int_0^T F^N\left(t,X_{t,i}^{u_i},u_{t,i}, \frac{1}{N}\sum_{j\in[N]}\delta_{(X_{t,j}^{u_j},u_{t,j})}\right) dt + G^N\left(X_{T,i}^{u_i}, \frac{1}{N}\sum_{j\in[N]}\delta_{X_{T,j}^{u_j}}\right) \right],
\end{split}
\end{align}
where 
for any $u_i \in \mathcal A_N$,
$X_{i}^{u_i}\in \mathcal S^2(\RR^n)$  is the strong solution to the following dynamics:
\begin{equation}\label{eq:state_n}
dX_{t,i} = b(t,X_{t,i},u_{t,i})dt + \sigma(t,X_{t,i},u_{t,i}) dW_{t,i} + \gamma   dB_t,
\quad 
t\in [0,T];\quad 
\quad X_{0,i}=\xi_i.
\end{equation}
The minimum of   the normalized $N$-player game  $\alpha$-potential function  is given by 
\begin{equation}
\label{eq:N_player_alpha__value}
V^N(\nu_1,\cdots,\nu_N)\coloneqq\inf_{\bu\in\mathcal{A}_N^N} \Phi^N(\bu;\nu_1,\cdots,\nu_N).
\end{equation}
Here 
we   highlighted the dependence of $\Phi^N$ and $V^N$ on the initial state distributions $(\nu_i)_{i\in [N]}$. 
Note that 
any minimizer of $\Phi^N$ is an $\alpha_N$-NE of the $N$-player game,   
since scaling by a positive constant  does not change the set of minimizers.

\subsection{Derivation of the limiting   MFC problem}

To derive the limiting MFC problem, 
assume that the probability space $(\Omega,\cF,\PP)$ also supports an 
$\RR^n$-valued random variable $\xi$ with law $\nu\in\cP_2(\RR^n)$, 
 and a  $d$-dimensional Brownian motion $W$, independent of all other random elements. 
Define the filtration   $\mathbb{F}^{\infty}=(\mathcal{F}_t^{\infty})_{t\in [0,T]}$ such that 
$
\mathcal{F}_t^{\infty}\coloneqq 
\sigma\left\{\xi, W_{t\wedge\cdot},B_{t\wedge\cdot} \right\}\vee \mathcal N
$
for all 
$t\in [0,T]$,
and 
let   $\cA^{\infty}$ 
be the set of $\mathbb{F}^{\infty}$-progressively measurable $A$-valued processes.
Formally passing $N\to \infty$ in \eqref{eq:scaled_N_potential} yields the following control objective:
for all $u\in \mathcal A^\infty$, 
\begin{equation}\label{eq:mf_potential_strict}
\begin{aligned}
\Phi^{\infty}(u;\nu) =& \mathbb{E}\left[\int_0^T F^{\infty}\left( t,X_t^u,u_t,\mathcal{L}( X_t^u, u_t \vert\mathcal{G}_t)\right) dt +   G^{\infty}(X_T^u,\mathcal{L}(X_T^u \vert\mathcal{G}_T)) \right],
\end{aligned}
\end{equation}
where  $\mathcal{G}_t\coloneqq \sigma\{B_{s}\mid s\in [0,t]\}\vee \mathcal N$, 
 $X_t^u \in \mathcal S^2(\RR^n)$ is the strong solution to 
\begin{equation}\label{eq:SDE_meanfield}
dX_t = b(t,X_t,u_t)dt + \sigma(t,X_t,u_t)dW_t + \gamma dB_t,\quad t\in [0,T]; \quad X_0=\xi,
\end{equation}
and 
$F^{\infty}:[0,T]\times \mathbb{R}^n\times \mathbb{R}^k\times \mathcal{P}_2(\mathbb{R}^n\times \mathbb{R}^k )\rightarrow \mathbb{R}$ and $G^{\infty}:\mathbb{R}^n\times \mathcal{P}_2(\mathbb{R}^n)\rightarrow\mathbb{R}$ are defined by: for all $(t,x,a,\nu,\mu)\in [0,T]\times\RR^n\times\RR^k\times\cP_2(\RR^n\times\RR^k)\times\cP_2(\RR^n)$,
\begin{equation}\label{eq:mf_potential_cost}
\begin{aligned}
F^{\infty}(t,x,a,\nu) &  \coloneqq \int_0^1 \begin{pmatrix} x \\ a \end{pmatrix}^{\top}\begin{pmatrix} \partial_x f \\ \partial_a f \end{pmatrix}\left(t,rx,ra, \nu\circ T_r^{-1} \right) dr, \\
G^{\infty}(x,\mu) &\coloneqq \int_0^1 x^{\top}\partial_x g(rx,\mu\circ (T_r')^{-1})dr.
\end{aligned}
\end{equation}
where $T_r$ and $T_r'$ are the same maps as in \eqref{eq:F_G_N}.
The optimal value  of $\Phi^\infty(\cdot;\nu)$ is given by 
\begin{equation}
\label{eq:mf_strict}
V^{\infty}(\nu)\coloneqq \inf_{u\in\mathcal{A}^\infty}\Phi^{\infty}(u;\nu).
\end{equation}

We emphasize that, as  shown in  Examples \ref{eg:common_noise} and \ref{eg:no_common_noise}, 
the limits of approximate minimizers of the $N$-player game $\alpha$-potential functions $\Phi^N$ may fail to belong to the set $\cA^\infty$ of strict controls,
due to 
the nonlinear dependence of the cost function $f$ on the control law.

To deal with this issue, we    enlarge the admissible control set to measure-valued controls, following  \cite{djete2022extended}. We   introduce the set of admissible measure-valued controls. 
 For each Polish space  $(E,\rho)$,  let $\mathbb{M}(E)$ denote the space of all Borel measures $q(dt,de)$ on $[0,T]\times E$ whose marginal distribution on $[0,T]$ is the Lebesgue measure $dt$,  i.e.,   $q(dt,de)=q(t,de)dt$ for a family $(q(t,de))_{t\in [0,T]}$ of Borel probability measures on $E$. Denote by  $\Lambda$   the canonical element on $\mathbb{M}(E)$. Fix an arbitrary $e_0\in E$, and  define
\begin{equation*}
\Lambda^t(ds,de)\coloneqq \Lambda(ds,de)\vert_{[0,t]\times E} + \delta_{e_0}(de)ds\vert_{(t,T]\times E}, \quad \forall t\in [0,T].
\end{equation*}
Consider the canonical space $\Tilde{\Omega}\coloneqq C([0,T];\cP(\RR^n)) \times \MM(\cP(\RR^n\times A)) \times C([0,T];\RR^l)$. Let $(\Tilde{\mu},\Tilde{\Lambda},\Tilde{B})$ be     the canonical element of $\Tilde{\Omega}$, and define the canonical filtration $\Tilde{\mathbb{F}}=(\Tilde{\mathcal{F}}_t)_{t\in [0,T]}$ such that  
$
\Tilde{\mathcal{F}}_t\coloneqq \sigma\left\{\Tilde{\mu}_{t\wedge\cdot},\Tilde{\Lambda}^t,\Tilde{B}_{t\wedge\cdot} \right\}
$
for all $t\in[0,T]$.
By \cite[Lemma 3.2]{lacker2015mean}, there exists a disintegration $\Tilde{\Lambda}(dm,dt)=\Tilde{\Lambda}_t(dm)dt$ with $(\Tilde{\Lambda}_t)_{t\in [0,T]}$ being a $\cP(\cP(\RR^n\times A))$-valued $\Tilde{\mathbb{F}}$-predictable process. Here $\Tilde{\mu}$ represents the conditional state laws given the common noise, $\Tilde{\Lambda}$ represents 
the law of the conditional joint state–control law,
and   $\Tilde{B}$ represents the common noise.

The admissible measure-valued controls are suitable probability   measures on $\Tilde{\Omega}$.
Specifically,    define the generator $\Tilde{\mathcal{L}}$ such that  for all $(t,x,a)\in [0,T]\times\mathbb{R}^n\times A$ and   $\varphi\in C^2(\mathbb{R}^n)$,
\begin{equation*}
\Tilde{\mathcal{L}}_t \varphi(x,a)\coloneqq b(t,x,a)^{\top} \nabla\varphi(x) + \frac{1}{2}\Tr\left[\sigma\sigma^{\top}(t,x,a)\nabla^2\varphi(x) \right],
\end{equation*}
and   for all $f\in C_b^2(\mathbb{R}^n)$, $\Tilde{\mu}\in C([0,T];\cP(\RR^n))$, and $\Tilde{\Lambda}\in\MM(\cP(\RR^n\times A))$, define the process $(N_t(f,\Tilde{\mu},\Tilde{\Lambda}))_{t\in [0,T]}$ on $\Tilde{\Omega}$ by  
\begin{align*}
&N_t(f,\Tilde{\mu},\Tilde{\Lambda})
\\
&\coloneqq \langle f(\cdot-\gamma \Tilde{B}_t),\Tilde{\mu}_t\rangle - \langle f,\Tilde{\mu}_0\rangle - \int_0^t\int_{\cP(\RR^n\times A)}\int_{\mathbb{R}^n\times A} \Tilde{\mathcal{L}}_r[f(\cdot-\gamma \Tilde{B}_r)](x,a)m(dx,da)\Tilde{\Lambda}_r(dm)dr,
\end{align*}
where $\langle h,\mu\rangle =\int_{\RR^n} h(x)\mu (dx)$
for any  $\mu\in \cP(\RR^n)$ and  bounded measurable function $h:\RR^n\to\RR$.
For each $\pi\in\mathcal{P}(\mathbb{R}^n)$, let  $\mathbb{Z}_{\pi} $ be the set of probability measures
on $\mathbb{R}^n\times A$ whose   $\mathbb{R}^n$-marginal is  $\pi$, i.e.,
$
\mathbb{Z}_{\pi}\coloneqq \left\{ m\in \cP(\RR^n\times A)\mid m(dx,A)=\pi(dx) \right\}.
$
We now give the precise   definition of the admissible measure-valued controls, following \cite[Definition 2.6]{djete2022extended}.

\begin{definition}\label{def:measure_control}

Given an initial distribution  $\nu\in\mathcal{P}_2(\mathbb{R}^n)$, 
$\Tilde{\mathbb{P}}\in\mathcal{P}_2(\Tilde{\Omega})$ is an admissible  measure-valued control 
for  $\nu$ if:
\begin{enumerate}[label=(\roman*)]
\item
\label{item:initial_state}
 $\Tilde{\mathbb{P}}(\Tilde{\mu}_0=\nu)=1$.
\item
\label{item:state_martingale}
 $(\Tilde{B}_t)_{t\in[0,T]}$ is a $(\Tilde{\mathbb{P}},\Tilde{\mathbb{F}})$-Wiener process starting at zero, and for $\Tilde{\mathbb{P}}$-almost every $\omega\in\Tilde{\Omega}$, $N_t(f,\Tilde{\mu},\Tilde{\Lambda})=0$ for all $f\in C_b^2(\mathbb{R}^n)$ and all  $t\in [0,T]$.
\item
\label{item:state_marginal} 
 For $d\Tilde{\mathbb{P}}\otimes dt$ almost every $(t,\omega)\in [0,T]\times\Tilde{\Omega}$, $\Tilde{\Lambda}_t(\mathbb{Z}_{\Tilde{\mu}_t})=1$.
\end{enumerate}
We denote by $\cP_V(\nu)$ the set of all admissible measure-valued controls for  $\nu$.
\end{definition}

Condition \ref{item:initial_state}  in Definition \ref{def:measure_control}  
requires  that 
the flow of {(conditional)} state laws $\Tilde{\mu}$ at $t=0$ agrees with the given initial condition    $\nu$.
Condition \ref{item:state_martingale} requires that 
under $\Tilde{\mathbb{P}}$, 
the controlled state process solves a conditional martingale problem, which is a weak formulation of the original controlled state dynamics \eqref{eq:SDE_meanfield}. 
Condition \ref{item:state_marginal}  requires that  under $\Tilde{\mathbb{P}}$, 
the measure $\Tilde{\Lambda}_t$ concentrates on the set $\mathbb{Z}_{\Tilde{\mu}_t}$ for each $t\in [0,T]$, 
implying that 
the first marginal of the conditional  state-control  distribution coincides with $\Tilde{\mu}_t$. 

We now introduce the   control problem associated with   $\cP_V(\nu)$.
Define the  objective function $\Tilde{\Phi}:\mathcal{P}_2(\Tilde{\Omega})\rightarrow\mathbb{R}$ such that 
 for all $\Tilde{\mathbb{P}}\in\mathcal{P}_2(\Tilde{\Omega})$,
\begin{align}\label{eq:mf_potential_measure}
\begin{split}
\Tilde{\Phi}(\Tilde{\mathbb{P}})&\coloneqq \mathbb{E}^{\Tilde{\mathbb{P}}}\bigg[ \int_0^T \int_{\cP(\RR^n\times A)}\int_{\mathbb{R}^n\times A} F^{\infty}(t,x,a,m)m(dx,da)\Tilde{\Lambda}_t(dm)dt \\
&\quad + \int_{\mathbb{R}^n} G^{\infty}(x,\Tilde{\mu}_T)\Tilde{\mu}_T(dx)  \bigg],
\end{split}
\end{align}
where $F^\infty$ and $G^\infty$ are defined in \eqref{eq:mf_potential_cost}.
Consider the following minimization problem 
\begin{equation}
\label{eq:mf_lifted}
V_V(\nu)\coloneqq \inf_{\Tilde{\mathbb{P}}\in\cP_V(\nu)}\Tilde{\Phi}(\Tilde{\mathbb{P}}),
\end{equation}
and define the set   $\cP_V^*(\nu)$ of optimal controls by 
\begin{equation*}
\cP_V^*(\nu)\coloneqq \{\Tilde{\mathbb{P}}\in \cP_V(\nu) \mid \Tilde{\Phi}(\Tilde{\mathbb{P}})=V_V(\nu) \}.
\end{equation*}

Comparing $\Tilde{\Phi}$ in \eqref{eq:mf_potential_measure}
with
$\Phi^{\infty}$ in \eqref{eq:mf_potential_strict},
lifting the control variable 
to $\Tilde{\mathbb{P}}\in\mathcal{P}_2(\Tilde{\Omega})$
makes the objective $\Tilde{\Phi}$   linear  in the control argument and eliminates the   nonlinear  dependence on the state-control law present in   $\Phi^{\infty}$. 
This lifting is crucial for convergence analysis: the flow of control laws is typically discontinuous in time, which renders
$\Phi^{\infty}$ discontinuous in the control-law variable. In contrast, the lifted  objective $\Tilde{\mathbb{P}}\mapsto \Tilde{\Phi}(\Tilde{\mathbb{P}})$ is continuous due to the linearity.

\subsection{Convergence of  $N$-player $\alpha$-potential functions}

We now state the main result of this section, which shows that both the optimal value and (approximate) minimizers of the normalized $\alpha$-potential function \eqref{eq:N_player_alpha__value} 
converge to the optimal value and minimizers of the limiting MFC problem \eqref{eq:mf_lifted}. 

To this end,
 we rewrite   the $N$-player's control problem \eqref{eq:N_player_alpha__value} 
 into the weak formulation on    $\Tilde{\Omega}$.
 Specifically, 
 for each   $\bu^N=(u_i^N)_{i\in [N]}\in\mathcal{A}_N^N$, 
 let  $(X_i^{u_i^N})_{i\in[N]}$ be the associated state processes, 
 and define for all $t\in [0,T]$, 
 \begin{equation*}
\varphi_t^{N,X}\coloneqq \frac{1}{N}\sum_{i\in [N]} \delta_{X_{t,i}^{u_i^N}},\quad \varphi_t^N \coloneqq \frac{1}{N}\sum_{i\in [N]} \delta_{(X_{t,i}^{u_i^N},u_{t,i}^N)}.
\end{equation*}
It is easy to see  that 
\begin{equation}
\label{eq:P_u_N}
\Tilde{\mathbb{P}}^N(\bu^N)\coloneqq \mathbb{P} \circ\left( (\varphi_t^{N,X})_{t\in [0,T]}, \delta_{\varphi_t^N}(dm)dt, B  \right)^{-1} \in \cP_2(  \Tilde{\Omega}), 
\end{equation}
 and 
the objective \eqref{eq:scaled_N_potential} can be   expressed using $\Tilde{\mathbb{P}}^N(\bu^N)$.

\begin{theorem}\label{thm:mean_field_limit2}
Suppose Assumption \ref{assumption:mean_field} holds and there exists $p>2$ such that  $(\nu_i)_{i\in\mathbb{N}^*}\subset \mathcal{P}_p(\mathbb{R}^n)$ and  $\sup_{N\geq 1}\frac{1}{N}\sum_{i=1}^N \int_{\mathbb{R}^n}\vert x'\vert^p \nu_i(dx')<\infty$. 
\begin{enumerate}[label=(\roman*)]
\item \label{item:converge_minimizer}
Let $(\varepsilon_N)_{N\in\mathbb{N}^*}\subset  (0,\infty)$ satisfy $\lim_{N\rightarrow\infty}\varepsilon_N=0$, 
and for each $N\in \NN^*$, let $\bu^N \in \cA_N^N$ satisfy 
$
\Phi^N(\bu^N;\nu_1,\cdots,\nu_N)\leq V^N(\nu_1,\cdots,\nu_N)+\varepsilon_N,
$
and define  $\Tilde{\mathbb{P}}^N\coloneqq \Tilde{\mathbb{P}}^N(\bu^N)$ as in \eqref{eq:P_u_N}. 
Then $(\Tilde{\mathbb{P}}^N)_{N\in\mathbb{N}^*}\subset \mathcal P_2(\Tilde{\Omega})$ is relatively compact, and 
 for any convergent subsequence $(\Tilde{\mathbb{P}}^{N_m})_{m\in \mathbb{N}^*}$,
 there exists $ \nu\in \mathcal{P}_p(\mathbb{R}^n)$ and $\Tilde{\mathbb{P}}^{\infty} \in {\mathbb{P}}_V^*(\nu)$ such that  
 $$\lim_{m\rightarrow\infty}\mathcal{W}_2\left( \frac{1}{N_m} \sum_{i=1}^{N_m}\nu_i,\nu\right)=0, \quad   
 \lim_{m\rightarrow\infty} \mathcal{W}_2\left(\Tilde{\mathbb{P}}^{N_m},\Tilde{\mathbb{P}}^{\infty}\right)=0.
 $$
\item\label{item:converge_value} $V^{\infty}(\nu) = V_V(\nu)$ for all $\nu\in\cP_p(\RR^n)$, and 
\begin{equation*}
\lim_{N\rightarrow\infty}\left\vert  V^N(\nu_1,...,\nu_N)- V^{\infty}\left(\frac{1}{N}\sum_{i=1}^N \nu_i \right) \right\vert = 0.
\end{equation*}
\end{enumerate}
\end{theorem}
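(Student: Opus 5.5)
Our plan is to reduce the statement to the limit theory of \cite{djete2022extended} for McKean--Vlasov control with measure-valued controls. The only new feature is that the $N$-player costs $F^N,G^N$ in \eqref{eq:F_G_N} depend on $N$ and differ from $F^\infty,G^\infty$ in \eqref{eq:mf_potential_cost}.

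\textbf{Step 1 (uniform perturbation estimate).} By \eqref{eq:F_G_N}, $F^N-F^\infty$ equals $\frac1N\int_0^1 (x,a)^\top\partial_\nu f(t,rx,ra,\nu\circ T_r^{-1})(rx,ra)\,dr$, and similarly for $G^N-G^\infty$. The growth bound (iv) in the definition of $\cC^{0,2,2}$ gives
\[
|F^N-F^\infty|(t,x,a,\nu)\le \tfrac{C}{N}\Big(1+|x|^2+|a|^2+\int |y|^2\nu(dy)\Big).
\]
Since $A$ is compact, $b,\sigma$ are bounded and $\sup_N\frac1N\sum_i\int|x|^p\nu_i(dx)<\infty$, standard moment estimates for \eqref{eq:state_n} give a bound $C'$ on $\frac1N\sum_i\EE[\sup_t|X_{t,i}|^2]$ that is uniform in $N$ and in $\bu\in\cA_N^N$. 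Hence
\[
\sup_{\bu\in\cA_N^N}\big|\Phi^N(\bu)-\Phi^N_\infty(\bu)\big|\le C/N,
\]
where $\Phi^N_\infty$ denotes \eqref{eq:scaled_N_potential} with $(F^N,G^N)$ replaced by $(F^\infty,G^\infty)$. Moreover $\Phi^N_\infty(\bu)=\Tilde\Phi(\Tilde{\PP}^N(\bu))$ by \eqref{eq:P_u_N}. So $\varepsilon_N$-minimizers of $\Phi^N$ are $(\varepsilon_N+2C/N)$-minimizers of $\Phi^N_\infty$, and $|V^N-\inf\Phi^N_\infty|\le C/N$.

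\textbf{Step 2 (regularity of the limiting cost).} Verify that $F^\infty,G^\infty$ satisfy the hypotheses of \cite{djete2022extended}: continuity in $(x,a,m)$ for $\cW_2$, and quadratic growth. Continuity follows from the continuity of $\partial_{(x,a)}f$, together with continuity of $\nu\mapsto\nu\circ T_r^{-1}$ and dominated convergence in $r$. Quadratic growth follows from (iv). Consequently $\Tilde\Phi$ is lower semicontinuous, indeed continuous, on the $\cW_{p'}$-bounded sets arising here. This uses uniform integrability of second moments, supplied by the $p>2$ moment assumption via the usual de la Vallée-Poussin argument.

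\textbf{Step 3 (invoke the limit theorems).} The remaining ingredients come from \cite{djete2022extended}, applied with costs $(F^\infty,G^\infty)$ and under the non-degeneracy in Assumption~\ref{assumption:mean_field}\ref{item:non-degenrate}.
\begin{enumerate}[label=(\alph*)]
\item Relative compactness of $\{\Tilde\PP^N(\bu^N)\}$ in $\cP_2(\Tilde\Omega)$, with every limit point lying in $\cP_V(\nu)$ for $\nu=\lim\frac1{N_m}\sum_{i\le N_m}\nu_i$. Tightness of the averaged initial laws in $\cP_2$ follows from the $p$-moment bound, which also gives $\nu\in\cP_p$.
\item The approximation property: every $\Tilde\PP\in\cP_V(\nu)$ is a limit of $\Tilde\PP^N(\bu^N)$ with convergent costs. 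Combined with (a), this yields $\lim_N\inf\Phi^N_\infty=V_V(\nu)$ along subsequences.
\item Equality $V^\infty(\nu)=V_V(\nu)$, from density of strict controls in $\cP_V(\nu)$, again using non-degeneracy.
\end{enumerate}

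\textbf{Conclusion.} For (i), take a limit point $\Tilde\PP^\infty$. Continuity of $\Tilde\Phi$ together with Step 1 gives
\[
\Tilde\Phi(\Tilde\PP^\infty)=\lim_m\Phi^{N_m}_\infty(\bu^{N_m})=\lim_m V^{N_m}=V_V(\nu),
\]
so $\Tilde\PP^\infty$ is optimal. For (ii), argue by contradiction along subsequences. Any subsequence has a further subsequence along which $\frac1N\sum_i\nu_i\to\nu$ in $\cW_2$. Then $V^N\to V_V(\nu)=V^\infty(\nu)$, and $V^\infty(\frac1N\sum_i\nu_i)\to V^\infty(\nu)$ by $\cW_2$-continuity of $V_V$, which itself follows from (a)--(b).

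\textbf{Main obstacle.} The main obstacle is Step 3 with \emph{non-i.i.d.} initial laws $\nu_i$ and heterogeneous initial data. We must check that the compactness and approximation arguments of \cite{djete2022extended} hold when only the averages $\frac1N\sum_i\nu_i$ converge, and that the value continuity in $\nu$ holds. We would first try to reduce to the i.i.d.\ case by a coupling and stability estimate, comparing the $N$-player values under $(\nu_i)$ with those under i.i.d.\ samples from $\frac1N\sum_i\nu_i$ using Lipschitz dependence on initial data. If that fails, we would redo the tightness and approximation steps directly with the averaged empirical initial measure.
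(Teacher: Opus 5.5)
Your proposal is correct and follows essentially the same route as the paper. Both arguments rest on three ingredients:
\begin{enumerate}[label=(\arabic*)]
\item a uniform $O(1/N)$ bound on $\Phi^N-\Tilde{\Phi}(\Tilde{\mathbb{P}}^N(\cdot))$, coming from the $\frac1N\partial_\nu f$ and $\frac1N\partial_\mu g$ terms;
\item the continuity and quadratic growth of $F^\infty,G^\infty$ (Lemma~\ref{lemma:cost_mean_field});
\item the limit theory of \cite{djete2022extended}, applied to the costs $(F^\infty,G^\infty)$.
\end{enumerate}

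The non-i.i.d.\ obstacle you flag is not one. The paper simply quotes \cite[Theorem 3.1, Theorem 3.3, Proposition 3.4]{djete2022extended}, which are already stated for independent, non-identically distributed initial laws under exactly the averaged $p$-moment hypothesis here. Quoting Theorem 3.3 directly for (ii) also spares you deriving the $\cW_2$-continuity of $V_V$ from your (a)--(b). That derivation is not immediate along a single fixed sequence $(\nu_i)$.
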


The proof of Theorem \ref{thm:mean_field_limit2} adapts the  arguments for propagation of chaos of MFC problems from 
\cite{djete2022extended} to the present setting. 
The main difference  from \cite{djete2022extended} is that the 
$N$-player cost functions  $F^N$ and  $G^N$  in 
\eqref{eq:N_player_alpha__value}
depends explicitly on $N$, and  do not coincide with the limiting cost functions    $F^{\infty}$ and  $G^{\infty}$. 
This is addressed by showing the discrepancies due to the cost-function mismatch are  of the order $O({1}/{N})$,
uniformly over controls, 
 and hence vanish as $N\to \infty$.

\subsection{Necessity of lifted control spaces in  the limit}\label{sec:examples}

To capture the limits of  $\varepsilon_N$-optimal controls for  $N$-player $\alpha_N$-potential functions,  
    the control space of the limiting MFC problem is lifted through a triple hierarchy of measures (Theorem \ref{thm:mean_field_limit2}):
  one level accounts for idiosyncratic noise,
  another captures the aggregating effects of temporal oscillations of the control process,
   and the last incorporates the effect of common noise or additional randomness arising in the limit. 
   This lifting is essential  when the running cost involves nonlinear interactions in the control.
 Below we present  concrete examples illustrating the necessity of such lifted control spaces, both in the presence and absence of common noise.   The proofs   are given in   Appendix \ref{sec:eg_proof}.

\begin{example}[With common noise]\label{eg:common_noise}

Let  $T=1$, $n=2$, $d=l=k=1$, $A=[0,1]$,   the state dynamics be given by: for all $t\in [0,1]$ and $i\in [N]$,
\begin{equation*}
\begin{cases}
dX_{t,i}^1=& u_{t,i}^N dt + dW_{t,i} +  dB_t,\quad X_{0,i}^1=0, \\
dX_{t,i}^2=& dB_t,\quad X_{0,i}^2=0,
\end{cases}
\end{equation*}
and the cost functions be  given by: for all $(t,x,a,\nu,\mu)\in [0,T]\times\RR^2\times A\times \cP_2(\RR^2\times A)\times\cP_2(\RR^2)$,
\begin{equation*}
f(t,x,a,\nu)=a^2-2a\left( \int_A a'\nu(\RR^2,da') \right),\quad g(x,\mu)=2\left(\int_{\RR^2} \begin{pmatrix} 1 \\ -1 \end{pmatrix}^{\top} x' \mu(dx') - \frac{1}{2}\right) \begin{pmatrix} 1 \\ -1 \end{pmatrix}^{\top} x.
\end{equation*}

For each $N\in \NN^*$ and $i\in [N]$,
define the following control 
\begin{equation*}
u_{t,i}^N=\frac{1}{2}\left(1_{t<\frac{1}{2}} + 1_{t\geq \frac{1}{2}}1_{B_{\frac{1}{2}}\geq 0} \right) + 1_{t\in\mathbb{T}_N}1_{B_{\frac{1}{2}}< 0},
\quad \textnormal{with 
 $\mathbb{T}_N\coloneqq \bigcup_{j=N}^{2N-1} \left[\frac{j}{2N},\frac{j}{2N}+\frac{1}{4N}\right)$.}
\end{equation*}
In the sequel, we omit the dependence on the fixed initial distribution.
It holds  that 
for each $N\in \NN^*$,
$\bu^N=(u^N_i)_{i\in [N]}$
satisfies 
$\Phi^N(\bu^N) \le V^N+  \frac{47}{16N}$, 
\begin{align}\label{eq:eg1_measure}
\begin{split}
\lim_{N\to\infty}\Tilde{\mathbb{P}}^N(\bu^N) 
&=
\lim_{N\to\infty}\mathbb{P}\circ\left( (\varphi_t^{N,X})_{t\in [0,T]}, \delta_{\varphi_t^N}(dm)dt, B  \right)^{-1}  
\\
&= \PP\circ\left( \left(\cL\left( \frac{t}{2}+W_t+B_t,B_t\,\bigg\vert\, B_t \right)\right)_{t\in [0,T]}, \Tilde{\Lambda},B \right)^{-1}\coloneqq \PP^{\infty}\in \mathcal P_2(\Tilde{\Omega}), 
\end{split}
\end{align}
where $\Tilde{\Lambda}(dm,dt)=\Tilde{\Lambda}_t(dm)dt$ with
\begin{align*}
\Tilde{\Lambda}_t(dm) &= \bigg\{\left(1_{t<\frac{1}{2}} + 1_{t\geq \frac{1}{2}}1_{B_{\frac{1}{2}}\geq 0} \right) \delta_{\cL((\frac{t}{2}
+W_t+B_t,B_t),\frac{1}{2}\vert B_t)} \\
&\quad + 1_{t\geq \frac{1}{2}}1_{B_{\frac{1}{2}}< 0}\left(\frac{1}{2}\delta_{\cL((\frac{t}{2}+W_t+B_t,B_t),0\vert B_t)} +\frac{1}{2}\delta_{\cL((\frac{t}{2}+W_t+B_t,B_t),1\vert B_t)}\right)  \bigg\}(dm),
\end{align*}
and 
$\lim_{N\to\infty}V^N=\lim_{N\to\infty} \Phi^N(\bu^N) = \Tilde{\Phi}(\PP^{\infty})=V^{\infty}=-\frac{1}{4}$, which confirms Theorem \ref{thm:mean_field_limit2}.

Note that $\widetilde{\Phi}$ depends on the law of the $\mathcal{P}(\mathcal{P}(\mathbb{R}^2 \times A))$-valued stochastic process $(\widetilde{\Lambda})_{t\in [0,T]}$, which is consistent with the triple hierarchy of measures in  Theorem \ref{thm:mean_field_limit2}.
The two levels of measures in $\cP(\cP(\RR^2\times A))$ separately capture the idiosyncratic noise $W$ and the aggregation effects arising from the oscillations of the controls $(u^N)_{N\in \NN^*}$ in time.
The remaining randomness of $\tilde{\Lambda}$
 is due to the realization of the common noise $B$.

To see that it is essential to consider the limit in $\cP_2(\Tilde{\Omega})$,
consider instead $\overline{\Omega}\coloneqq C([0,T];\cP(\RR^2))\times \MM(\RR^2\times A)\times C([0,T];\RR)$   as the canonical space with  element $(\mu,\overline{\Lambda},B)$, and define the corresponding objective function $\overline{\Phi}:\cP_2(\overline{\Omega})\to\RR$ by
\begin{equation*}
\overline{\Phi}(\overline{\PP})\coloneqq \EE^{\overline{\PP}}\left[\int_0^T\int_{\RR^n\times A}F^{\infty}(t,x,a,\overline{\Lambda}_t)\overline{\Lambda}_t(dx,da)dt + \int_{\RR^n}G^{\infty}(x,\mu_T)\mu_T(dx) \right],
\end{equation*}
where $F^\infty$ and $G^\infty$ are defined by  \eqref{eq:mf_potential_cost}.
It holds that 
\begin{align}\label{eq:eg1_measure_compressed}
\begin{split}
&\lim_{N\to\infty}\PP\circ\left( (\varphi_t^{N,X})_{t\in [0,1]}, \varphi_s^N(dx,da)ds,B  \right)^{-1} 
\\
&= \PP\circ\left( \left(\cL\left( \frac{t}{2}+W_t+B_t,B_t\vert B_t \right)\right)_{t\in [0,T]}, \overline{\Lambda},B \right)^{-1}\coloneqq \overline{\PP}^{\infty},
\end{split}
\end{align}
where $\overline{\Lambda}(dx, da,dt)=\overline{\Lambda}_t(dx, da)dt$ with
\begin{align*}
\begin{split}
\overline{\Lambda}_t(dx, da) &=  \cL\left(\frac{t}{2}+W_t+B_t,B_t \,\bigg\vert\, B_t\right)(dx) \bigg(\left(1_{t<\frac{1}{2}} + 1_{t\geq \frac{1}{2}}1_{B_{\frac{1}{2}}\geq 0} \right)\delta_{\frac{1}{2}}(da) 
\\
&\quad + 1_{t\geq \frac{1}{2}}1_{B_{\frac{1}{2}}< 0}\left(\frac{1}{2}\delta_0 +\frac{1}{2}\delta_1\right)(da)  \bigg).
\end{split}
\end{align*}
However, $\overline{\Phi}(\overline{\PP}^{\infty})=-\frac{3}{16} \neq \lim_{N\to\infty} \Phi^N(\bu^N)$. 

Here  $\overline{\Lambda}_t$ is a 
$\mathcal{P}(\mathbb{R}^2 \times A)$-valued random variable, 
which is the pointwise barycenter   of  $\Tilde{\Lambda}_t$
satisfying for all $\phi\in C_b(\RR^2\times A)$,  
$\int \phi d\overline{\Lambda}_t =\int_{\cP(\RR^2\times A)}\left(\int \phi dm\right)\Tilde{\Lambda}_t(dm)$.
Although the associated measures also converge   in $\cP_2( \overline{\Omega})$, the map $\overline{\Phi}$ is discontinuous because $F^{\infty}$
 depends nonlinearly on the measure argument. As a result, it fails to capture the limit of the $\varepsilon_N$-optimal controls $(\bu^N)_{N\in \NN^*}$.

\end{example}

\begin{example}[Without common noise]\label{eg:no_common_noise}
Let $T=1$, $n=d=k=1$, $\gamma=0$, $A=[0,2]$,   the state dynamics be given by: for all $t\in[0,1]$ and $i\in[N]$,
\begin{equation*}
dX_{t,i}=u_{t,i}^N dt +  dW_{t,i},\quad X_{0,i}=0,
\end{equation*}
and  $f$ and $g$ be given by: for all $(t,x,a,\nu,\mu)\in [0,T]\times\RR\times A\times \cP_2(\RR\times A)\times\cP_2(\RR)$,
\begin{equation*}
f(t,x,a,\nu)=a^2-2a\left( \int_A a'\nu(\RR,da') \right),\quad g(x,\mu)=2\left(\int_{\RR}  x' \mu(dx') - \frac{1}{2}\right)  x.
\end{equation*}

For each $N\in \NN^*$ and $i\in [N]$,
define the following control 
\begin{equation*}
u_{t,i}^N=2 \left(1_{t\in [\frac{1}{2},\frac{3}{4}]}1_{W_{\frac{1}{2},1}\geq 0}  +  1_{t\in\mathbb{T}_N}1_{W_{\frac{1}{2},1}< 0}\right),
\quad \textnormal{with $\mathbb{T}_N\coloneqq \bigcup_{j=N}^{2N-1} \left[\frac{j}{2N},\frac{j}{2N}+\frac{1}{4N}\right)$. }
\end{equation*}
It holds  that 
for each $N\in \NN^*$,
$\bu^N=(u^N_i)_{i\in [N]}$
satisfies 
$\Phi^N(\bu^N) \le V^N+  \frac{21}{4N}$, and
\begin{align}\label{eq:eg2_measure}
\begin{split}
&\lim_{N\to\infty}\PP \circ\left( (\varphi_t^{N,X})_{t\in [0,1]}, \delta_{\varphi_s^N}(dm)ds  \right)^{-1}
\\
&= \PP\circ\left( \left(\cL\left( \theta \left[(2t-1)^+ \wedge \frac{1}{2} \right] + (1-\theta)\left(t-\frac{1}{2}\right)^+ +W_t \vert \theta \right)\right)_{t\in [0,T]}, \Tilde{\Lambda} \right)^{-1}\coloneqq \PP^{\infty},
\end{split}
\end{align}
where
$\theta\sim\mathrm{Bernoulli}\left(\frac{1}{2}\right)$ 
is an independent random variable, and 
$\Tilde{\Lambda} (dm,dt)=\Tilde{\Lambda}_t(dm) d t$ with
\begin{equation*}
\Tilde{\Lambda}_t  = 1_{t<\frac{1}{2}}\delta_{\cL(W_t,0)} +1_{t\geq \frac{1}{2}}\left(\theta\delta_{\cL((2t-1) \wedge \frac{1}{2} +W_t, 2\cdot 1_{t\leq \frac{3}{4}})} + (1-\theta) \frac{ \delta_{\cL(t-\frac{1}{2}+W_t,2)} + \delta_{\cL(t-\frac{1}{2}+W_t,0)}  }{2}\right).
\end{equation*}
Moreover, $\lim_{N\to\infty}V^N=\lim_{N\to\infty} \Phi^N(\bu^N) = \Tilde{\Phi}(\PP^{\infty})=V^{\infty}=-\frac{1}{4}$, confirming Theorem \ref{thm:mean_field_limit2}.
Note that  $\tilde{\Lambda}_t$ is a $\cP(\cP(\RR\times A))$-valued  random variable,
where the randomness is given by the additional random variable $\theta$.

To see that it is essential to consider the limit in $\cP_2(\Tilde{\Omega})$,
consider instead $\overline{\Omega}\coloneqq C([0,T];\cP(\RR))\times \MM(\RR\times A)\times C([0,T];\RR)$   as the canonical space with  element $(\mu,\overline{\Lambda},B)$, and define the corresponding objective function $\overline{\Phi}:\cP_2(\overline{\Omega})\to\RR$ by
\begin{equation*}
\overline{\Phi}(\overline{\PP})\coloneqq \EE^{\overline{\PP}}\left[\int_0^T\int_{\RR^n\times A}F^{\infty}(t,x,a,\overline{\Lambda}_t)\overline{\Lambda}_t(dx,da)dt + \int_{\RR^n}G^{\infty}(x,\mu_T)\mu_T(dx) \right],
\end{equation*}
where $F^\infty$ and $G^\infty$ are defined by  \eqref{eq:mf_potential_cost}.
It holds that 
\begin{align}\label{eq:eg2_measure_compressed}
\begin{split}
&\lim_{N\to\infty}\PP\circ\left( (\varphi_t^{N,X})_{t\in [0,1]}, \varphi_s^N(dx,da)ds  \right)^{-1} 
\\
&= \PP\circ\left( \left(\cL\left( \theta \left[(2t-1)^+ \wedge \frac{1}{2} \right] + (1-\theta)\left(t-\frac{1}{2}\right)^+ +W_t \vert \theta \right)\right)_{t\in [0,T]}, \overline{\Lambda} \right)^{-1}\coloneqq \overline{\PP}^{\infty},
\end{split}
\end{align}
where $\overline{\Lambda}(dx, da,dt)=\overline{\Lambda}_t(dx, da)dt$ with
\begin{align*}
\begin{split}
\overline{\Lambda}_t(dx, da) &=  \cL\left( \theta \left[(2t-1)^+ \wedge \frac{1}{2} \right] + (1-\theta)\left(t-\frac{1}{2}\right)^+ +W_t \vert \theta \right)(dx) \bigg(\theta \delta_{2 \cdot 1_{t\in [\frac{1}{2},\frac{3}{4}]}}(da) 
\\
&\quad + (1-\theta)1_{t\geq \frac{1}{2}}\left(\frac{1}{2}\delta_0 +\frac{1}{2}\delta_2\right)(da)  \bigg).
\end{split}
\end{align*}
However, $\overline{\Phi}(\overline{\PP}^{\infty})=0 \neq \lim_{N\to\infty} \Phi^N(\bu^N)$.

\end{example}

When the cost function $f$ depends only on the state law,
it suffices to  consider a two-layer hierarchy of measures.
\begin{example}
Suppose that  the function  $f$ is of the form  
$ 
f(t,x,a,\nu) = \hat f(t,x,a,\nu^x),
$ 
with $\nu^x(d x)\coloneqq \nu(dx, A)$. In this case,   $F^{\infty}$ in \eqref{eq:mf_potential_cost} reduces to 
\begin{equation*}
F^{\infty}(t,x,a,\nu) = \int_0^1  \begin{pmatrix} x\\a\end{pmatrix}^{\top} \partial_{(x,a)} \hat f(t,rx,ra,\nu^x\circ T_r'^{-1})dr,
\end{equation*}
which along with   Item \ref{item:state_marginal} of Definition \ref{def:measure_control} implies that  $\Tilde{\Phi}$ in \eqref{eq:mf_potential_measure} reduces to  
\begin{equation*}
\Tilde\Phi(\Tilde\PP) = \EE^{\Tilde\PP}\left[\int_0^T \int_{\RR^n\times A} \int_0^1  \begin{pmatrix} x\\a\end{pmatrix}^{\top} \partial_{(x,a)} \hat f(t,rx,ra,\Tilde\mu_t \circ T_r'^{-1})dr \overline\Lambda_t(dx,da)dt + \int_{\RR^n}G^{\infty}(x,\Tilde\mu_T)\Tilde\mu_T(dx) \right],
\end{equation*}
where $\overline\Lambda_t(dx,da) \coloneqq \int_{\cP(\RR^n\times A)} m(dx,da)\Tilde\Lambda_t(dm)$,
in the sense that  for all $\phi\in C_b(\RR^n\times A)$,  
$\int \phi d\overline{\Lambda}_t =\int_{\cP(\RR^n\times A)}\left(\int \phi dm\right)\Tilde{\Lambda}_t(dm)$.
Hence,  it suffices to 
consider $\Tilde \Phi$
as a functional on 
$\mathcal P_2(\overline{\Omega})$ with $\overline\Omega\coloneqq C([0,T];\cP(\RR^n)) \times \MM(\RR^n\times A)  \times C([0,T];\RR^l)$.

\end{example}

\section{Connection with Potential Mean Field  Games}
\label{sec:consistent_mfg}

In this section, we show that under the condition $\lim_{N\to \infty}\alpha_N=0$, the limiting $N$-player $\alpha_N$-potential function, characterized by the objective \eqref{eq:mf_potential_measure} of  the  MFC problem \eqref{eq:mf_lifted}, is a potential function for an appropriate mean field game (MFG).

\subsection{Equivalent conditions for vanishing $\alpha$}

 By Theorem \ref{thm:alpha2},
the parameter $\alpha_N$   vanishes  as $N \to \infty$ if the Hessian  of    cost functions becomes asymptotically pairwise symmetric. The following theorem provides several equivalent characterizations of this condition.

\begin{theorem}\label{thm:alpha0_equiv}
Suppose Assumption \ref{assumption:mean_field}\ref{item:f_g_i} holds. The following statements are equivalent:
\begin{enumerate}[label=(\roman*)]
\item 
\label{item:alpha_N_limit}$\lim_{N\rightarrow\infty}\max_{i\in [N]}\sum_{j\in [N]}\Vert \partial^2_{(x_i,a_i)(x_j,a_j)}(f_i-f_j)\Vert_{\infty}=0, $ and $\lim_{N\rightarrow\infty}\max_{i\in [N]}\sum_{j\in [N]}\Vert \partial^2_{x_i x_j}(g_i-g_j)\Vert_{\infty}=0.
$ 
\item
\label{item:curl-free}
For a.e.~$t\in [0,T]$, and all 
$x,x'\in \RR^n$, 
$a,a'\in \RR^k$,
$\nu\in \mathcal{P}_2(\mathbb{R}^n\times\mathbb{R}^k)$, and 
$\mu\in \mathcal{P}_2(\mathbb{R}^n)$, 
\begin{align}\label{eq:curl_free}
\begin{split} \partial_{\nu}\partial_{(x,a)} f(t,x,a,\nu)(x',a')&=\left[\partial_{\nu}\partial_{(x,a)} f(t,x',a',\nu)(x,a) \right]^{\top},\\
  \partial_{\mu}\partial_x g(x,\mu)(x')&=\left[\partial_{\mu}\partial_x g(x',\mu)(x) \right]^{\top}.
\end{split}
\end{align}
\item
\label{item:cost_potential}
There exist measurable functions $F:[0,T]\times\mathcal{P}_2(\mathbb{R}^n\times\mathbb{R}^k)\rightarrow\mathbb{R}$ and $G:\mathcal{P}_2(\mathbb{R}^n)\rightarrow\mathbb{R}$ such that $\frac{\delta F}{\delta \nu}(t,\cdot)$ and $\frac{\delta G}{\delta \mu}$ exist for all $t\in[0,T]$, $(t,x,a,\nu)\mapsto \frac{\delta F}{\delta \nu}(t,\nu)(x,a)\in\cC^{0,2,2}([0,T]\times\RR^{n+k}\times\cP_2(\RR^{n+k}))$, $(x,\mu)\mapsto \frac{\delta G}{\delta \mu}(\mu)(x)\in \cC^{2,2}(\RR^n\times \cP_2(\RR^n))$, and
  for  a.e.~$t\in [0,T]$, 
  and all $(x,a,\nu,\mu)\in  \mathbb{R}^n\times\mathbb{R}^k\times \mathcal{P}_2(\mathbb{R}^n\times\mathbb{R}^k) \times \mathcal{P}_2(\mathbb{R}^n)$,
\begin{equation}\label{eq:carmona_condition}
\begin{aligned}
\partial_{(x,a)}f(t,x,a,\nu)=\partial_{\nu}F(t,\nu)(x,a),\quad \partial_x g(x,\mu) =  \partial_{\mu} G(\mu)(x).
\end{aligned}
\end{equation}

\end{enumerate}
\end{theorem}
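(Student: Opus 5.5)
The plan is to prove (i)$\Leftrightarrow$(ii) by an explicit Hessian computation on empirical measures, and (ii)$\Leftrightarrow$(iii) as a Poincaré-lemma statement on Wasserstein space. I treat $f$ only; $g$ is identical with $T_r'$ in place of $T_r$ and no control variable. Write $y=(x,a)$, $y_j=(x_j,a_j)$ and $\mu^N=\frac1N\sum_{j}\delta_{y_j}$.

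\textbf{Hessian formula.} By Assumption \ref{assumption:mean_field}\ref{item:f_g_i} and the empirical-projection formulas for L-derivatives, for $j\neq i$ one has $\partial_{y_j}f_i=\frac1N\partial_\nu f(t,y_i,\mu^N)(y_j)$. Hence
$$\partial^2_{y_iy_j}f_i=\tfrac1N\,\partial_y\partial_\nu f(t,y_i,\mu^N)(y_j)+\tfrac1{N^2}\,\partial^2_\nu f(t,y_i,\mu^N)(y_j,y_i),$$
and the analogous formula holds for $f_j$ with $i$ and $j$ exchanged. The term $j=i$ contributes $0$ to the sums in (i). The $1/N^2$ terms are bounded because $f\in\cC^{0,2,2}$, so summed over $j$ they contribute $O(1/N)$. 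Using $\partial_\nu\partial_yf=(\partial_y\partial_\nu f)^\top$, this gives
$$\sum_{j}\big\|\partial^2_{y_iy_j}(f_i-f_j)\big\|=\frac1N\sum_{j}\big\|D(t,y_i,y_j,\mu^N)\big\|+O(1/N),\qquad D(t,y,y',\nu)\coloneqq\partial_\nu\partial_yf(t,y,\nu)(y')-\big[\partial_\nu\partial_yf(t,y',\nu)(y)\big]^\top .$$
Whether the right-hand side tends to $0$ uniformly (taking $\esssup_t$ and $\sup$ over configurations) is exactly statement (i).

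\textbf{(i)$\Leftrightarrow$(ii).} If (ii) holds then $D\equiv0$, and the display gives (i) with rate $O(1/N)$. For the converse, fix $t$ outside a null set, $y,y'$, and $\nu$. For $\varepsilon\in(0,1)$ put $\nu_\varepsilon=(1-\varepsilon)\nu+\varepsilon\delta_{y'}$. Choose configurations with $y_1=y$, a fraction $\approx\varepsilon$ of the atoms equal to $y'$, and the rest i.i.d.\ (or quantized) samples from $\nu$, so that $\mu^N\to\nu_\varepsilon$ in $\cW_2$. By (i) and the continuity of $\partial_y\partial_\nu f$ in all arguments, $\int\|D(t,y,z,\nu_\varepsilon)\|\,\nu_\varepsilon(dz)=0$. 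The left-hand side is at least $\varepsilon\|D(t,y,y',\nu_\varepsilon)\|$ because $\nu_\varepsilon$ has an atom at $y'$, so $D(t,y,y',\nu_\varepsilon)=0$. Letting $\varepsilon\to0$ and using continuity again gives (ii). Some care with the essential supremum in $t$ and with the growth bounds is needed to pass to the limit, but this is routine.

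\textbf{(iii)$\Rightarrow$(ii).} From $\partial_yf(t,y,\nu)=\partial_\nu F(t,\nu)(y)=\partial_y\frac{\delta F}{\delta\nu}(t,\nu)(y)$ it follows that $\partial_\nu\partial_yf(t,y,\nu)(y')=\partial_y\partial_{y'}\frac{\delta^2F}{\delta\nu^2}(t,\nu)(y,y')$. The second linear derivative is symmetric in $(y,y')$, which is a Schwarz-type lemma obtained by differentiating $\varepsilon\mapsto F(\nu+\varepsilon(\delta_y-\nu)+\eta(\delta_{y'}-\nu))$. The regularity required for this is supplied by the assumed $\cC^{0,2,2}$ property of $\frac{\delta F}{\delta\nu}$. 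This gives \eqref{eq:curl_free}.

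\textbf{(ii)$\Rightarrow$(iii): the main obstacle.} Define $F$ as the path integral along $r\mapsto\nu\circ T_r^{-1}$ from $\delta_0$:
$$F(t,\nu)\coloneqq\int F^\infty(t,y,\nu)\,\nu(dy)=\EE\int_0^1 Y^\top\partial_yf\big(t,rY,\cL(rY)\big)\,dr,\qquad Y\sim\nu .$$
To identify $\partial_\nu F$, I perturb $Y\mapsto Y+\varepsilon H$ and differentiate at $\varepsilon=0$. The integrand of the derivative is
$$H^\top\partial_yf+rY^\top\partial^2_{yy}f\,H+r\,Y^\top\tilde\EE\big[\partial_\nu\partial_yf(rY,\cdot)(r\tilde Y)\tilde H\big].$$
Two symmetry facts now apply. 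The Hessian $\partial^2_{yy}f$ is symmetric. The closedness (ii), combined with exchanging $Y\leftrightarrow\tilde Y$ under $\EE\tilde\EE$, rewrites the last term as $r\,\EE\tilde\EE\big[H^\top\partial_\nu\partial_yf(rY,\cdot)(r\tilde Y)\tilde Y\big]$. After these rewrites the whole integrand equals $\frac{d}{dr}\big[r\,H^\top\partial_yf(t,rY,\cL(rY))\big]$, so the derivative is $\EE[H^\top\partial_yf(t,Y,\nu)]$, i.e.\ $\partial_\nu F(t,\nu)=\partial_yf(t,\cdot,\nu)$. This step is essentially a Green's-formula argument.

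The real difficulty is the regularity part of (iii): one must show that $\frac{\delta F}{\delta\nu}$ exists, that it equals $\frac{\delta F}{\delta\nu}(t,\nu)(y)=\int_0^1 y^\top\partial_yf(t,ry,\nu)\,dr$ up to a constant (by integrating $\partial_\nu F$ along a ray in $y$), and that this function lies in $\cC^{0,2,2}$. Differentiation under the integral signs must be justified using the linear-growth and boundedness conditions in the definition of $\cC^{0,2,2}$, and I expect this to be where most of the work lies.
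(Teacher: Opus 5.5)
\textbf{Where you follow the paper.} Your architecture matches the paper's. You use the same Hessian expansion $\partial^2_{y_iy_j}(f_i-f_j)=\frac1N D^\top+O(N^{-2})$. You use the same candidate $F(t,\nu)=\int F^\infty(t,y,\nu)\,\nu(dy)$. You use the same G\^ateaux computation, where symmetry of $\partial^2_{yy}f$ and closedness (after exchanging $(Y,H)\leftrightarrow(\tilde Y,\tilde H)$) turn the integrand into $\frac{d}{dr}\bigl[rH^\top\partial_yf(t,rY,\cL(rY))\bigr]$.

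For (iii)$\Rightarrow$(ii) the paper is more direct than your detour through $\frac{\delta^2F}{\delta\nu^2}$. Since $\partial_\nu F(t,\nu)(y)=\partial_yf(t,y,\nu)$ is already L-differentiable in $\nu$, one has $\partial^2_\nu F(t,\nu)(y,y')=\partial_\nu\partial_yf(t,y,\nu)(y')$. Symmetry of the second L-derivative \cite[Corollary 5.89]{carmona2018probabilistic1} then gives (ii) without constructing a second linear derivative.

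\textbf{Your (i)$\Rightarrow$(ii) is a different, valid route.} The paper uses that in (i) each $\Vert\cdot\Vert_\infty$ is a supremum taken separately for each pair $(i,j)$. By exchangeability the sum is $(N-1)\Vert\partial^2_{y_1y_2}(f_1-f_2)\Vert_\infty$, which up to $O(1/N)$ is the supremum of $\vert D(t,z,z',\nu)\vert$ over empirical measures $\nu$ with atoms at $z$ and $z'$. Density of such measures then identifies the limit in (i) as exactly $\Vert D\Vert_\infty$.

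You instead use only the weaker averaged consequence $\sup_z\frac1N\sum_j\Vert D(t,y_1,y_j,\mu^N)\Vert\to0$. Your remark that this is ``exactly statement (i)'' is inaccurate, since (i) has the supremum inside the sum. This is harmless because you only need the implication. You then compensate with the mixture $\nu_\varepsilon=(1-\varepsilon)\nu+\varepsilon\delta_{y'}$ to isolate $y'$. The paper's version gives a sharper identification of the limit; yours works under a weaker hypothesis.

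\textbf{The gap is in (ii)$\Rightarrow$(iii).} You leave unproved that $\frac{\delta F}{\delta\nu}$ exists and lies in $\cC^{0,2,2}$. You also omit the upgrade from G\^ateaux to Fr\'echet differentiability of the lift, which is what actually identifies $\partial_\nu F$. The paper gets that upgrade from $L^2$-continuity of $X\mapsto\partial_yf(t,X,\cL(X))$, via the continuous mapping theorem, linear growth and Vitali.

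The part you flag as the main obstacle is short once you notice that your candidate collapses. By the fundamental theorem of calculus, $\int_0^1 y^\top\partial_yf(t,ry,\nu)\,dr=f(t,y,\nu)-f(t,0,\nu)$, so the linear derivative is $f$ itself up to a $y$-independent term. The paper invokes \cite[Proposition 5.51]{carmona2018probabilistic1}: this gives existence of the linear functional derivative from the jointly continuous L-derivative, with $\partial_y\frac{\delta F}{\delta\nu}=\partial_\nu F$. It yields $\frac{\delta F}{\delta\nu}(t,\nu)(y)=f(t,y,\nu)-\int f(t,y',\nu)\,\nu(dy')$. The $\cC^{0,2,2}$ regularity is then inherited from $f$, because the derivatives of $\nu\mapsto\int f(t,y',\nu)\,\nu(dy')$ involve only derivatives of $f$ that membership $f\in\cC^{0,2,2}$ already controls. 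With these two additions your argument is complete.
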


\begin{remark}

Theorem \ref{thm:alpha0_equiv} shows that the condition $\lim_{N\to \infty}\alpha_N = 0$ is  equivalent to the standard conditions imposed for potential MFGs,
thereby demonstrating that these conditions arise naturally as limits of $N$-player $\alpha_N$-potential games with vanishing $\alpha_N$. 

Indeed, in the mean field literature, Condition \eqref{eq:carmona_condition} is often imposed as a sufficient condition for the existence of a potential structure (see, e.g., \cite[Equation 6.132]{carmona2018probabilistic1}). 
 A sufficient condition for \eqref{eq:carmona_condition}
is the existence of sufficiently regular functions $F$ and $G$ such that
(see  \cite[Equation (3)]{hofer2026optimal}):
\begin{align}
\label{eq:f_linear}
\begin{split}
f(t,x,a,\nu)=& \int_{\RR^n\times \RR^k}\frac{\delta F}{\delta \nu}(t,x',a',\nu)(x,a)\nu(dx',da') + F(t,x,a,\nu),\quad \\
g(x,\mu)=& \int_{\RR^n}\frac{\delta G}{\delta \mu}(x',\mu)(x)\mu(dx') + G(x,\mu).
\end{split}
\end{align}
In this case, by setting  
 $\hat F(t, \nu)\coloneqq \int_{\RR^n\times \RR^k} F(t,x,a,\nu)\nu(dx,da)$ and 
$\hat G(\mu)\coloneqq \int_{\RR^n} G(x,\mu)\mu(dx)$ for $\mu\in\cP_2(\RR^n)$, it holds that 
$ 
f(t,x,a,\nu)=\frac{\delta \hat F}{\delta \nu}(t,\nu)(x,a) + \hat F(\nu),$ 
and $g(x,\mu)=\frac{\delta \hat G}{\delta \mu}(\mu)(x) + \hat G(\mu)$, which implies Condition \eqref{eq:carmona_condition}.
Theorem~\ref{thm:alpha0_equiv} rigorously establishes the equivalence between   Conditions \eqref{eq:curl_free} and \eqref{eq:carmona_condition}.
In the special case where the mean-field interaction appears only in the state variable, \cite[Proposition 1.2]{cardaliaguet2017learning} establishes the analogous equivalence between Conditions \eqref{eq:curl_free} and \eqref{eq:carmona_condition} using the notion of linear functional derivatives.

\end{remark}

The next theorem complements Theorem~\ref{thm:alpha0_equiv} by characterizing the functions $F$ and $G$ analytically through path integrals on the Wasserstein space.
For each $p\in\NN^*$, we define  the tangent space $\Tan_{\mu}(\cP_2(\RR^p))$ at a point $\mu\in\cP_2(\RR^p)$ by
\begin{equation*}
\Tan_{\mu}(\cP_2(\RR^p))\coloneqq \overline{\{\nabla \varphi \mid \varphi\in C_c^{\infty}(\RR^p)\}}^{L^2(\RR^p,\mu;\RR^p)},
\end{equation*}
and introduce the notion of absolutely continuous curves
in $\cP_2(\RR^p)$ as in \cite[Chapter 2]{gangbo2011differential}.

\begin{definition}
Let $(a,b)\subset\RR$,
we say 
a curve $\bmu=(\mu_t)_{t\in (a,b)}\subset  \cP_2(\RR^p)$ is absolutely continuous if there exists $\beta\in L^2((a,b);\RR)$ such that $\cW_2(\mu_r,\mu_s)\leq \int_s^r \beta(\tau)d\tau$ for all $a<s<r<b$. Then $\vert\mu'\vert(r)\coloneqq \lim_{s\to r}\frac{\cW_2(\mu_r,\mu_s)}{\vert r-s\vert}$ exists for a.e.~$r\in (a,b)$, $\int_a^b \vert\mu'\vert(r)^2 dr<\infty$, and there exists a Borel-measurable function    $v:(a,b)\times \RR^p\ni (r,x)\mapsto v_r(x)\in \RR^p$ such that $v_r\in \Tan_{\mu_r}(\cP_2(\RR^p))$ and $\Vert v_r\Vert_{L^2(\RR^p,\mu_r;\RR^p)}=\vert\mu'\vert(r)$ for a.e.~$r\in (a,b)$, and  
$\partial_t \mu_t +\operatorname{div}(v_t(\cdot) \mu_t)=0$, understood in the sense of distribution. Note that 
$v_r$ is uniquely determined for a.e.~$r\in(a,b)$. 

 In this case, we say $\bmu$ has velocity $v$, connecting  $\mu_a\coloneqq \lim_{r\downarrow a} \mu_r$ with $\mu_b\coloneqq \lim_{r\uparrow b} \mu_r$.
 We denote by $AC(a,b;\cP_2(\RR^p))$ the set of all absolutely continuous curves in $\cP_2(\RR^p)$.

\end{definition}

We now  show that 
the functions $F$ and $G$ in \eqref{eq:carmona_condition} are uniquely determined by integrating the derivatives of the cost functions $f$ and $g$ along absolutely continuous curves (Theorem  \ref{thm:construction_F_G}).
Combined, Theorems~\ref{thm:alpha0_equiv} and \ref{thm:construction_F_G}  establish a Poincar\'e lemma on the Wasserstein space, stating that suitable closed differential forms on this space are exact, i.e., they coincide with the differential of a function on the Wasserstein space, which in turn is given by path integrals of the   differential form. 
The proof relies on establishing   a   Green’s theorem  for the Wasserstein space. (See Proposition \ref{prop:green_formula} and
Remark \ref{rmk:green_regularity}).

\begin{theorem}
\label{thm:construction_F_G}
    Suppose Assumption \ref{assumption:mean_field}\ref{item:f_g_i} 
and any of the equivalent conditions in Theorem \ref{thm:alpha0_equiv} hold. 
Then
the functions $F$ and $G$ satisfy   \eqref{eq:carmona_condition} if and only if they are of the form: for all $(t,\nu,\mu)\in [0,T]\times \cP_2(\RR^{n+k})\times \cP_2(\RR^n)$, 
\begin{align}
\label{eq:mean_field_potential}
\begin{split}
F(t,\nu) & \coloneqq \int_0^1 \int_{\RR^n\times\RR^k}\partial_{(x,a)}f(t,x,a,\nu_r)^{\top}w_r(x,a)\nu_r(dx,da)dr
+\tilde F(t),\\ 
G(\mu)  &\coloneqq \int_0^1 \int_{\RR^n}\partial_x g(x,\mu_r)^{\top}v_r(x)\mu_r(dx)dr+\tilde G,
\end{split}
\end{align}
where $(\nu_r)_{r\in (0,1)}\in AC(0,1;\cP_2(\RR^{n+k}))$ has velocity $w$, connecting a fixed initial measure $\nu_0\in\cP_2(\RR^{n+k})$ with $\nu$,
$(\mu_r)_{r\in (0,1)}\in AC(0,1;\cP_2(\RR^n))$ has 
velocity $v$, connecting a fixed initial measure $\mu_0\in\cP_2(\RR^{n})$ with $\mu$,
 $\tilde F:[0,T]\to \RR$ is a measurable function, and $\tilde G\in\RR$ is a constant.

  In particular,  the MFC problem \eqref{eq:mf_potential_measure} corresponds to choosing $F$  with $\Tilde F=0$,   $\nu_0=\delta_0$,  and $(\nu_r)_{r\in (0,1)}$ a constant-speed curve, and similarly for $G$.

\end{theorem}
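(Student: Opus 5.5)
The argument is a chain rule along absolutely continuous curves, combined with path independence, which I would derive from the curl-free condition \eqref{eq:curl_free} via a Green-type formula. I treat $G$ only; $F$ is identical with $t$ frozen and $(x,a)$ in place of $x$.

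\emph{Necessity.} Suppose $G$ satisfies \eqref{eq:carmona_condition}, so $\partial_\mu G(\mu)(x)=\partial_x g(x,\mu)$. Take $(\mu_r)_{r\in(0,1)}\in AC(0,1;\cP_2(\RR^n))$ with velocity $v$, joining $\mu_0$ to $\mu$. I would apply the chain rule for L-differentiable functions along absolutely continuous curves (as in \cite{gangbo2011differential}, or Ambrosio--Gigli--Savar\'e):
\[
\frac{d}{dr}G(\mu_r)=\int_{\RR^n}\partial_\mu G(\mu_r)(x)^{\top}v_r(x)\,\mu_r(dx)\quad\text{for a.e. } r.
\]
The linear growth bound on $\partial_\mu G=\partial_x g$ from the class $\cC^{2,2}$ makes this integrable, since $\int_0^1\|v_r\|_{L^2(\mu_r)}dr<\infty$ and $\sup_r\int|x|^2d\mu_r<\infty$. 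To get the chain rule rigorously, I would first represent the curve through a random path $X_r$ with $\mathcal L(X_r)=\mu_r$ and $\dot X_r=v_r(X_r)$ (the superposition principle). One has to be careful here: in general a superposition only exists as a measure on paths, not as a flow. I would then differentiate $\tilde G(X_r)=G(\mathcal L(X_r))$ in $L^2$ using the lift. Integrating from $0$ to $1$ and setting $\tilde G\coloneqq G(\mu_0)$ gives \eqref{eq:mean_field_potential}. Since $\tilde G$ is determined by the single value $G(\mu_0)$, the formula also shows that $G$ is unique up to this constant.

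\emph{Sufficiency.} Define $G$ by \eqref{eq:mean_field_potential}. Theorem~\ref{thm:alpha0_equiv} already gives some $G^\star$ satisfying \eqref{eq:carmona_condition}. By the necessity step, the path integral equals $G^\star(\mu)-G^\star(\mu_0)$ for every admissible curve. Hence the definition is well defined, i.e. independent of the curve, and $G=G^\star-G^\star(\mu_0)+\tilde G$. Subtracting a constant does not change L-derivatives, so $G$ satisfies \eqref{eq:carmona_condition}. If instead one wants a self-contained argument avoiding Theorem~\ref{thm:alpha0_equiv}, path independence must come from the Green formula (Proposition~\ref{prop:green_formula}). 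The difference of the integrals over two curves is a boundary integral of the 1-form $\mu\mapsto\partial_x g(\cdot,\mu)$, and it vanishes because its exterior derivative is the antisymmetric part of $\partial_\mu\partial_x g$, which is zero by \eqref{eq:curl_free}. Differentiating along perturbations $\mu\circ(\mathrm{id}+\epsilon\varphi)^{-1}$ then identifies $\partial_\mu G$.

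\emph{MFC case.} Take $\mu_0=\delta_0$ and $\mu_r=\mu\circ(T_r')^{-1}$. This curve has velocity $v_r(y)=y/r$, which lies in the tangent space, is the gradient of $|y|^2/(2r)$, and has constant speed $\|x\|_{L^2(\mu)}$. Changing variables $y=rx$ gives
\[
\int_{\RR^n}\partial_x g(y,\mu_r)^{\top}\frac{y}{r}\,\mu_r(dy)=\int_{\RR^n} x^{\top}\partial_x g(rx,\mu\circ (T_r')^{-1})\,\mu(dx).
\]
Integrating over $r$ yields $\int G^\infty(x,\mu)\,\mu(dx)$, which is exactly the terminal term in $\Tilde\Phi$ of \eqref{eq:mf_potential_measure}; the running term matches in the same way. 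The hard step is the rigorous chain rule for merely absolutely continuous curves, since $\partial_\mu G$ is only continuous with linear growth. I would handle it by mollifying the curve in time and using continuity of $\partial_x g$ together with dominated convergence.
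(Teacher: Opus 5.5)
Your proof is correct, but it reaches path independence by a different mechanism than the paper.

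\emph{The paper's route.} For necessity, the paper works only along the dilation curve $r\mapsto\nu\circ T_r^{-1}$ from $\delta_0$. There the chain rule is just the fundamental theorem of calculus for the lift along the segment $r\mapsto rZ$ in $L^2$. For sufficiency, it concatenates the dilation curve from $\delta_0$ to $\nu_0$ with the given curve. It then replaces this by the dilation curve from $\delta_0$ to $\nu$ via Proposition~\ref{prop:closed_exact}, which rests on the Green formula of Proposition~\ref{prop:green_formula} applied to a dilated loop with $\delta\downarrow 0$. Only after that does it invoke the proof of Theorem~\ref{thm:alpha0_equiv} to see that the dilation integral is a primitive.

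\emph{Your route.} You use that primitive $G^\star$ from the outset. Exactness plus the chain rule along arbitrary curves in $AC(0,1;\cP_2(\RR^n))$ gives path independence directly, as in the Poincar\'e lemma on star-shaped domains of $\RR^d$. So the Green formula is not needed for this theorem. Your necessity statement is also stronger: it covers every base point and every curve, not only $\delta_0$ and dilations.

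\emph{The cost, and how to pay it.} Your route needs the chain rule for general absolutely continuous curves. The paper needs the same ingredient anyway, inside Lemma~\ref{lemma:V_W_deriv}, via optimal couplings and \cite[Proposition 5.27]{gangbo2011differential}; you can reuse that argument verbatim. Your superposition idea also works as stated, with no flow required. On path space, $r\mapsto X_r$ is absolutely continuous in $L^2$ with $\|X_{r'}-X_r\|_{L^2}\le\int_r^{r'}\|v_s\|_{L^2(\mu_s)}ds$ and derivative $v_r(X_r)$. The lift of $G$ is $C^1$ on $L^2$, since $\partial_x g$ is jointly continuous with linear growth. The Banach-space chain rule therefore applies, and the mollification detour is unnecessary.

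\emph{What the paper's route buys.} It shows that the closedness condition \eqref{eq:curl_free} alone forces path independence, without first exhibiting a primitive. This comes via a Stokes-type identity of independent interest.

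\emph{Your alternative sketch.} Your ``self-contained'' variant is only sketched. Proposition~\ref{prop:green_formula} covers only the dilation surfaces $S_\delta$ spanned by a single curve. You would need to join the two curves into a loop and send $\delta\downarrow 0$, exactly as in the proof of Proposition~\ref{prop:closed_exact}.
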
 

\begin{remark}[Comparison with finite-player potential functions]\label{rmk:potential_construction}
    Theorem \ref{thm:construction_F_G} extends the characterization of potential functions for $N$-player  static games  in \cite[Theorem 4.5]{monderer1996potential} and $N$-player dynamic games in \cite[Theorem 3.2]{guo2025towards}. There, when the cost functions have symmetric Hessians, the potential function is uniquely determined (up to a constant) by integrating the sum of all players’ cost gradients along any piecewise $C^1$ path in Euclidean space.

There are three key changes in the construction \eqref{eq:mean_field_potential} for the mean-field setting. First, piecewise $C^1$ paths in Euclidean spaces are replaced by absolutely continuous curves in Wasserstein spaces. Second, since the mean-field potential function is obtained by normalizing the $N$-player potential function by $N$ (cf.~\eqref{eq:scaled_N_potential}), \eqref{eq:mean_field_potential} involves the averaged gradient of the cost function, rather than the sum appearing in the $N$-player case.
Third,    a  Green’s formula on the Wasserstein space is required to ensure that the integrals in \eqref{eq:mean_field_potential} are invariant with respect to the choice of curve (see Proposition \ref{prop:closed_exact}). 
 
 \end{remark}

\subsection{Formulation of MFGs}

We now   formulate the MFG associated with   \eqref{eq:mf_lifted}. Compared with the standard MFG formulation  \cite{carmona2018probabilistic1}, the MFG considered here has two distinguishing features:
(1) the representative player optimizes over measure-valued controls, as in the MFC problem \eqref{eq:mf_lifted}; and
(2) the running cost depends jointly on the control variable and the joint state-control law, which requires to model  the joint distribution of the representative control and the population distribution.

Consider the canonical space $\Tilde{\Omega}'\coloneqq C([0,T];\cP(\RR^n))\times C([0,T];\cP(\RR^n)) \times \MM(\cP(\RR^n\times A)^2)\times C([0,T];\RR^l)$. Let $(\Tilde{\mu},\Tilde{\mu}',\Tilde{\Pi},\Tilde{B})$  be the canonical element of $\Tilde{\Omega}'$, and define
$\Tilde{\Lambda}\coloneqq \Tilde{\Pi}(dm,\cP(\RR^n\times A), dt)$,   $\Tilde{\Lambda}'\coloneqq \Tilde{\Pi}(\cP(\RR^n\times A),dm',dt)$, and
the canonical filtration $\Tilde{\mathbb{F}}'=(\Tilde{\cF}_t')_{t\in [0,T]}$  such that $
\Tilde{\cF}_t'\coloneqq \sigma\big\{ \Tilde{\mu}_{t\wedge\cdot},\Tilde{\mu}_{t\wedge\cdot}', \Tilde{\Pi}^t, \Tilde{B}_{t\wedge\cdot}  \big\}
$
  for all $t\in [0,T]$.
By \cite[Lemma 3.2]{lacker2015mean}, there exists  a disintegration $\Tilde{\Pi}(dm,dm',dt)=\Tilde{\Pi}_t(dm,dm')dt$ with $(\Tilde{\Pi}_t)_{t\in [0,T]}$ being a $\mathcal{P}(\cP(\RR^n\times A)^2)$-valued $\Tilde{\mathbb{F}}'$-predictable process. Similarly, let  $(\Tilde{\Lambda}_t)_{t\in [0,T]}$ and $(\Tilde{\Lambda}_t')_{t\in [0,T]}$ be the $\Tilde{\mathbb{F}}'$-predictable disintegration of $\Tilde{\Lambda}$ and $\Tilde{\Lambda}'$, respectively.
Here $\Tilde{\mu}'$ and $\Tilde{\Lambda}'$ denote, respectively,
the representative player's conditional state law and 
the conditional law of state-action joint law given the common noise, 
$(\Tilde{\mu}, \Tilde{\Lambda})$ represent  the corresponding conditional distributions for the population, and  $\Tilde{B}$ represents the common noise.

The set of admissible measure-valued controls for the representative player is defined as follows.

 \begin{definition}\label{def:measure_control2}
Given an initial distribution $\nu\in\cP_2(\mathbb{R}^n)$, $\Tilde{\mathbb{P}}\in\cP_2(\Tilde{\Omega}')$ is an
admissible  measure-valued control for $\nu$ if 
\begin{enumerate}[label=(\roman*)]
\item\label{item:initial_state'} $\Tilde{\mathbb{P}}(\Tilde{\mu}_0'=\nu)=1$.
\item\label{item:state_martingale'} $(\Tilde{B}_t)_{t\in[0,T]}$ is a $(\Tilde{\mathbb{P}},\Tilde{\mathbb{F}}')$-Wiener process starting at zero, and for $\Tilde{\mathbb{P}}$-almost every $\omega\in\Tilde{\Omega}'$, $N_t(f,\Tilde{\mu}',\Tilde{\Lambda}')=0$ for all $f\in C_b^2(\mathbb{R}^n)$ and all $t\in [0,T]$.
\item\label{item:state_marginal'} For $d\Tilde{\mathbb{P}}\otimes dt$ almost every $(t,\omega)\in [0,T]\times\Tilde{\Omega}'$, $\Tilde{\Lambda}_t'(\mathbb{Z}_{\Tilde{\mu}_t'})=1$.
\end{enumerate}
We denote by $\cP_V'(\nu)$ the set of all admissible measure-valued controls for  $\nu$.
\end{definition}

 By exploiting     the   fact that  the state dynamics \eqref{eq:SDE_meanfield} is independent of the population distribution, 
 Definition \ref{def:measure_control2}
 states that,   conditional on the  population  law  $(\Tilde{\mu}, \Tilde{\Lambda})$ and the common noise $\Tilde{B}$, the   law $(\Tilde{\mu}', \Tilde{\Lambda}')$ of the representative player satisfies the martingale condition corresponding to an admissible control for the MFC problem (cf.~Definition \ref{def:measure_control}).
 Indeed, for any $\Tilde{\PP}\in\cP_V'(\nu)$,   $\Tilde{\PP}\circ(\Tilde{\mu}',\Tilde{\Lambda}',\Tilde{B})^{-1}\in\cP_V(\nu)$, 
 since  $\tilde B$ is also an $\Tilde{\mathbb F}$-Brownian motion.
 
We define the representative player's objective function $\Tilde{J}:\cP_2(\Tilde{\Omega}')\rightarrow\mathbb{R}$
as follows: for all $\Tilde{\mathbb{P}}\in\cP_2(\Tilde{\Omega}')$,
\begin{equation}\label{eq:mfg_measure}
\Tilde{J}(\Tilde{\mathbb{P}})\coloneqq \mathbb{E}^{\Tilde{\mathbb{P}}}\left[\int_0^T \int_{\cP(\RR^n\times A)^2} \left\langle f(t,\cdot,\cdot,m),m' \right\rangle \Tilde{\Pi}_t(dm,dm')dt + \left\langle g(\cdot,\Tilde{\mu}_T),\Tilde{\mu}_T' \right\rangle \right],
\end{equation}
with the functions $f$ and $g$ given in 
Assumption \ref{assumption:mean_field}.
Since $f$ depends jointly on $m$ and $m'$ in a non-separable manner, \eqref{eq:mfg_measure} depends on $\Tilde{\Pi}$ rather than only on its  marginals, in contrast to the separable cases considered in \cite{carmona2015probabilistic, cardaliaguet2018mean, lauriere2022convergence, djete2023mean}.

The precise definition of an  NE of the above MFG is given below.

\begin{definition}\label{def:MFG_solution}
Given $\nu\in\cP_2(\mathbb{R}^n)$, $\hat{\mathbb{P}}^*\in\cP_V'(\nu)$ is a mean field equilibrium (MFE)   for the MFG \eqref{eq:mfg_measure} if
\begin{enumerate}[label=(\roman*)]
\item
\label{item:optimality}
(Optimality) For every $\Tilde{\mathbb{P}}\in\cP_V'(\nu)$ such that $\hat{\mathbb{P}}^*\circ (\Tilde{\mu},\Tilde{\Lambda},B)^{-1}=\Tilde{\mathbb{P}}\circ (\Tilde{\mu},\Tilde{\Lambda},B)^{-1}$, $\Tilde{J}(\Tilde{\mathbb{P}})\geq \Tilde{J}(\hat{\mathbb{P}}^*)$.
\item
\label{item:consistency}
 (Consistency) For $\hat{\mathbb{P}}^*$-almost every $\omega\in \Tilde{\Omega}'$, $\Tilde{\mu}=\Tilde{\mu}'$, and for $d\hat{\mathbb{P}}^*\otimes dt$ almost every $(t,\omega)\in [0,T]\times \Tilde{\Omega}'$, $\Tilde{\Pi}_t(\{m=m'\})=1$.
\end{enumerate}
\end{definition}

Condition \ref{item:optimality}  
requires 
$\hat{\mathbb{P}}^*$ to minimize
\eqref{eq:mfg_measure}   over all measure-valued controls with the fixed population and common-noise distribution, and 
Condition \ref{item:consistency} requires that    the conditional state-control law of the representative player agrees with that of the population almost surely.  This latter requirement imposes a stronger consistency condition than that in \cite[Definition 2.7]{djete2023mean}, which only requires the  conditional laws to coincide in distribution, i.e., $\Tilde{\Lambda}' = \Tilde{\Lambda}$.

 \subsection{Objective functions of MFC problems as potential functions for MFGs}

We now identify the objective  function \eqref{eq:mf_potential_measure} of the MFC problem \eqref{eq:mf_lifted} as a potential function for the MFG \eqref{eq:mfg_measure} under the conditions in Theorem \ref{thm:alpha0_equiv}.
Note that the MFC problem  \eqref{eq:mf_lifted} is formulated on $\Tilde{\Omega}$, whereas the MFG is defined on a different space $\Tilde{\Omega}'$ involving duplicated variables.
To connect the MFC and MFG,
the following lemma introduces a canonical embedding that maps any control of the MFC problem \eqref{eq:mf_lifted} to a control of the MFG \eqref{eq:mfg_measure}. 
It further constructs an interpolation between the marginal distributions of a control for the MFG, which will be used to quantify the variation of the MFC objective $\Tilde{\Phi}$ when the representative player perturbs its control.

\begin{lemma}
\label{lemma:mfg_mfc}
\begin{enumerate}[label=(\roman*)]
\item\label{item:embedding}
Define the map $\iota:\cP(\Tilde{\Omega})\to \cP(\Tilde{\Omega}')$ such that  for each $\Tilde{\PP}\in \cP(\Tilde{\Omega})$, 
$
\iota(\Tilde{\PP})\coloneqq \Tilde{\PP}\circ(\Tilde{\mu},\Tilde{\mu},\Tilde{\Lambda}_t(dm)\delta_m(dm')dt, \Tilde{B})^{-1}.
$
For all $\Tilde{\PP}\in \cP_V(\nu)$,  it holds that $\iota(\Tilde{\PP})\in \cP_V'(\nu)$,
$\iota(\Tilde{\PP})\circ(\Tilde{\mu},\Tilde{\Lambda},\Tilde{B})^{-1} = \Tilde{\PP} $,
and $\iota(\Tilde{\PP})$ satisfies Condition \ref{item:consistency} in Definition \ref{def:MFG_solution}.
\item\label{item:perturbation}
For each $\varepsilon\in [0,1]$, define the map $\kappa_{\varepsilon}:\cP(\Tilde{\Omega}')\to\cP(\Tilde{\Omega})$ such that 
\begin{equation*}
\kappa_{\varepsilon}(\Tilde{\PP}')\coloneqq \Tilde{\mathbb{P}}'\circ\left(\Tilde{\mu}^{\varepsilon}, \Tilde{\Lambda}^{\varepsilon},\Tilde{B} \right)^{-1},
\quad  \forall \Tilde{\PP}'\in\cP(\Tilde{\Omega}'),
\end{equation*}
where $\Tilde{\mu}^{\varepsilon}\coloneqq \varepsilon \Tilde{\mu}'+(1-\varepsilon)\Tilde{\mu}$, and $\Tilde{\Lambda}^{\varepsilon}\coloneqq (\Tilde{\Pi}_t\circ\tau_{\varepsilon}^{-1})dt$, with the function $\tau_{\varepsilon}:\cP(\RR^n\times A)\times \cP(\RR^n\times A)\rightarrow\cP(\RR^n\times A)$ defined as $\tau_{\varepsilon}(m,m')=\varepsilon m'+(1-\varepsilon)m$ for all $m,m'\in \cP(\RR^n\times A)$.

For all $\Tilde{\PP}\in\cP_V'(\nu)$ with $\Tilde{\PP}\circ (\Tilde{\mu},\Tilde{\Lambda},\Tilde{B}  )^{-1}\in\cP_V(\nu)$, it holds that 
 $\kappa_{\varepsilon}(\Tilde{\PP})\in \cP_V(\nu)$ for all $\varepsilon\in[0,1]$,
 $\kappa_{0}(\Tilde{\PP}) = \Tilde{\PP}\circ(\Tilde{\mu},\Tilde{\Lambda},\Tilde{B})^{-1}$, and $\kappa_{1}(\Tilde{\PP}) = \Tilde{\PP}\circ(\Tilde{\mu}',\Tilde{\Lambda}',\Tilde{B})^{-1}$.
\end{enumerate}
\end{lemma}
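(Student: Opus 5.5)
Both items come down to checking the three conditions of Definitions \ref{def:measure_control} and \ref{def:measure_control2} for pushforward measures. The main tools are the linearity of the martingale functional $N_t(f,\cdot,\cdot)$ in the pair $(\Tilde{\mu},\Tilde{\Lambda})$ and the fact that the relevant filtrations are generated by measurable functionals of one another.

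\emph{Item \ref{item:embedding}.} Let $\Tilde{\PP}\in\cP_V(\nu)$ and set $\Tilde{\PP}'\coloneqq\iota(\Tilde{\PP})$. Under $\Tilde{\PP}'$ we have $\Tilde{\mu}'=\Tilde{\mu}$ and $\Tilde{\Pi}_t(dm,dm')=\Tilde{\Lambda}_t(dm)\delta_m(dm')$. Hence both marginals $\Tilde{\Lambda}'$ and $\Tilde{\Lambda}$ equal the original $\Tilde{\Lambda}$, and the identity $\iota(\Tilde{\PP})\circ(\Tilde{\mu},\Tilde{\Lambda},\Tilde{B})^{-1}=\Tilde{\PP}$ follows at once. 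Condition \ref{item:consistency} of Definition \ref{def:MFG_solution} holds because $\Tilde{\Pi}_t$ is carried by the diagonal $\{m=m'\}$. Conditions \ref{item:initial_state'} and \ref{item:state_marginal'}, together with the martingale identity $N_t(f,\Tilde{\mu}',\Tilde{\Lambda}')=0$, are transferred directly from the corresponding properties of $\Tilde{\PP}$.

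The one substantive point is the Brownian property of $\Tilde{B}$ with respect to $\Tilde{\mathbb{F}}'$. The map $\Tilde{\Omega}\ni(\mu,\Lambda,B)\mapsto(\mu,\mu,\Lambda_t\delta_m dt,B)$ is adapted: $\Tilde{\Pi}^t$ is a measurable function of $\Tilde{\Lambda}^t$, and conversely $\Tilde{\Lambda}^t$ is a marginal of $\Tilde{\Pi}^t$. Therefore $\Tilde{\cF}'_t$ is the image of $\Tilde{\cF}_t$, and independence of increments $\Tilde{B}_s-\Tilde{B}_t$ from $\Tilde{\cF}_t$ transfers. A small technicality is that the disintegration $\Tilde{\Pi}_t$ agrees a.e.\ in $t$ with $\Tilde{\Lambda}_t\delta_m$. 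This is harmless, since all conditions hold only $dt$-a.e.

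\emph{Item \ref{item:perturbation}.} The cases $\varepsilon=0$ and $\varepsilon=1$ are immediate from $\tau_0(m,m')=m$ and $\tau_1(m,m')=m'$. For general $\varepsilon$ I check the three conditions in turn.
\begin{enumerate}[label=(\alph*)]
\item Initial condition: $\Tilde{\mu}^\varepsilon_0=\varepsilon\nu+(1-\varepsilon)\nu=\nu$, using $\Tilde{\mu}_0=\nu$ from the hypothesis $\Tilde{\PP}\circ(\Tilde{\mu},\Tilde{\Lambda},\Tilde{B})^{-1}\in\cP_V(\nu)$ and $\Tilde{\mu}'_0=\nu$ from Definition \ref{def:measure_control2}.
\item Marginal condition: by Fubini, $\int \mathbf 1_{\mathbb Z_{\Tilde{\mu}^\varepsilon_t}}\,d(\Tilde{\Pi}_t\circ\tau_\varepsilon^{-1})=\Tilde{\Pi}_t\{(m,m'):\varepsilon m'+(1-\varepsilon)m\in\mathbb Z_{\Tilde{\mu}^\varepsilon_t}\}$. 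This equals $1$ because $\Tilde{\Pi}_t$-a.s.\ $m\in\mathbb Z_{\Tilde{\mu}_t}$ and $m'\in\mathbb Z_{\Tilde{\mu}'_t}$, and taking $x$-marginals is linear.
\item Martingale identity: $N_t(f,\cdot,\cdot)$ is affine in $(\Tilde{\mu},\Tilde{\Lambda})$, so
$\int\!\int\phi\,dm\,(\Tilde{\Pi}_r\circ\tau_\varepsilon^{-1})(dm)=\varepsilon\int\!\int\phi\,dm'\,\Tilde{\Pi}_r+(1-\varepsilon)\int\!\int\phi\,dm\,\Tilde{\Pi}_r$. This gives $N_t(f,\Tilde{\mu}^\varepsilon,\Tilde{\Lambda}^\varepsilon)=\varepsilon N_t(f,\Tilde{\mu}',\Tilde{\Lambda}')+(1-\varepsilon)N_t(f,\Tilde{\mu},\Tilde{\Lambda})=0$ a.s.
\end{enumerate}
In (c), the vanishing of $N_t(f,\Tilde{\mu},\Tilde{\Lambda})$ comes from the hypothesis, interpreted through the $\Tilde{\Omega}'$-measurable functional, and we need this simultaneously for all $f\in C_b^2$ and all $t$. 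I would handle this by taking a countable dense family of test functions, using continuity in $t$, and then passing to the limit.

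I expect the main obstacle to be the Brownian property of $\Tilde{B}$ under $\kappa_\varepsilon(\Tilde{\PP})$ with respect to $\Tilde{\mathbb F}$. The filtration generated by $(\Tilde{\mu}^\varepsilon,\Tilde{\Lambda}^\varepsilon,\Tilde{B})$ is contained in $\Tilde{\mathbb F}'$, because $\Tilde{\mu}^\varepsilon_{t\wedge\cdot}$ and $(\Tilde{\Lambda}^\varepsilon)^t$ are measurable functions of $(\Tilde{\mu}_{t\wedge\cdot},\Tilde{\mu}'_{t\wedge\cdot},\Tilde{\Pi}^t)$. To make this rigorous, I would realize $(\Tilde{\Lambda}^{\varepsilon})^t$ as the pushforward of $\Tilde{\Pi}^t$ under $(s,m,m')\mapsto(s,\tau_\varepsilon(m,m'))$, with the convention $e_0\mapsto$ the fixed point. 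Since $\Tilde{B}$ is an $\Tilde{\mathbb F}'$-Wiener process by Definition \ref{def:measure_control2}\ref{item:state_martingale'}, it remains a Wiener process for the smaller filtration, and this completes the proof.
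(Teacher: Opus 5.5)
Your proposal is correct and takes essentially the same route as the paper. You verify each condition of Definitions \ref{def:measure_control} and \ref{def:measure_control2} for the pushforward measures, using the affine dependence of $N_t(f,\cdot,\cdot)$ for the martingale condition and the fact that $\Tilde{\Pi}_t$ charges both marginal constraints simultaneously for the marginal condition. You also spell out the adaptedness of $(\Tilde{\mu},\Tilde{\mu}',\Tilde{\Pi},\Tilde{B})\mapsto(\Tilde{\mu}^{\varepsilon},\Tilde{\Lambda}^{\varepsilon},\Tilde{B})$ to get the $\Tilde{\mathbb{F}}$-Wiener property under $\kappa_\varepsilon(\Tilde{\PP})$, a step the paper leaves implicit by only noting that the law of $\Tilde{B}$ is preserved.
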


The map $\iota$ duplicates the representative player’s state and control distributions to form the population distribution.
The following remark shows that the interpolation $(\kappa_\varepsilon)_{\varepsilon \in [0,1]}$ is consistent with the interpolation in \cite{hofer2026optimal}  when restricted to strict controls.

\begin{remark}
   [Consistency with \cite{hofer2026optimal}  under strict controls]
\label{rmk:consistency_strict_control}
 Recall that \cite{hofer2026optimal} studies potential MFGs  with strict controls by constructing random interpolations between two controls.
Specifically, consider an MFG on a  filtered probability space $(\Omega,\mathcal{F},\mathbb{F},\mathbb{P})$, with   $  \mathcal{G}\subset \mathcal{F}$ being a $\sigma$-algebra  generated by the common noise $B$. Let $\mathcal{A}$ be a set of $\mathbb{F}$-predictable $A$-valued processes representing the representative player’s admissible controls, and 
for each $u \in \mathcal{A}$, let $X^u$ be the representative player’s state process controlled by $u$.
For any  $u,u'\in \cA$,   consider 
their interpolation $(u^\varepsilon)_{\varepsilon\in [0,1]}$,
where for each $\varepsilon$, $u^\varepsilon \coloneqq u' 1_{\eta=0}
+u 1_{\eta=1}$ is defined
  on an extended space $(\hat \Omega, \hat \cF, \hat{\PP}^{\varepsilon})\coloneqq ( \Omega\times\{0,1\}, \cF\otimes\cF^{\diamond},\hat{\PP}^{\varepsilon})$, 
  $\cF^{\diamond}$ is the power set of $\{0,1\}$, and  $\eta $
is a Bernoulli random variable, independent of $\mathcal F$, such that $\mathbb{P}^{\varepsilon}(\eta_i=0)=\varepsilon$. Let  $ \hat{\cG} \coloneqq \cG \otimes \{\emptyset,  \{0,1\}\}$ be the extension of  $\cG$ on $\hat \Omega$.
Then $\cL^{\hat{\PP}^{\varepsilon}}(X_t^{u^{\varepsilon}},u_t^{\varepsilon}\vert\hat{\cG}) = \varepsilon \cL^{{\PP}}(X_t^{u'},u_t'\vert\cG) + (1-\varepsilon)\cL^{{\PP}}(X_t^u,u_t\vert\cG)$ (see  \cite[Lemma 2]{hofer2026optimal}).

The joint distribution of $\cL^{\hat{\PP}^{\varepsilon}}(X_t^{u^{\varepsilon}},u_t^{\varepsilon}\vert\hat{\cG})$ and $B$ is a special case of
the interpolation map $\kappa_\varepsilon$ in Lemma \ref{lemma:mfg_mfc}.
In fact,
by setting 
\begin{equation*}
\begin{aligned}
& \Tilde{\PP}\coloneqq \PP\circ\left(  (\cL^{\PP}(X_t^u\vert\cG))_{t\in[0,T]}, \delta_{\cL^{\PP}(X_t^u,u_t\vert\cG)}(dm)dt, B\right)^{-1} ,\\
& \Tilde{\PP}'\coloneqq \PP\circ\left( (\cL^{\PP}(X_t^{u'}\vert\cG))_{t\in[0,T]}, (\cL^{\PP}(X_t^u\vert\cG))_{t\in[0,T]}, \delta_{\cL^{\PP}(X_t^{u'},u_t'\vert\cG)}(dm')\delta_{\cL^{\PP}(X_t^u,u_t\vert\cG)}(dm)dt, B\right)^{-1},
\end{aligned}
\end{equation*}
 it holds
 for all 
$\varepsilon\in(0,1)$,
\begin{equation*}
\begin{aligned}
\kappa_{\varepsilon}(\Tilde{\PP}') =& \PP\circ\left( (\varepsilon\cL(X_t^{u'}\vert\cG) + (1-\varepsilon)\cL(X_t^u\vert\cG) )_{t\in[0,T]}, \delta_{\varepsilon \cL(X_t^{u'},u_t'\vert\cG) + (1-\varepsilon)\cL(X_t^u,u_t\vert\cG)}(dm)dt, B\right)^{-1} \\
=& \hat{\PP}^{\varepsilon}\circ\left( (\cL^{\hat{\PP}^{\varepsilon}}(X_t^{u^{\varepsilon}}\vert\hat{\cG}))_{t\in[0,T]}, \delta_{ \cL^{\hat{\PP}^{\varepsilon}}(X_t^{u^{\varepsilon}},u_t^{\varepsilon}\vert\hat{\cG}) }(dm)dt, B \right)^{-1}.
\end{aligned}
\end{equation*}
 
\end{remark}

We are now ready to present the main result of this section, which shows that  the objective of the MFC problem \eqref{eq:mf_potential_measure} is a potential function for the MFG \eqref{eq:mfg_measure} provided that  $\lim_{N\to \infty}\alpha_N=0$.
We refer  to Proposition \ref{prop:alpha_potential_MFG} for more general cases where  
 $\lim_{N\to \infty}\alpha_N>0$.

\begin{theorem}\label{thm:MFG_potential}
Suppose Assumption \ref{assumption:mean_field}
and any of the equivalent conditions in Theorem \ref{thm:alpha0_equiv} hold.  
The functional 
$\Tilde{\Phi}:\mathcal{P}_2(\Tilde{\Omega})\rightarrow\mathbb{R}$
given by \eqref{eq:mf_potential_measure}
is a potential function for the MFG  \eqref{eq:mfg_measure}, 
in the sense that 
for all $\Tilde{\PP}\in\cP_V(\nu)$ and 
  $\Tilde{\PP}'\in \cP_V'(\nu)$ such that $  \Tilde{\PP}'\circ(\Tilde{\mu},\Tilde{\Lambda},\Tilde{B})^{-1} = \Tilde{\PP}$, 
\begin{equation}
\label{eq:mf_potential}
\Tilde{J}(\Tilde{\PP}')-\Tilde{J}(\iota(\Tilde{\PP})) =\lim_{\varepsilon\downarrow 0} \frac{\Tilde{\Phi}(\kappa_{\varepsilon}(\Tilde{\PP}')) - \Tilde{\Phi}(\Tilde{\PP})}{\varepsilon}.
\end{equation}
Hence if 
$\Tilde{\PP}^*\in\cP_V(\nu)$ 
   satisfies 
$\Tilde{\Phi}(\Tilde{\PP}^*) \leq \inf_{\Tilde{\PP}\in\cP_V(\nu)}\Tilde{\Phi}(\Tilde{\PP})$, then $\iota(\Tilde{\PP}^*)$ is an  MFE of   the MFG \eqref{eq:mfg_measure}.

\end{theorem}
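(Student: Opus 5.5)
The plan is to reduce \eqref{eq:mf_potential} to a pointwise (in $\omega$ and $t$) first-order expansion of the integrands of $\Tilde{\Phi}$ along the linear interpolation $\tau_\varepsilon$. By Theorem \ref{thm:alpha0_equiv}, there exist $F,G$ with $\partial_{(x,a)}f=\partial_\nu F$ and $\partial_x g=\partial_\mu G$. By Theorem \ref{thm:construction_F_G}, choosing $\tilde F=0$, $\nu_0=\delta_0$ and the constant-speed curve $r\mapsto m\circ T_r^{-1}$, whose velocity is $w_r(x,a)=(x,a)/r$ evaluated at the pushforward, we obtain for each $m\in\cP_2(\RR^n\times A)$
\begin{equation*}
F(t,m)=\int F^\infty(t,x,a,m)\,m(dx,da),\qquad G(\mu)=\int G^\infty(x,\mu)\,\mu(dx).
\end{equation*}
Hence $\Tilde{\Phi}(\Tilde{\PP})=\EE^{\Tilde{\PP}}[\int_0^T\int F(t,m)\Tilde{\Lambda}_t(dm)dt+G(\Tilde{\mu}_T)]$. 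I would verify this identity directly by a change of variables $r\mapsto$ pushforward, which matches \eqref{eq:mf_potential_cost} with the statement of Theorem \ref{thm:construction_F_G}.

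Next, by Lemma \ref{lemma:mfg_mfc}\ref{item:perturbation}, $\kappa_\varepsilon(\Tilde{\PP}')\in\cP_V(\nu)$ and
\begin{equation*}
\Tilde{\Phi}(\kappa_\varepsilon(\Tilde{\PP}'))=\EE^{\Tilde{\PP}'}\Big[\int_0^T\int F(t,(1-\varepsilon)m+\varepsilon m')\,\Tilde{\Pi}_t(dm,dm')dt+G((1-\varepsilon)\Tilde{\mu}_T+\varepsilon\Tilde{\mu}'_T)\Big],
\end{equation*}
while $\Tilde{\Phi}(\Tilde{\PP})$ is the same expression at $\varepsilon=0$, since $\Tilde{\PP}'\circ(\Tilde{\mu},\Tilde{\Lambda},\Tilde{B})^{-1}=\Tilde{\PP}$. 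Using the linear functional derivative along the segment,
\begin{equation*}
F(t,m^\varepsilon)-F(t,m)=\varepsilon\int_0^1\int\frac{\delta F}{\delta\nu}(t,m^{s\varepsilon})(x,a)\,(m'-m)(dx,da)\,ds,
\end{equation*}
and similarly for $G$. Dividing by $\varepsilon$ and letting $\varepsilon\downarrow0$ via dominated convergence gives
\begin{equation*}
\EE\Big[\int_0^T\int\Big\langle \tfrac{\delta F}{\delta\nu}(t,m),m'-m\Big\rangle\,\Tilde{\Pi}_t\,dt+\Big\langle\tfrac{\delta G}{\delta\mu}(\Tilde{\mu}_T),\Tilde{\mu}'_T-\Tilde{\mu}_T\Big\rangle\Big].
\end{equation*}
The domination uses the growth bounds in $\cC^{0,2,2}$, the compactness of $A$, and the uniform second moments from $\Tilde{\PP}'\in\cP_2$.

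The key identification is that $\frac{\delta F}{\delta\nu}(t,m)(x,a)=f(t,x,a,m)+c(t,m)$. Indeed, $\partial_{(x,a)}\frac{\delta F}{\delta\nu}=\partial_\nu F=\partial_{(x,a)}f$ on $\RR^{n+k}$, so the two functions differ by a constant in $(x,a)$. That constant is annihilated when paired with $m'-m$, because both are probability measures. The limit therefore equals
\begin{equation*}
\EE\Big[\int\!\!\int(\langle f(t,\cdot,m),m'\rangle-\langle f(t,\cdot,m),m\rangle)\Tilde{\Pi}_t\,dt+\langle g(\cdot,\Tilde{\mu}_T),\Tilde{\mu}'_T\rangle-\langle g(\cdot,\Tilde{\mu}_T),\Tilde{\mu}_T\rangle\Big]=\Tilde{J}(\Tilde{\PP}')-\Tilde{J}(\iota(\Tilde{\PP})).
\end{equation*}
The second term is $\Tilde{J}(\iota(\Tilde{\PP}))$ because $\iota$ puts $\Tilde{\Pi}_t=\Tilde{\Lambda}_t(dm)\delta_m(dm')$ and sets $\Tilde{\mu}'=\Tilde{\mu}$, and the law of $(\Tilde{\mu},\Tilde{\Lambda})$ is $\Tilde{\PP}$. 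This proves \eqref{eq:mf_potential}.

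For the corollary, take $\Tilde{\PP}^*$ a minimizer and $\iota(\Tilde{\PP}^*)$. By Lemma \ref{lemma:mfg_mfc}\ref{item:embedding}, $\iota(\Tilde{\PP}^*)$ is admissible and consistent. Any competitor $\Tilde{\PP}'$ with the same population marginal satisfies $\Tilde{\PP}'\circ(\Tilde{\mu},\Tilde{\Lambda},\Tilde{B})^{-1}=\Tilde{\PP}^*\in\cP_V(\nu)$, so $\kappa_\varepsilon(\Tilde{\PP}')\in\cP_V(\nu)$ and $\Tilde{\Phi}(\kappa_\varepsilon(\Tilde{\PP}'))\ge\Tilde{\Phi}(\Tilde{\PP}^*)$. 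The right side of \eqref{eq:mf_potential} is then nonnegative, which is the optimality condition.

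The main obstacle I expect is the first step: rigorously identifying $\Tilde{\Phi}$ with the $F,G$ of Theorem \ref{thm:alpha0_equiv}, including the existence and regularity of $\frac{\delta F}{\delta\nu}$ at measures $m$ supported on $\RR^n\times A$. I would handle this by invoking Theorem \ref{thm:construction_F_G} with the explicit dilation curve, checking that its velocity lies in the tangent space; if needed, I would instead differentiate $F^\infty$ directly.
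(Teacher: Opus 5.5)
Your proposal is correct and follows essentially the same route as the paper: rewrite $\Tilde{\Phi}$ in terms of $F(t,m)=\int F^{\infty}(t,x,a,m)\,m(dx,da)$ and $G(\mu)=\int G^{\infty}(x,\mu)\,\mu(dx)$, which is the dilation-curve potential of Theorem~\ref{thm:construction_F_G}. You then differentiate along $\kappa_\varepsilon$ by exchanging the limit and the expectation, and identify $\frac{\delta F}{\delta\nu}$ with $f$ up to an $(x,a)$-independent term that $m'-m$ annihilates. The only differences are cosmetic: you justify that identification by a gradient argument and the limit by dominated convergence on the integral form of the linear derivative, whereas the paper cites \cite[Proposition 5.51]{carmona2018probabilistic1} and the Leibniz rule.
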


Condition \eqref{eq:mf_potential} says that  the perturbation of the representative player’s objective in the mean-field setting equals the \emph{derivative} of the mean-field potential function along the interpolation path $(\kappa_\varepsilon)_{\varepsilon \in [0,1]}$. It is consistent with the classical notion of potential MFGs \cite{carmona2018probabilistic1}, where  potential functions are defined as   antiderivatives of the running and terminal cost functions (cf.~\eqref{eq:carmona_condition} and \eqref{eq:f_linear}). It extends \cite[Lemma 3]{hofer2026optimal} 
from the setting of strict controls to that of measure-valued controls.
As explained below, it arises naturally as the limit of the $N$-player $\alpha_N$-potential condition \eqref{eq:N_player_alpha_PG}.

\begin{remark}[Condition \eqref{eq:mf_potential} as the limit of   $N$-Player $\alpha$-potential game]
\label{rmk:pmfg_definition}

To simplify the presentation, we focus on potential MFGs under strict controls. 
Let $\Phi$ be the $\alpha_N$-potential function for $N$-player games constructed in Theorem \ref{thm:alpha2}.
Setting $\Phi^N=\Phi/N$ as in \eqref{eq:scaled_N_potential} yields 
\begin{equation}\label{eq:N_potential}
\sup_{i\in[N]}\sup_{u_i'\in\cA_N,\bu\in\cA_N^N} \left\vert \frac{1}{N}\big(J_i(u_i', \bu_{-i})-J_i(\bu)\big) - \left(\Phi^N(u_i', \bu_{-i})-\Phi^N(\bu) \right)   \right\vert \leq \frac{\alpha_N}{N}.
\end{equation}
Assume that $\lim_{N \to \infty} \alpha_N = 0$ and that   appropriate propagation of chaos results  hold  for $(J_i)_{i \in [N]}$ toward $J$ and for $\Phi^N$ toward $\Phi^\infty$. We aim to show that \eqref{eq:N_potential} naturally yields, 
for all $u',u\in \cA$,
\begin{equation}
\label{eq:mf_potential_discussion}
J(u';\nu^u) - J(u;\nu^u) = \lim_{\varepsilon\downarrow 0}\frac{\Phi^{\infty}(u^{\varepsilon}) - \Phi^{\infty}(u)}{\varepsilon},
\end{equation}
where $\nu^u=(\cL(X_t^u, u_t\vert\cG_t))_{t\in [0,T]}$
and $u^{\varepsilon} = u 1_{\eta=1} + u'1_{\eta=0}$, with $\eta$  being an independent  $\{0,1\}$-valued random variable  satisfying  $\mathbb{P}(\eta=0)=\varepsilon$. Note that \eqref{eq:mf_potential_discussion} was introduced in \cite[Lemma 3]{hofer2026optimal} to analyze  potential  MFGs with strict controls, which is a special case of \eqref{eq:mf_potential} by Remark~\ref{rmk:consistency_strict_control}.

To this end, 
 consider 
 $u,u'\in \cA$ such that $u_t=\phi(t,\xi,W_{t\wedge\cdot},B_{t\wedge\cdot})$  and $u_t'=\phi'(t,\xi,W_{t\wedge\cdot},B_{t\wedge\cdot})$ for all $t\in [0,T]$, where $\phi$ and $\phi'$ are measurable  functions. 
For each $N\in \NN^*$ and $i\in [N]$,
let $u_{i,t} = \phi(t,\xi_i,W_{t\wedge\cdot,i},B_{t\wedge\cdot})$,  $u_{i,t}'=\phi'(t,\xi_i,W_{t\wedge\cdot,i},B_{t\wedge\cdot})$,
and  $\bu =(u_i)_{i\in [N]}$.  
The propagation of chaos of $(J_i)_{i \in [N]}$ to  $J$  (see e.g.~\cite[Proposition 3.16]{djete2023mean}) implies that
\begin{align}\label{eq:limit_mfg}
\begin{split}
&J(u';\nu^u)-J(u;\nu^u) 
\\
&= \lim_{N\rightarrow\infty} \frac{1}{N}\sum_{i\in [N]} [J_i(u_i', \bu_{-i})-J_i(\bu)]
=\lim_{N\rightarrow\infty} \sum_{i\in [N]} [\Phi^N(u_i', \bu_{-i})-\Phi^N(\bu)].
\end{split}
\end{align}
where the last identity used  \eqref{eq:N_potential}
and $\lim_{N\to \infty}\alpha_N=0$.
We now rewrite the right-hand side of \eqref{eq:limit_mfg}
as a derivative of $\Phi^N$ along random perturbations. 
Let $(\eta_i)_{i\in\mathbb{N}^*}$ be a sequence of i.i.d.~$\{0,1\}$-valued random variables with $\mathbb{P}(\eta_i=0)=\varepsilon$, and define $u_i^{\varepsilon} = u_i 1_{\eta_i=1} + u_i'1_{\eta_i=0}$ for all $i\in [N]$. 
Using the form of $\Phi^N$ in \eqref{eq:scaled_N_potential} and the   independence of $(\eta_i)_{i\in\NN^*}$,
\begin{align*}
 \Phi^N(\bu^{\varepsilon}) &= \sum_{D\subset [N]}\varepsilon^{|D|}(1-\varepsilon)^{N-|D|}\Phi^N\left( (u_j')_{j\in D}, (u_i)_{i\notin D} \right) \\
&=  (1-\varepsilon)^N \Phi^N(\bu) + \sum_{i\in [N]}\varepsilon(1-\varepsilon)^{N-1}\Phi^N\left( u_i',\bu_{-i} \right) \\
&\quad+ \sum_{D\subset [N],|D|\geq 2}\varepsilon^{|D|}(1-\varepsilon)^{N-|D|}\Phi^N\left( (u_j')_{j\in D}, (u_i)_{i\notin D} \right),
\end{align*}
which along with    $\sum_{D\subset [N]}\varepsilon^{|D|}(1-\varepsilon)^{N-|D|}=1$ implies that 
\begin{align}\label{eq:phi_deriv_N1}
&\lim_{\varepsilon\downarrow 0}\frac{\Phi^N(\bu^{\varepsilon})-\Phi^N(\bu) }{\varepsilon}  =    \sum_{i\in [N]}[\Phi^N\left( u_i',\bu_{-i} \right) -\Phi^N\left( \bu\right) ].
\end{align}
Assume that the propagation of chaos of   $(\Phi^N)_{N\in \NN^*}$ to  $\Phi^\infty$  satisfies   that 
$\lim_{N\rightarrow\infty} \lim_{\varepsilon\downarrow 0}\frac{\Phi^N(\bu^{\varepsilon})-\Phi^N(\bu) }{\varepsilon}  = \lim_{\varepsilon\downarrow 0} \frac{\Phi^{\infty}(u^{\varepsilon})-\Phi^{\infty}(u)}{\varepsilon}.
$
Then combining \eqref{eq:limit_mfg} and \eqref{eq:phi_deriv_N1} yields \eqref{eq:mf_potential_discussion}, thereby justifying the definition    \eqref{eq:mf_potential} of   mean field   potential functions. 
\end{remark}

Combining  Theorems \ref{thm:mean_field_limit2} and  \ref{thm:MFG_potential},
the following corollary shows that any sequence of approximate NEs of $N$-player games that minimize the $\alpha_N$-potential functions \eqref{eq:scaled_N_potential} converges, up to subsequences, to an MFE of the  MFG \eqref{eq:mfg_measure}.

\begin{corollary}\label{cor:mfg_poc}
Assume the same conditions of 
Theorems \ref{thm:mean_field_limit2} and  \ref{thm:MFG_potential}.  Let $(\varepsilon_N)_{N\in\mathbb{N}^*}\subset  (0,\infty)$ satisfy $\lim_{N\rightarrow\infty}\varepsilon_N=0$, 
and for each $N\in \NN^*$, let $\bu^N \in \cA_N^N$ be an $(\alpha_N+\varepsilon_N)$-NE
of the $N$-player game \eqref{eq:N_player_cost} satisfying 
$$
\Phi^N(\bu^N;\nu_1,\cdots,\nu_N)\leq V^N(\nu_1,\cdots,\nu_N)+\frac{\varepsilon_N}{N},
$$
and define  $\Tilde{\mathbb{P}}^N\coloneqq \Tilde{\mathbb{P}}^N(\bu^N)$ as in \eqref{eq:P_u_N}. 
Then $(\Tilde{\mathbb{P}}^N)_{N\in\mathbb{N}^*}\subset \mathcal P_2(\Tilde{\Omega})$ is relatively compact, and 
 for any convergent subsequence $(\Tilde{\mathbb{P}}^{N_m})_{m\in \mathbb{N}^*}$,
 there exists $ \nu\in \mathcal{P}_p(\mathbb{R}^n)$ and $\Tilde{\mathbb{P}}^{\infty} \in {\mathbb{P}}_V^*(\nu)$ such that  
$\lim_{m\rightarrow\infty} \mathcal{W}_2\left(\Tilde{\mathbb{P}}^{N_m},\Tilde{\mathbb{P}}^{\infty}\right)=0,
 $
 and  $\iota(\Tilde{\mathbb{P}}^{\infty})$ is an MFE  of the MFG \eqref{eq:mfg_measure} with initial condition $\nu$.
\end{corollary}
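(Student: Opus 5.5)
The corollary follows by combining Theorem \ref{thm:mean_field_limit2}\ref{item:converge_minimizer} with Theorem \ref{thm:MFG_potential}. No new estimates are needed, only bookkeeping of the tolerances.

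First, note that $\frac{\varepsilon_N}{N}\le \varepsilon_N$. Hence $\bu^N$ satisfies $\Phi^N(\bu^N;\nu_1,\dots,\nu_N)\le V^N(\nu_1,\dots,\nu_N)+\varepsilon_N$ with $\varepsilon_N\to0$, which is exactly the hypothesis of Theorem \ref{thm:mean_field_limit2}\ref{item:converge_minimizer}. (We could even use the sharper tolerance $\varepsilon_N/N$, but it is not needed.) That theorem gives relative compactness of $(\Tilde{\PP}^N)_N$ in $\cP_2(\Tilde\Omega)$. It also gives, for any convergent subsequence $(\Tilde{\PP}^{N_m})_m$, a limit $\nu\in\cP_p(\RR^n)$ of the averaged initial laws and a limit $\Tilde{\PP}^\infty\in\cP_V^*(\nu)$ with $\cW_2(\Tilde{\PP}^{N_m},\Tilde{\PP}^\infty)\to0$.

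The claim that $\bu^N$ is an $(\alpha_N+\varepsilon_N)$-NE is consistent with Proposition \ref{prop:alpha_PG_Nplayer} and is part of the hypotheses. The unnormalized potential $\Phi=N\Phi^N$ is $\varepsilon_N$-minimized, and by Theorem \ref{thm:alpha2} it is an $\alpha_N$-potential, so $\bu^N$ is an $(\alpha_N+\varepsilon_N)$-NE. Nothing further is needed for the conclusion.

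Second, since $\Tilde{\PP}^\infty\in\cP_V^*(\nu)$, we have $\Tilde\Phi(\Tilde{\PP}^\infty)=V_V(\nu)=\inf_{\Tilde\PP\in\cP_V(\nu)}\Tilde\Phi(\Tilde\PP)$. Theorem \ref{thm:MFG_potential} applies because we assume its hypotheses: Assumption \ref{assumption:mean_field} and the equivalent conditions of Theorem \ref{thm:alpha0_equiv}. Its last statement then gives that $\iota(\Tilde{\PP}^\infty)$ is an MFE of the MFG \eqref{eq:mfg_measure} with initial condition $\nu$.

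For completeness, here is how that implication works. By Lemma \ref{lemma:mfg_mfc}\ref{item:embedding}, $\iota(\Tilde{\PP}^\infty)\in\cP_V'(\nu)$ and it is consistent. Now take any competitor $\Tilde{\PP}'\in\cP_V'(\nu)$ with the same population marginal. Lemma \ref{lemma:mfg_mfc}\ref{item:perturbation} gives $\kappa_\varepsilon(\Tilde{\PP}')\in\cP_V(\nu)$, so $\Tilde\Phi(\kappa_\varepsilon(\Tilde{\PP}'))\ge\Tilde\Phi(\Tilde{\PP}^\infty)$. Identity \eqref{eq:mf_potential} then yields $\Tilde J(\Tilde{\PP}')-\Tilde J(\iota(\Tilde{\PP}^\infty))\ge0$, which is the optimality condition.

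The only point requiring care is to check that the initial measure $\nu$ produced by Theorem \ref{thm:mean_field_limit2} lies in $\cP_2(\RR^n)$, so that Theorem \ref{thm:MFG_potential} applies with this $\nu$. This holds since $\nu\in\cP_p$ with $p>2$. Everything else is a direct chaining of the two theorems.
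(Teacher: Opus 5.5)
Your proposal is correct and follows the paper's argument: the paper obtains the corollary by chaining Theorem~\ref{thm:mean_field_limit2}\ref{item:converge_minimizer} (applicable since $\varepsilon_N/N\le\varepsilon_N$) with the final assertion of Theorem~\ref{thm:MFG_potential}, applied to the limit $\Tilde{\mathbb{P}}^{\infty}\in\cP_V^*(\nu)$ with $\nu\in\cP_p(\RR^n)\subset\cP_2(\RR^n)$. Your unpacking of the optimality condition via Lemma~\ref{lemma:mfg_mfc} and \eqref{eq:mf_potential} is also accurate, including the check that a competitor's population marginal equals $\Tilde{\mathbb{P}}^{\infty}\in\cP_V(\nu)$, which gives $\kappa_\varepsilon(\Tilde{\mathbb{P}}')\in\cP_V(\nu)$.
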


By the notion of  $\alpha$-potential functions, 
Corollary \ref{cor:mfg_poc}
 establishes propagation of chaos for $N$-player games in a general setting involving controlled diffusions, common noise, and non-separable state–control interactions,
   beyond the scope of existing frameworks   
\cite{carmona2015probabilistic,lauriere2022convergence,djete2023mean,possamai2025non,cecchin2026convergence,djete2026approximate}. 
  This shows that 
  NEs selected through minimization of the $\alpha$-potential functions form a more stable class of equilibria, 
generalizing similar observations for specific classes of potential MFGs in 
\cite{delarue2020selection,cecchin2022selection}.

\subsection{Examples}
\label{sec:example_mfg}

Theorems \ref{thm:alpha0_equiv},
\ref{thm:construction_F_G}, 
and \ref{thm:MFG_potential}
allow for identifying  explicit structural conditions 
on the cost functions $f$ and $g$
such that the MFG \eqref{eq:mfg_measure} is potential, beyond those   in the existing literature.

\begin{example}[Symmetric  interaction kernels]
Suppose that  $f$ and $g$ are of the form: 
\begin{equation*}
f(t,x,a,\nu) = f_0(t,x,a) + \int_{\RR^n\times\RR^k} h(t,x,a,x',a')\nu(dx',da'),\quad g(x,\mu) = g_0(x) + \int_{\RR^n} k(x,x')\mu(dx'),
\end{equation*}
where for all $t\in [0,T]$, $h(t,\cdot)$ and $k$ are twice continuously differentiable, and for all $t\in[0,T]$, $x,x'\in\RR^n$, and $u,u'\in\RR^k$, 
$$
h(t,x,a,x',a')=h(t,x',a',x,a), \quad  k(x,x')=k(x',x).$$ 
Then $f$ and $g$ satisfy Condition \eqref{eq:curl_free},
and hence the MFG \eqref{eq:mfg_measure} is potential.

This generalizes existing examples based on control-independent convolution kernels, where  
$h(t,x,a,x',a')=\hat h(t,x-x')$
and $k(x,x')=\hat k(x-x')$, as in  the 
Kuramoto synchronization model 
\cite{carmona2023synchronization},
the   Cucker-Smale flocking model 
\cite{santambrogio2021cucker},
and the pedestrian crowd model   \cite{aurell2018mean}.
\end{example}

\begin{example}[Nonlinear aggregated  interaction]
Suppose that $f$ and $g$ are the following form:
\begin{equation*}
f(t,x,a,\nu) = \Phi\left(\int_{\RR^n\times\RR^k}\phi(t,x',a')\nu(dx',da') \right)\phi(t,x,a),\quad g(x,\mu) = \Psi\left(\int_{\RR^n} \psi(x')\mu(dx') \right)\psi(x),
\end{equation*}
where  $\Phi,\phi(t,\cdot),\Psi$ and $\psi$ are continuously differentiable (scalar-valued) functions. 
Then $f$ and $g$ satisfy Condition \eqref{eq:curl_free},
and hence the MFG \eqref{eq:mfg_measure} is potential.
This includes the     MFG of controls for energy producers with price interactions as in \cite[Section 6.1]{hofer2026optimal}.
\end{example}

\begin{example}[Local coupling through densities]
Suppose that  $f$ and $g$ are of the form: 
\begin{equation*}
f(t,x,a,\nu) = \hat f(t,x,a,m_{\nu}(x,a)),\quad g(x,\mu) = \hat g(x,m_{\mu}(x)).
\end{equation*}
where $m_{\nu}:\RR^n\times\RR^k\to [0,\infty)$ and $m_{\mu}:\RR^n\to [0,\infty)$ are the densities of $\nu$ and $\mu$, respectively,
and $\hat f$ and $\hat g$ are  differentiable functions. 
In the special case
where   $f$ depends only on the state density and has a separable structure, namely
\begin{equation*}
f(t,x,a,\nu) = \hat f_1(t,x,a) + \hat f_2(t,x,m_{\nu}^x(x)),
\end{equation*}
where $m_{\nu}^x$ is the density of the state law, 
these cost functions are used  in pedestrian crowd models 
\cite{lachapelle2011mean,santambrogio2020lecture}.

To see that the resulting MFG 
\eqref{eq:mfg_measure} is potential, 
we verify \cite[Equation (2.44)]{graber2025remarks},
an analogue of Condition~\eqref{eq:curl_free} using the linear functional derivative with respect to measures. 
Let $\xi:\RR^n\to [0,\infty)$ be a smooth  function with compact support satisfying $\int_{\RR^n}\xi(x)dx=1$, and define for each $\varepsilon>0$, $\xi_{\varepsilon}(x)\coloneqq \frac{1}{\varepsilon^n} \xi(\frac{x}{\varepsilon})$ for all $x\in \RR^n$, and   the smoothed cost function $g^{\varepsilon}:\RR^n\times \cP_2(\RR^n)\to\RR$ such that   for all $(x,\mu)\in \RR^n\times\cP_2(\RR^n)$,
\begin{equation*}
g^{\varepsilon}(x,\mu)\coloneqq \hat g\left(x, \int_{\RR^n}\xi_{\varepsilon}(x-y)\mu(dy) \right).
\end{equation*}
A direct computation shows that
\begin{equation*}
\frac{\delta g^{\varepsilon}}{\delta \mu}(x,\mu)(x') = \partial_m \hat g\left(x, \int_{\RR^n}\xi_{\varepsilon}(x-y)\mu(dy) \right) \xi_{\varepsilon}(x-x'),
\end{equation*}
which implies that for all $\phi\in C_c^{\infty}(\RR^n\times\RR^n)$,
\begin{align*}
& \lim_{\varepsilon\to 0}\int_{\RR^n\times\RR^n} \frac{\delta g^{\varepsilon}}{\delta \mu}(t,x,\mu)(x')\phi(x,x')dxdx' = \int_{\RR^n} \partial_m \hat g(x,m_{\mu}(x))\phi(x,x)dx \\
& \quad = \lim_{\varepsilon\to 0}\int_{\RR^n\times\RR^n} \frac{\delta g^{\varepsilon}}{\delta \mu}(t,x',\mu)(x)\phi(x,x')dxdx'.
\end{align*}
This suggests $g$ satisfies  \cite[Equation (2.44)]{graber2025remarks}    in the weak sense. Define $G:\cP_2(\RR^n)\to\RR$ such that for all $\mu\in\cP_2(\RR^n)$,
\begin{equation*}
G(\mu)\coloneqq \int_{\RR^n} \int_0^{m_{\mu}(x)} \hat g(x,m)dm dx,
\end{equation*}
Then one can show that     
$
\frac{\delta G}{\delta\mu}(\mu)(x) = \hat g(x,m(x)) = g(x,\mu).
$ 
Similarly, define $F:[0,T]\times \cP_2(\RR^n\times\RR^k)\to\RR$ such that  for all $(t,\nu)\in [0,T]\times \cP_2(\RR^n\times\RR^k)$,
\begin{equation*}
F(t,\nu) \coloneqq  \int_{\RR^n\times\RR^k} \int_0^{m_{\nu}(x,a)} \hat f (t,x,a,m)dm d(x,a),
\end{equation*}
then $F$ is a potential function for $f$.

\end{example}

\section{Extensions and Discussions}\label{sec:extensions} 

\subsection{
Objectives   of MFC problems as $\alpha$-potential functions for MFGs}

The following proposition extends  Theorem \ref{thm:MFG_potential} by dropping the assumption  $\lim_{N\to \infty}\alpha_N=0$,   
and shows that   the objective of the MFC problem \eqref{eq:mf_potential_measure} is an approximate  potential function for the MFG \eqref{eq:mfg_measure}.
This result is the mean field analogue of Theorem~\ref{thm:alpha2}. 

\begin{proposition}
\label{prop:alpha_potential_MFG}
Suppose Assumption \ref{assumption:mean_field} holds.  
The functional 
$\Tilde{\Phi}:\mathcal{P}_2(\Tilde{\Omega})\rightarrow\mathbb{R}$
given by \eqref{eq:mf_potential_measure}
is an $\alpha_{\infty}$-potential function for the MFG  \eqref{eq:mfg_measure}, 
in the sense that 
for all $\Tilde\PP\in\cP_V(\nu)$ and $\Tilde\PP'\in\cA(\Tilde\PP)\coloneqq \left\{\Tilde\PP'\in\cP_V'(\nu) \mid \Tilde\PP'\circ(\Tilde\mu,\Tilde\Lambda,\Tilde B)^{-1}=\Tilde \PP\right\}$,
\begin{equation}
\label{eq:alpha_potential_MFG}
\left\vert \Tilde{J}(\Tilde{\PP}')-\Tilde{J}(\iota(\Tilde{\PP})) -\lim_{\varepsilon\downarrow 0} \frac{\Tilde{\Phi}(\kappa_{\varepsilon}(\Tilde{\PP}')) - \Tilde{\Phi}(\Tilde{\PP})}{\varepsilon} \right\vert \leq \alpha_{\infty},
\end{equation}
with
\begin{equation*}
\begin{aligned}
\alpha_{\infty} \leq& \frac{1}{2}\sup_{\Tilde{\PP}\in\cP_V(\nu)}\sup_{\Tilde{\PP}'\in \cA(\Tilde{\PP})} \mathbb{E}^{\Tilde{\mathbb{P}}'}\bigg[ \Vert\partial_{\nu}\partial_{(x,a)} \Delta f\Vert_{\infty} \int_0^T \int_{\cP_2(\RR^n\times A)^2} \cW_2(m,\delta_0)\cW_2(m,m')\Tilde{\Pi}_t(dm,dm') dt \\
& + \Vert \partial_{\mu}\partial_x \Delta  g\Vert_{\infty} \cW_2(\Tilde{\mu}_T,\delta_0) \cW_2(\Tilde{\mu}_T,\Tilde{\mu}_T') \bigg],
\end{aligned}
\end{equation*}
where the constants 
$\Vert \partial_{\nu}\partial_{(x,a)}\Delta f\Vert_{\infty}$ and 
$\Vert \partial_{\mu}\partial_x \Delta g\Vert_{\infty} $
are given by
\begin{equation*}
\begin{aligned}
\Vert \partial_{\nu}\partial_{(x,a)}\Delta  f\Vert_{\infty} \coloneqq & \esssup_{t\in[0,T]}\sup_{(x,a),(x,a')\in \mathbb{R}^n\times \mathbb{R}^k, \nu\in \cP_2(\mathbb{R}^n\times \mathbb{R}^k)}\vert \partial_{\nu}\partial_{(x,a)}f(t,x,a,\nu)(x',a')\\
&-[\partial_{\nu}\partial_{(x,a)}f(t,x',a',\nu)(x,a)]^{\top} \vert,\\
\Vert \partial_{\mu}\partial_x \Delta g\Vert_{\infty} \coloneqq & \sup_{(x,x')\in\mathbb{R}^n, \mu\in \cP_2(\mathbb{R}^n)} \vert \partial_{\mu}\partial_x g(x,\mu)(x')-[\partial_{\mu}\partial_x g(x',\mu)(x)]^{\top} \vert.
\end{aligned}
\end{equation*}

Consequently, if  $\PP^*\in\cP_V(\nu)$ is a  minimizer of $\Tilde\Phi$, then $\iota(\PP^*)$ is an $\alpha_{\infty}$-MFE. That is, $\iota(\PP^*)$
satisfies the $\alpha_\infty$-suboptimality condition 
that  $\Tilde J(\PP^*)\leq \Tilde J(\Tilde\PP')+\alpha_{\infty}$ for all $\Tilde\PP'\in \cA(\PP^*)$,  and  the consistency condition in Item \ref{item:consistency} of Definition \ref{def:MFG_solution}.

\end{proposition}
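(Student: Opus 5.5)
The plan is to compute the directional derivative of $\Tilde\Phi$ along $\varepsilon\mapsto\kappa_\varepsilon(\Tilde\PP')$ explicitly, and then compare it term by term with $\Tilde J(\Tilde\PP')-\Tilde J(\iota(\Tilde\PP))$.

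\textbf{Step 1: rewrite $\Tilde\Phi$ along the interpolation.} By Lemma \ref{lemma:mfg_mfc}\ref{item:perturbation}, $\kappa_\varepsilon(\Tilde\PP')\in\cP_V(\nu)$. By the definition of $\Tilde\Lambda^\varepsilon$,
\[
\Tilde\Phi(\kappa_\varepsilon(\Tilde\PP'))=\EE^{\Tilde\PP'}\Big[\int_0^T\!\!\int \mathcal F(t,\tau_\varepsilon(m,m'))\,\Tilde\Pi_t(dm,dm')dt+\mathcal G(\varepsilon\Tilde\mu'_T+(1-\varepsilon)\Tilde\mu_T)\Big],
\]
where $\mathcal F(t,m)\coloneqq\int F^\infty(t,x,a,m)\,m(dx,da)$ and $\mathcal G(\mu)\coloneqq\int G^\infty(x,\mu)\,\mu(dx)$. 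The problem is thus reduced to computing the linear-functional derivatives $\frac{\delta\mathcal F}{\delta m}$ and $\frac{\delta\mathcal G}{\delta\mu}$ along straight segments $m+\varepsilon(m'-m)$. I will differentiate under the expectation using dominated convergence. This is justified by the growth bounds in $\cC^{0,2,2}$ (item (iv)), compactness of $A$, and second moments under $\Tilde\PP'\in\cP_2$.

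\textbf{Step 2: identify $\frac{\delta\mathcal F}{\delta m}(t,m)(x,a)$ with $f(t,x,a,m)$ up to a defect.} Substituting $y=rx$ in the definition of $F^\infty$ gives
\[
\mathcal F(t,m)=\int_0^1\frac1r\int \partial_{(x,a)}f(t,z,m_r)^\top z\,m_r(dz)\,dr,\qquad m_r=m\circ T_r^{-1},
\]
which is the path integral \eqref{eq:mean_field_potential} along the dilation curve $r\mapsto m_r$ with velocity $z/r$. Differentiating in $m$, with the L-derivative chain rule, produces two pieces. The first is the term where the derivative falls on the integrand evaluated at the point $(x,a)$; integrating by parts in $r$, it becomes $f(t,x,a,m)-f(t,0,0,\delta_0)$. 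The second is a cross term involving $\partial_\nu\partial_{(x,a)}f(t,rz,m_r)(rx,ra)$ contracted with $z$ and $(x,a)$.

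In the symmetric case this cross term would combine with the first to give exactly $f$, as in the proof of Theorem \ref{thm:MFG_potential}. In general, swapping $\partial_\nu\partial_{(x,a)}f(\cdot)(x',a')$ with its transpose at the swapped arguments produces an error. The error is bounded by $\Vert\partial_\nu\partial_{(x,a)}\Delta f\Vert_\infty\int_0^1 r\,dr\,|z|\,|(x,a)|$, and the factor $\int_0^1 r\,dr=\tfrac12$ gives the $\tfrac12$ in the bound. Constants such as $f(t,0,0,\delta_0)$ cancel because $m'-m$ has zero mass. The same argument applies to $\mathcal G$ and $g$.

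\textbf{Step 3: assemble.} The derivative at $\varepsilon=0$ equals
\[
\EE^{\Tilde\PP'}\Big[\int_0^T\!\!\int\langle f(t,\cdot,m),m'-m\rangle\,\Tilde\Pi_t\,dt+\langle g(\cdot,\Tilde\mu_T),\Tilde\mu'_T-\Tilde\mu_T\rangle\Big]+R.
\]
The main term equals $\Tilde J(\Tilde\PP')-\Tilde J(\iota(\Tilde\PP))$: by Lemma \ref{lemma:mfg_mfc}\ref{item:embedding}, $\Tilde J(\iota(\Tilde\PP))$ is the same expression with $m'=m$, and the population marginals coincide.

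\textbf{Step 4: bound the remainder (main obstacle).} The remainder $R$ is an integral of the defect kernel against $(m'-m)\otimes m_r$. The hardest part is to bound it by $\cW_2(m,\delta_0)\,\cW_2(m,m')$ rather than by crude moments. I would couple $m$ and $m'$ optimally by $\pi$ and write $\int h\,d(m'-m)=\int[h(y')-h(y)]\,\pi(dy,dy')$. Then, because the defect enters linearly in the point through its bounded derivative, one gets the mean-value estimate $|h(y')-h(y)|\le\Vert\partial_\nu\partial_{(x,a)}\Delta f\Vert_\infty\,|y'-y|\int|z|\,m(dz)$. Cauchy--Schwarz then gives the stated product, and taking suprema gives $\alpha_\infty$.

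For the final consequence: if $\PP^*$ minimizes $\Tilde\Phi$, the derivative along $\kappa_\varepsilon$ is nonnegative, since $\kappa_\varepsilon(\Tilde\PP')\in\cP_V(\nu)$ and $\kappa_0(\Tilde\PP')=\PP^*$. Inequality \eqref{eq:alpha_potential_MFG} then yields $\Tilde J(\iota(\PP^*))\le\Tilde J(\Tilde\PP')+\alpha_\infty$. Consistency follows from Lemma \ref{lemma:mfg_mfc}\ref{item:embedding}.
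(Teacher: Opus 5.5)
Your proposal is correct and matches the argument the paper sketches without details (the derivative formula \eqref{eq:potential_deriv}, the path-integral form of $F^\infty,G^\infty$, and a Theorem~\ref{thm:alpha2}-type mixed-derivative estimate): the linear functional derivative of $m\mapsto\int F^\infty(t,\cdot,m)\,dm$ differs from $f(t,\cdot,m)$ by a function with gradient at most $\frac12\Vert\partial_{\nu}\partial_{(x,a)}\Delta f\Vert_{\infty}\int|z|\,m(dz)$, and integrating this against an optimal coupling of $(m,m')$ gives the stated bound. One slip in Step~2: $\int_0^1\partial_{(x,a)}f(t,rx,ra,m_r)^{\top}(x,a)\,dr$ alone is not $f(t,x,a,m)-f(t,0,0,\delta_0)$, because the fundamental theorem of calculus along $r\mapsto f(t,rx,ra,m_r)$ also produces the measure-flow term $\int_0^1\!\int\partial_{\nu}f(t,rx,ra,m_r)(rz)^{\top}z\,m(dz)\,dr$; your cross term must be compared with this term, and their difference is the $r$-weighted asymmetry integral that you then bound.
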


The proof uses the expression \eqref{eq:potential_deriv} for the derivative of $\widetilde{\Phi}$, the definition  \eqref{eq:SDE_meanfield} of $F^\infty$ and $G^\infty$, and arguments similar to those in Theorem \ref{thm:alpha2}. We omit the details.

\subsection{Extensions to other game structures}

We have established the convergence of $\alpha$-potential games with symmetric, weakly interacting players and open-loop strategies to potential MFGs. Naturally one wonders   whether analogous convergence results can be established for other classes of games.    

One extension is to games of inhomogeneous agents with moderate interactions, as studied in  \cite{flandoli2022n, djete2025non,djete2026non}. 
In fact, the $N$-player $\alpha$-potential functions in Theorem \ref{thm:alpha2}   allow for inhomogeneous players with general interaction structures. For instance,  along with the propagation of chaos for non-exchangeable control problems in \cite{djete2025non},
it provides a route to constructing potential functions for graphon MFGs and MFGs with moderate interactions.

Another direction is the convergence of $N$-player $\alpha$-potential games with closed-loop controls to the corresponding potential MFGs. Since open-loop and closed-loop controls are equivalent in the mean-field limit \cite{djete2023large}, the MFC objective \eqref{eq:mf_potential_measure}  should serve as a potential function for the closed-loop MFG. However, the construction of the $N$-player $\alpha$-potential function in Theorem \ref{thm:alpha2} relies on the decoupled structure of the state dynamics, which holds in the open-loop setting but fails for closed-loop controls, where perturbations of one player's policy can affect the states and controls of all players. It remains   to construct a suitable modification of the $N$-player $\alpha_N$-potential function for closed-loop controls that converges to \eqref{eq:mf_potential_measure}   
and has a vanishing $ \alpha_N$.
  
A similar difficulty arises for $N$-player games with interactions in the state dynamics. To the best of our knowledge, there is no existing characterization of potential MFGs with law dependence in the state dynamics. The most   related work is \cite{hofer2026optimal}, which starts from an MFC problem and shows that, under specific invertibility conditions on the law dependence in the state dynamics, the MFC objective serves as a potential function for a corresponding MFG. Meanwhile, for $N$-player games with state interactions, \cite{guo2025alpha} constructs an $\alpha$-potential function using sensitivity processes, which leads to an infinite-dimensional control problem. It remains to see   whether such an infinite-dimensional control formulation admits a meaningful limit under suitable conditions, and whether this limit can be interpreted as a potential function for the corresponding MFG.

\section{Proofs of Main Results}
\label{sec:proof}

\subsection{Proof of Theorem  \ref{thm:alpha2}}

\begin{proof}[Proof of Theorem  \ref{thm:alpha2}]
Fix an arbitrary   $t\in [0,T]$, $x\in \RR^{Nn}$, $a\in \RR^{Nk}$,
  $r,\varepsilon\in [0,1]$, 
$i\in [N]$, $x_i'\in \RR^n$ and $a_i'\in \RR^k$.
Write   $x'=(x_i',x_{-i})$, 
    and $x_{r,\varepsilon}= \varepsilon rx'+(1-\varepsilon)rx$. 
 Similarly, write 
 $a'=(a_i',a_{-i})$,
  and $a_{r,\varepsilon}= \varepsilon ra'+(1-\varepsilon) ra$.
 By \eqref{eq:F_G_bar} and  the fundamental theorem of calculus, 
\begin{align*}
F(t,x,a)&=\int_0^1 \sum_{j=1}^N 
\begin{pmatrix}  x_j \\a_j  \end{pmatrix}^\top  \partial_{(x_j,a_j)} f_j (t,rx,ra)  dr \\
& = \int_0^1 \sum_{j=1}^N 
\begin{pmatrix}  x_j \\a_j  \end{pmatrix}^\top \partial_{(x_j,a_j)} f_i (t,rx,ra)  dr - \int_0^1 \sum_{j\in[N]\backslash\{i\}} 
\begin{pmatrix}  x_j \\a_j  \end{pmatrix}^\top
 \partial_{(x_j,a_j)} \Delta_{i,j}^f(t,rx,ra)   dr \\
&= f_i(t,x,a) - f_i(t,0,0)- \int_0^1 \sum_{j\in[N]\backslash\{i\}} 
\begin{pmatrix}  x_j \\a_j  \end{pmatrix}^\top \partial_{(x_j,a_j)} \Delta_{i,j}^f (t,rx,ra)   dr.
\end{align*}
Then it follows that
\begin{align*}
& F(t,x',a')-F(t,x,a)\\
=&f_i(t,x',a')-f_i(t,x,a)- \int_0^1 \sum_{j\in[N]\backslash\{i\}} 
\left[
(\partial_{(x_j,a_j)} \Delta_{i,j}^f)(t,rx',ra') -   (\partial_{(x_j,a_j)} \Delta_{i,j}^f)(t,rx,ra) 
\right]^{\top}
\begin{pmatrix}  x_j \\a_j \end{pmatrix} dr \\
=&f_i(t,x',a')-f_i(t,x,a)- \int_0^1 \int_0^1 \sum_{j\in[N]\backslash\{i\}}   r \begin{pmatrix} x_i'-x_i \\ a_i'-a_i \end{pmatrix}^{\top}  \partial_{(x_i,a_i)(x_j,a_j)}^2 \Delta_{i,j}^f (t, x_{r,\varepsilon}, a_{r,\varepsilon})\begin{pmatrix} x_j   \\  a_j \end{pmatrix} d\varepsilon dr,
\end{align*}
where the last equality uses  the fundamental theorem of calculus. 
Similar computation shows that 
\begin{align*}
G(x')-G(x)=g_i(x')-g_i(x) - \int_0^1 \int_0^1 \sum_{j\in[N]\backslash\{i\}}   r ( x_i'-x_i )^{\top}  \partial_{x_ix_j}^2 \Delta_{i,j}^g (x_{r,\varepsilon}) x_j d\varepsilon dr.
\end{align*}

Now let  $i\in[N]$,
$u_i,u_i'\in \cA_i$ and $\bu_{-i}\in \cA_{-i}$. 
For each $r,\varepsilon\in[0,1]$, write 
$\bX^{i,r,\varepsilon}\coloneqq \varepsilon r (X_i^{u_i'},\bX_{-i}^{\bu_{-i}}) +(1-\varepsilon) r\bX^{\bu}$, and $\bu^{i,r,\varepsilon}\coloneqq \varepsilon r (u_i',\bu_{-i})+(1-\varepsilon) r \bu$.
Then 
\begin{equation*}
\begin{aligned}
& \left\vert \Phi(u_i',\bu_{-i}) - \Phi(\bu)- (J_i(u_i',\bu_{-i}) - J_i(\bu)) \right\vert\\
=& \left\vert\mathbb{E}\left[ \int_0^T \int_0^1 \int_0^1 \sum_{j\in[N]\backslash\{i\}}   r \begin{pmatrix} X_{t,i}^{u_i'}-X_{t,i}^{u_i} \\ u_{t,i}'-u_{t,i} \end{pmatrix}^{\top}  \partial_{(x_i,a_i)(x_j,a_j)}^2 \Delta_{i,j}^f (t, \bX_t^{i,r,\varepsilon}, \bu_t^{i,r,\varepsilon})\begin{pmatrix} X_{t,j}^{u_j}   \\  u_{t,j} \end{pmatrix} d\varepsilon dr dt\right.\right.\\
&\left.\left. + \int_0^1\int_0^1 \sum_{j\in [N]\backslash\{i\}} r (X_{T,i}^{u_i'}-X_{T,i}^{u_i})^{\top} \partial_{x_ix_j}^2 \Delta_{i,j}^g(\bX_T^{i,r,\varepsilon}) X_{T,j}^{u_j}  d\varepsilon dr   \right] \right\vert \\
\leq & \frac{1}{2}\sum_{j\in[N]\backslash\{i\}} \mathbb{E}\left[\int_0^T    \Vert \partial_{(x_i,a_i)(x_j,a_j)}^2\Delta_{i,j}^f\Vert_{\infty}  \left\vert \begin{pmatrix}X_{t,i}^{u_i'}-X_{t,i}^{u_i} \\  u_{t,i}'-u_{t,i}\end{pmatrix} \right\vert   \left\vert \begin{pmatrix}X_{t,j}^{u_j} \\  u_{t,j}\end{pmatrix} \right\vert dt  \right.\\
&\left.+ \Vert \partial_{x_ix_j}^2\Delta_{i,j}^g \Vert_{\infty} \vert X_{T,i}^{u_i'}-X_{T,i}^{u_i}\vert \vert X_{T,j}^{u_j} \vert\right] \\
\leq & \frac{1}{2}\sum_{j\in[N]\backslash\{i\}} \left\{ \Vert \partial_{(x_i,a_i)(x_j,a_j)}^2\Delta_{i,j}^f\Vert_{\infty}  \left\Vert \begin{pmatrix}X_i^{u_i'}-X_i^{u_i} \\ u_i'-u_i\end{pmatrix}\right\Vert_{\mathcal{H}^2(\mathbb{R}^{n+k})}     \left\Vert \begin{pmatrix}X_j^{u_j}\\     u_j\end{pmatrix} \right\Vert_{\mathcal{H}^2(\mathbb{R}^{n+k})} \right.\\
&\left.+ \Vert \partial_{x_ix_j}^2\Delta_{i,j}^g \Vert_{\infty} \Vert X_{T,i}^{u_i'}-X_{T,i}^{u_i}\Vert_{L^2} \Vert X_{T,j}^{u_j} \Vert_{L^2}\right\},
\end{aligned}
\end{equation*}
where the last line is by   the Cauchy-Schwarz inequality. Taking the supremum over all $i\in [N]$, $u_i,u_i'\in \cA_i$, and $\bu_{-i}\in \cA_{-i}$ yields the desired upper bound of $\alpha_N$. 
\end{proof}

\subsection{Proof of Theorem \ref{thm:mean_field_limit2}}

The following  growth and continuity properties of $F^{\infty}$ and $G^{\infty}$ will be used in the proof.
\begin{lemma}\label{lemma:cost_mean_field}
Suppose Assumption \ref{assumption:mean_field} holds. There exists a constant $C>0$ such that for all $(t,x,a,\nu,\mu)\in[0,T]\times\mathbb{R}^n\times\mathbb{R}^k\times\mathcal{P}_2(\mathbb{R}^n\times\mathbb{R}^k)\times \mathcal{P}_2(\mathbb{R}^n)$,
\begin{equation*}
\begin{aligned}
\vert F^{\infty}(t,x,a,\nu)\vert  \leq & C\left(1+\vert x\vert^2 +   \vert a\vert^2  + \mathcal{W}_2(\nu,\delta_0)^2 \right),\quad \vert G^{\infty}(x,\mu)\vert \leq  C\left(1+\vert x\vert^2  + \mathcal{W}_2(\mu,\delta_0)^2 \right).
\end{aligned}
\end{equation*}
Moreover, for each $t\in[0,T]$, $F^{\infty}(t,\cdot)$ and $G^{\infty}$ are continuous.
\end{lemma}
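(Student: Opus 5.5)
The plan is to read both bounds directly off the definition \eqref{eq:mf_potential_cost}, using the linear growth of $\partial_x f$, $\partial_a f$ and $\partial_x g$ that is built into the classes $\cC^{0,2,2}$ and $\cC^{2,2}$ in Assumption~\ref{assumption:mean_field}\ref{item:f_g_i}. Continuity will then follow from dominated convergence in the $r$-integral.

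\textbf{Growth bound.} Item (iv) of the definition of $\cC^{0,2,2}([0,T]\times\RR^{n+k}\times\cP_2(\RR^{n+k}))$, applied with state variable $(x,a)$, gives
\[
\vert \partial_{(x,a)}f(t,y,\nu')\vert\le C\Big(1+\vert y\vert+\int\vert z\vert\,\nu'(dz)\Big),
\]
uniformly in $t$. Take $y=(rx,ra)$ and $\nu'=\nu\circ T_r^{-1}$. Then $\vert y\vert\le \vert x\vert+\vert a\vert$ and
\[
\int\vert z\vert\,\nu\circ T_r^{-1}(dz)=r\int\vert z\vert\,\nu(dz)\le \cW_2(\nu,\delta_0),
\]
using $r\le 1$ and Jensen's inequality. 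Consequently
\[
\vert F^\infty(t,x,a,\nu)\vert\le \int_0^1 \vert(x,a)\vert\, C\big(1+\vert x\vert+\vert a\vert+\cW_2(\nu,\delta_0)\big)\,dr,
\]
and Young's inequality $uv\le \tfrac12(u^2+v^2)$ yields the stated quadratic bound. The estimate for $G^\infty$ is identical, using $T_r'$ and the growth condition on $\partial_x g$.

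\textbf{Continuity.} Fix $t$ and let $(x_m,a_m,\nu_m)\to(x,a,\nu)$ in $\RR^n\times\RR^k\times\cP_2(\RR^{n+k})$. For each $r\in[0,1]$, $T_r$ is $r$-Lipschitz, so $\cW_2(\nu_m\circ T_r^{-1},\nu\circ T_r^{-1})\le r\,\cW_2(\nu_m,\nu)\to0$. Item (iii) of $\cC^{0,2,2}$ gives that $(y,\nu')\mapsto \partial_{(x,a)}f(t,y,\nu')$ is continuous. Hence, for every $r$,
\[
(x_m,a_m)^\top\partial_{(x,a)}f\big(t,rx_m,ra_m,\nu_m\circ T_r^{-1}\big)\longrightarrow (x,a)^\top\partial_{(x,a)}f\big(t,rx,ra,\nu\circ T_r^{-1}\big).
\]
The integrands are bounded uniformly in $r$ and $m$ by the growth estimate above, since convergent sequences have bounded norms and bounded $\cW_2(\nu_m,\delta_0)$. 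Dominated convergence in $r$ then gives $F^\infty(t,x_m,a_m,\nu_m)\to F^\infty(t,x,a,\nu)$; $G^\infty$ is handled the same way.

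\textbf{Where care is needed.} The only delicate point is that the growth condition in (iv) controls $\partial_x f$, and I am reading it as also controlling $\partial_a f$ when $f$ is viewed as a function of the joint state $(x,a)\in\RR^{n+k}$. If (iv) were meant to bound $\partial_x f$ alone, I would instead take the bound from Assumption~\ref{assumption:cost}: the second derivatives $\partial^2_{(x,a)(x,a)}f_i$ are bounded and $\partial_{(x,a)}f_i(\cdot,0,0)$ is bounded, which already forces linear growth of $\partial_{(x,a)}f$. Continuity of $\partial_{(x,a)}f$ in $(y,\nu')$ is likewise either part of (iii) for the joint variable or follows from the $C^2$ regularity combined with continuity of $\partial_\mu$ via a mean-value argument.
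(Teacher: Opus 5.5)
Your proof is correct and follows essentially the same route as the paper: the quadratic bound comes from the linear growth in item (iv) of $\mathcal{C}^{0,2,2}$ (your reading that the spatial gradient there is the full $\partial_{(x,a)}f$ is right) together with Young/Cauchy--Schwarz, and continuity comes from dominated convergence in $r$. Your contraction estimate $\mathcal{W}_2(\nu_m\circ T_r^{-1},\nu\circ T_r^{-1})\le r\,\mathcal{W}_2(\nu_m,\nu)$ is a cleaner substitute for the paper's weak-convergence-plus-uniform-integrability step. Two small corrections: the joint continuity of $\partial_{(x,a)}f(t,\cdot)$ is item (i) of the definition, not item (iii); and the fallback via Assumption~\ref{assumption:cost} is not needed, and would not be available anyway, since that assumption is not part of Assumption~\ref{assumption:mean_field}.
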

\begin{proof}
We only prove the required properties for $F^\infty$, since the corresponding properties of  $G^{\infty}$ follow by similar arguments.
By the definition \eqref{eq:mf_potential_cost} and  the growth conditions of $\partial_x f$ and $\partial_a f$ in Assumption \ref{assumption:mean_field}, 
\begin{equation*}
\begin{aligned}
 & \vert F^{\infty}(t,x,a,\nu)\vert \leq  C \int_0^1 \left( 1+ \vert rx\vert + \vert ra \vert + \int_{\mathbb{R}^d\times \mathbb{R}^k} (\vert x'\vert + \vert a'\vert) (\nu\circ T_r^{-1})(dx',da') \right)(\vert x\vert + \vert a \vert) dr \\
\leq & C \left( 1+ \vert x\vert  +\vert a \vert +  \int_{\mathbb{R}^n\times\mathbb{R}^k} (\vert x' \vert + \vert a'\vert ) \nu(dx',da')  \right)(\vert x\vert + \vert a\vert) \leq  C \left(1+\vert x\vert^2  + \vert a\vert^2  + \mathcal{W}_2(\nu,\delta_0)^2 \right),
\end{aligned}
\end{equation*}
where the last inequality is by the Cauchy-Schwarz inequality. 
To show the continuity, 
let $(\nu_k)_{k\in\mathbb{N}^*}\subset \mathcal{P}_2(\mathbb{R}^n\times\mathbb{R}^k)$ such that  $\lim_{k\to \infty}\mathcal{W}_2(\nu_k,\nu)=0$ for some $\nu \in \mathcal{P}_2(\mathbb{R}^n\times\mathbb{R}^k)$.
By   \cite[Proposition~A.1]{lacker2015mean}, $\{\nu_k\}_{k\in \NN^*}$ converges to $\nu$ weakly, and $(\nu_k)_{k\in\mathbb{N}^*}$ satisfies the uniform integrability condition:
\begin{equation*}
\lim_{R\rightarrow \infty} \sup_k \int_{\{(x,a):\vert x\vert^2 +\vert a\vert^2 \geq R\}} (\vert x\vert^2+\vert a\vert^2) \nu_k(dx,da) =0,
\end{equation*}
which implies that for each $r\in [0,1]$, $\{\nu_k\circ T_r^{-1}\}_{k\in \NN^*}$ converges to  $ \nu \circ T_r^{-1}$ weakly, and $(\nu_k\circ T_r^{-1})_{k\in\mathbb{N}}$ satisfies the uniform integrability condition:
\begin{equation*}
\lim_{R\rightarrow \infty} \sup_k \int_{\{(x,a):\vert x\vert^2+\vert a\vert^2 \geq R\}} (\vert x\vert^2 + \vert a\vert^2) (\nu_k\circ T_r^{-1})(dx,da) =0.
\end{equation*}
Thus, 
for each $r\in [0,1]$, $\{\nu_k\circ T_r^{-1}\}_{k\in \NN^*}$ converges to 
$  \nu \circ T_r^{-1}$ under $\mathcal{W}_2$. For each $t\in[0,T]$, the continuity of $F^{\infty}(t,\cdot)$ follows from the continuity of $\partial_x f(t,\cdot)$ and $\partial_a f(t,\cdot)$, and the Dominated Convergence Theorem.  
\end{proof}

\begin{proof}[Proof of Theorem \ref{thm:mean_field_limit2}]
By Lemma \ref{lemma:cost_mean_field} and  \cite[Theorem 3.1]{djete2022extended},   $V^{\infty}(\nu) = V_V(\nu)$ for all $\nu\in\cP_p(\RR^n)$. 
For each $N\in\NN^*$, $\Phi^N$ has the following decomposition: for all $\bu=(u_1,\cdots,u_N)\in\cA_N^N$,
\begin{align}\label{eq:potential_diff}
\begin{split}
&\Phi^N(\bu) = \Tilde{\Phi}\left( \Tilde{\mathbb{P}}^N(\bu) \right) \\
&\quad + \frac{1}{N^2}\sum_{i\in[N]}\mathbb{E}\left[ \int_0^T \int_0^1 \begin{pmatrix} X_{t,i}^{u_i}  \\ u_{t,i}  \end{pmatrix}^{\top}  \partial_{\nu}f \left(t,rX_{t,i}^{u_i},ru_{t,i}, \frac{1}{N}\sum_{j\in[N]}\delta_{(rX_{t,j}^{u_j},ru_{t,j})}\right)(rX_{t,i}^{u_i},ru_{t,i})  dr dt \right.\\
& \quad \left.+ \int_0^1 (X_{T,i}^{u_i})^{\top}\partial_{\mu} g \left(rX_{T,i}^{u_i},\frac{1}{N}\sum_{j\in[N]}\delta_{rX_{T,j}^{u_j}} \right) (rX_{T,i}^{u_i}) dr\right].
\end{split}
\end{align}
For all $N\in\mathbb{N}^*$ and $\bu\in\cA_N^N$, define $\Tilde{\Phi}^N(\bu)\coloneqq \Tilde{\Phi}\left(\Tilde{\mathbb{P}}^N(\bu) \right)$. 
We shall prove the desired conclusions by leveraging  the convergence of   $(\Tilde{\Phi}^N)_{N\in \NN^*}$.

To this end, by Assumption \ref{assumption:mean_field} and standard SDE estimates \cite[Theorem 3.4.3]{zhang2017backward}, 
there exists a constant $C\ge 0$ such that for all $i\in \NN$, $u_i\in \cA_N$, and $t\in [0,T]$, 
$ \mathbb E[|X^{u_i}_{t,i}|^2]\le C$, which along with \eqref{eq:potential_diff}  implies that  
\begin{equation}
\label{eq:Phi-tildePhi}
\lim_{N\rightarrow\infty}\sup_{\bu\in\mathcal{A}_N^N}\vert \Phi^N(\bu) - \Tilde{\Phi}^N(\bu)\vert = 0.
\end{equation}
Let $\Tilde{V}^N(\nu_1,\cdots,\nu_N)=\inf_{\bu\in\mathcal{A}_N^N} \Tilde{\Phi}^N(\bu)$ be the optimal value function of 
$\Tilde{\Phi}^N$. We prove by contradiction that 
\begin{equation}
\label{eq:V_N_tildeV_N}
\lim_{N\rightarrow\infty} \vert V^N(\nu_1,\cdots,\nu_N) - \Tilde{V}^N(\nu_1,\cdots,\nu_N)\vert = 0.
\end{equation}
Suppose that the statement fails, then there exists $\delta>0$ and a subsequence $(N_j)_{j\in\mathbb{N}^*}$ such that $\vert\Delta V^{N_j}\vert >\delta$ with $\Delta V^{N_j}\coloneqq  V^{N_j}(\nu_1,\cdots,\nu_{N_j}) - \Tilde{V}^{N_j}(\nu_1,\cdots,\nu_{N_j}) $, which implies either $\Delta V^{N_j}>\delta$ or $\Delta V^{N_j}<-\delta$. Without loss of generality, we can assume  that $\Delta V^{N_j} > \delta$. 
For all $N\in\mathbb{N}^*$, let $\Tilde{\bu}^N\in\cA_N^N$ be an $\varepsilon_N$-optimal control of $\Tilde{\Phi}^N$ with $\lim_{N\rightarrow\infty}\varepsilon_N=0$, then $\Tilde{\Phi}^N(\Tilde{\bu}^N)\leq \Tilde{V}^N(\nu_1,\cdots,\nu_N)+\varepsilon_N$. By \eqref{eq:Phi-tildePhi},   $\lim_{N\rightarrow\infty}\vert \Phi^N(\Tilde{\bu}^N)-\Tilde{V}^N(\nu_1,\cdots,\nu_N)\vert = \lim_{N\rightarrow\infty} \vert \Tilde{\Phi}^N(\Tilde{\bu}^N)-\Tilde{V}^N(\nu_1,\cdots,\nu_N)\vert=0$, which contradicts to   $\Phi^{N_j}(\Tilde{\bu}^{N_j})\geq V^{N_j}(\nu_1,\cdots,\nu_{N_j})>\Tilde{V}^{N_j}(\nu_1,\cdots,\nu_{N_j})+\delta$. The proof for $\Delta V^{N_j} <- \delta$ follows by similar arguments with $\Tilde{\Phi}^{N_j}$ and $\Phi^{N_j}$ interchanged, and $\Tilde V^{N_j}$ and $V^{N_j}$ interchanged. 
This proves \eqref{eq:V_N_tildeV_N}, which along with \cite[Theorem 3.3]{djete2022extended} yields
\begin{equation*}
\lim_{N\rightarrow\infty}\left\vert  V^N(\nu_1,...,\nu_N)- V^{\infty}\left(\frac{1}{N}\sum_{i=1}^N \nu_i \right) \right\vert =\lim_{N\rightarrow\infty}\left\vert  \Tilde{V}^N(\nu_1,...,\nu_N)- V^{\infty}\left(\frac{1}{N}\sum_{i=1}^N \nu_i \right) \right\vert= 0.
\end{equation*}
This proves Item \ref{item:converge_value} of the statement. 

Now consider the sequence of $\varepsilon_N$-optimal controls $(\bu^N)_{N\in\NN^*}$ satisfying  $\Phi^N(\bu^N)\leq V^N(\nu_1,\cdots,\nu_N)+\varepsilon_N$ for all $N\in\NN^*$.  By \eqref{eq:Phi-tildePhi} and \eqref{eq:V_N_tildeV_N},
\begin{align*}
& \lim_{N\to\infty} \vert \Tilde{\Phi}^N(\bu^N)-\Tilde{V}^N(\nu_1,\cdots,\nu_N)\vert \\
\leq&  \lim_{N\to\infty} \vert \Tilde{\Phi}^N(\bu^N)- \Phi^N(\bu^N)\vert +\lim_{N\to\infty} \vert \Phi^N(\bu^N)-V^N(\nu_1,\cdots,\nu_N)\vert \\
&+\lim_{N\to\infty} \vert V^N(\nu_1,\cdots,\nu_N)-\Tilde{V}^N(\nu_1,\cdots,\nu_N)\vert = 0,
\end{align*}
which implies that $(\bu^N)_{N\in\NN^*}$ is also a sequence of $\varepsilon'_N$-optimal controls of $\Tilde{\Phi}^N$ for a  sequence $(\varepsilon'_N)_{N\in\mathbb{N}^*}\subset(0,\infty) $ such that $\lim_{N\to \infty}\varepsilon'_N=0$. Applying \cite[Proposition 3.4]{djete2022extended} yields the desired conclusion in Item \ref{item:converge_minimizer}.
\end{proof}

\subsection{Proofs of Theorems \ref{thm:alpha0_equiv} and \ref{thm:construction_F_G}}

\begin{proof}[Proof of Theorem \ref{thm:alpha0_equiv}]
The proof proceeds by showing the equivalence between Items \ref{item:alpha_N_limit}
and 
\ref{item:curl-free},
and 
the equivalence between Items \ref{item:curl-free} and \ref{item:cost_potential}.
We only present  the proof for $f$, as the proof for $g$ is analogous.
To simplify the notation, we write $z=(x,a)\in \RR^{Nn}\times \RR^{Nk}$, $z_i=(x_i,a_i)\in \RR^{n}\times \RR^{k}$, and $\Delta^f_{i,j}=f_i-f_j$  for all $i,j\in [N]$.

We first show the equivalence between Items \ref{item:alpha_N_limit}
and 
\ref{item:curl-free}.
For all $i,j\in[N]$, $i\neq j$, $t\in[0,T]$, $z=(x_l,a_l)_{l\in[N]}\in\mathbb{R}^{Nn}\times \RR^{Nk}$,
by \cite[Proposition 5.35]{carmona2018probabilistic1},
\cite[Remark 4.16]{carmona2018probabilistic2},
and the boundedness of $\partial^2_{\nu}f$,  
\begin{equation}
\label{eq:hessian_Deltaf}
\begin{aligned}
&\partial^2_{(x_i,a_i)(x_j,a_j)}\Delta_{i,j}^f(t,z) 
\\
&= \frac{1}{N}\left[ \partial_{(x,a)}\partial_{\nu} f\left(t,z_i,\frac{1}{N}\sum_{l\in[N]}\delta_{z_l}\right)(z_j)-\partial_{\nu}\partial_{(x,a)} f\left(t,z_j,\frac{1}{N}\sum_{l\in[N]}\delta_{z_l}\right)(z_i) \right] \\
&\quad + \frac{1}{N^2}\left[ \partial^2_{\nu} f\left(t,z_i,\frac{1}{N}\sum_{l\in[N]}\delta_{z_l}\right)(z_j,z_i) - \partial^2_{\nu} f\left(t,z_j,\frac{1}{N}\sum_{l\in[N]}\delta_{z_l}\right)(z_j,z_i) \right] \\
&= \frac{1}{N} \hat f \left(t, z_i,z_j, \frac{1}{N}\sum_{l\in[N]}\delta_{z_l} \right)+ O\left( \frac{1}{N^2}\right),
\end{aligned}
\end{equation}
where  
$O\left({1}/{N^2}\right)$ denotes a term that vanishes at rate $1/N^2$ as $N\to\infty$ (uniformly in $(t,z,\nu)$), and  
$\hat f:[0,T]\times\RR^{n+k}\times\mathbb{R}^{n+k}\times\mathcal{P}_2(\RR^n\times\RR^k)\rightarrow\mathbb{R}^{(n+k)\times(n+k)}$ is given by \begin{equation*}
\hat f(t,z,z',\nu)\coloneqq  \left[\partial_{\nu}\partial_{(x,a)} f\left(t,z,\nu\right)(z')\right]^{\top}-\partial_{\nu}\partial_{(x,a)} f\left(t,z',\nu\right)(z) .
\end{equation*} 
Note that under Item \ref{item:curl-free}, $\hat f=0$, which implies Item \ref{item:alpha_N_limit}.
It remains to show Item \ref{item:alpha_N_limit} implies Item \ref{item:curl-free}.
For all $N\in\mathbb{N}^*$ and $z,z'\in\mathbb{R}^{n+k}$, define 
$ 
\mathcal{P}_{z,z'}^N\coloneqq \left\{ \frac{1}{N}\sum_{j\in[N]} \delta_{z_j}\,\Big\vert\, z,z'\in \{z_j\}_{j\in [N]}\subset \mathbb{R}^{n+k} \right\}
$ 
the set of empirical measures generated by $N$ elements in $\mathbb{R}^{n+k}$ including $z$ and $z'$. 
By \eqref{eq:hessian_Deltaf},
$\Vert \partial^2_{(x_i,a_i) (x_j,a_j)}\Delta_{i,j}^f\Vert_{\infty}=\Vert \partial^2_{(x_1,a_1) (x_2,a_2)}\Delta_{1,2}^f\Vert_{\infty}$ for all $j\not =i$, and hence
\begin{align}
\label{eq:limsup_max_f}
\begin{split}
& \limsup_{N\rightarrow\infty}\max_{i\in [N]}\sum_{j\in [N]}\Vert \partial^2_{(x_i,a_i) (x_j,a_j)}\Delta_{i,j}^f\Vert_{\infty} =\limsup_{N\rightarrow\infty}N \Vert \partial^2_{(x_1,a_1) (x_2,a_2)}\Delta_{1,2}^f\Vert_{\infty} \\
&\leq \limsup_{N\rightarrow\infty}\esssup_{t\in [0,T]}\sup_{z\in\RR^{N(n+k)}} \left\vert \hat f \left( t,z_1,z_2, \frac{1}{N}\sum_{l\in[N]}\delta_{z_l} \right)\right\vert
\\
&=\limsup_{N\rightarrow\infty}\esssup_{t\in [0,T]}\sup_{z,z'\in\mathbb{R}^{n+k},\nu\in\mathcal{P}_{z,z'}^N} \vert \hat f(t,z,z',\nu)\vert \\
& \leq \esssup_{t\in [0,T]}\sup_{z,z'\in\mathbb{R}^{n+k},\nu\in\mathcal{P}_2(\mathbb{R}^{n+k})} \vert \hat f(t,z,z',\nu)\vert=\|\hat f\|_\infty.
\end{split}
\end{align}
We now show 
$\liminf_{N\rightarrow\infty}\max_{i\in [N]}\sum_{j\in [N]}\Vert \partial^2_{(x_i,a_i) (x_j,a_j)}\Delta_{i,j}^f\Vert_{\infty} =\|\hat f\|_\infty$.
Denote by  $\operatorname{Leb}_{\RR}$   the Lebesgue measure on $[0,T]$. For each $N\in\NN^*$, there exists $D_N\in\cB([0,T])$ such that $\operatorname{Leb}_{\RR}([0,T]\setminus D_N)=0$, and $\sup_{t\in D_N,z\in\RR^{N(n+k)} }\vert \partial^2_{(x_1,a_1) (x_2,a_2)}\Delta_{1,2}^f(t,z)\vert = \Vert \partial^2_{(x_1,a_1) (x_2,a_2)}\Delta_{1,2}^f\Vert_{\infty}$. Let $D\coloneqq \bigcap_{N\in\NN^*} D_N$, then $\operatorname{Leb}_{\RR}([0,T]\setminus D)=0$, and
\begin{align*}
&\liminf_{N\rightarrow\infty}\max_{i\in [N]}\sum_{j\in [N]}\Vert \partial^2_{(x_i,a_i) (x_j,a_j)}\Delta_{i,j}^f\Vert_{\infty} =\liminf_{N\rightarrow\infty}N \Vert \partial^2_{(x_1,a_1) (x_2,a_2)}\Delta_{1,2}^f\Vert_{\infty} \\
=& \liminf_{N\rightarrow\infty}\sup_{t\in D_N,z\in\RR^{N(n+k)} }N\vert \partial^2_{(x_1,a_1) (x_2,a_2)}\Delta_{1,2}^f(t,z)\vert \geq \liminf_{N\rightarrow\infty}\sup_{t\in D,z\in\RR^{N(n+k)}  }N\vert \partial^2_{(x_1,a_1) (x_2,a_2)}\Delta_{1,2}^f(t,z)\vert \\
=& \liminf_{N\rightarrow\infty}\sup_{t\in D,z,z'\in\RR^{N(n+k)} ,\nu\in\mathcal{P}_{(x,a),(x',a')}^N } \vert \hat f(t,z,z',\nu)\vert.
\end{align*}
For any $\varepsilon >0$, let $t_{\varepsilon}\in D$, $z_{\varepsilon},z_{\varepsilon}'\in \RR^{n+k}$, and  $ \nu_{\varepsilon}\in \mathcal{P}_2(\mathbb{R}^{n+k})$ be such that $$|\hat f(t_{\varepsilon},z_{\varepsilon},z_{\varepsilon}', \nu_{\varepsilon})| > \sup_{t\in D,z,z'\in\mathbb{R}^{n+k} ,\nu\in\cP_2(\RR^{n+k}) } \vert \hat f(t,z,z',\nu)\vert-\varepsilon.$$
Let $(\nu_{\varepsilon}^N)_{N\in\NN^*}\subset \cP_2(\RR^{n+k})$ 
 be   such that $\nu_{\varepsilon}^N \in \mathcal{P}_{z_{\varepsilon},z_{\varepsilon}'}^N$ for all $N \in\mathbb{N}^*$, and $\lim_{N\rightarrow\infty} \mathcal{W}_2(\nu_{\varepsilon}^N, \nu_{\varepsilon})=0$.
In particular, one can choose $\nu_{\varepsilon}^N=\frac{1}{N}(\delta_{z_{\varepsilon}}+\delta_{z_{\varepsilon}'}+\sum_{m=3}^N\delta_{Z_m})$,
where $(Z_m)_{m\ge 3}$
are realizations of i.i.d.~random variables with   distribution $\nu_{\varepsilon}\in\cP_2(\RR^{n+k})$ (see \cite[Section 5.1.2]{carmona2018probabilistic1}). Then  by the continuity of $\hat f$ in the measure component, 
\begin{align*}
&\liminf_{N\rightarrow\infty}\sup_{t\in D,z,z'\in\RR^{N(n+k)} ,\nu\in\mathcal{P}_{z,z'}^N } \vert \hat f(t,z,z',\nu)\vert
\geq \liminf_{N\rightarrow\infty} |\hat f(t_{\varepsilon},z_{\varepsilon},z_{\varepsilon}',\nu_{\varepsilon}^N)| = |\hat f(t_{\varepsilon},z_{\varepsilon},z_{\varepsilon}', \nu_{\varepsilon})| \\
&> \sup_{t\in D,z,z'\in\mathbb{R}^{n+k} ,\nu\in\cP_2(\RR^{n+k}) } \vert \hat f(t,z,z',\nu)\vert-\varepsilon \geq \esssup_{t\in [0,T]}\sup_{ z,z'\in\mathbb{R}^{n+k} ,\nu\in\cP_2(\RR^{n+k}) } \vert \hat f(t,z,z',\nu)\vert-\varepsilon
\\
&=\|\hat f\|_\infty-\varepsilon.
\end{align*}
This along with \eqref{eq:limsup_max_f} implies that $\lim_{N\rightarrow\infty}\max_{i\in [N]}\sum_{j\in [N]}\Vert \partial^2_{x_i x_j}\Delta_{i,j}^f\Vert_{\infty} = \|\hat f\|_\infty$. Hence,   Item \ref{item:alpha_N_limit} implies Item \ref{item:curl-free}.

We then show 
the equivalence between Items \ref{item:curl-free} and \ref{item:cost_potential}.
Suppose Item  \ref{item:cost_potential} holds.
The regularity of $\frac{\delta}{\delta \nu} F$ implies that 
$\nu\mapsto \partial_{\nu}  F(t,\nu)(z)$ is L-differentiable, and hence 
for a.e.~$t\in [0,T]$,
\begin{equation*}
\partial_{\nu}\partial_{(x,a)} f(t,z,\nu)(z') = \partial^2_{\nu}  F(t,\nu)(z,z') = [\partial^2_{\nu} F(t,\nu)(z',z)]^{\top} = [\partial_{\nu}\partial_{(x,a)} f(t,z',\nu)(z)]^{\top},
\end{equation*}
where the second identity used \cite[Corollary 5.89]{carmona2018probabilistic1}. This shows that 
Item  \ref{item:cost_potential}
implies 
 Item \ref{item:curl-free}. 
 Conversely, suppose that 
 Item \ref{item:curl-free} holds. Define  $F:[0,T]\times \cP(\RR^{n}\times \RR^k)\to \RR$ by
\begin{equation*}
F(t,\nu)= \int_0^1 \int_{\RR^n\times\RR^k} \partial_{(x,a)} f\left(t,rx,ra, \nu\circ T_r^{-1} \right)^{\top} \begin{pmatrix}x\\a\end{pmatrix} \nu(dx,da) dr,
\end{equation*}
where $T_r$ is defined by   \eqref{eq:F_G_N}. 
We shall verify that $F$ satisfies Item    \ref{item:cost_potential}. 
Let 
$(\Omega,\mathcal{F},\mathbb{P})$
be an atomless probability space. 
Consider the lifted function $\tilde F:[0,T]\times L^2(\Omega,\mathcal{F},\mathbb{P};\mathbb{R}^{n+k})\rightarrow\mathbb{R}$ given by 
\begin{equation*}
\tilde F(t,X)\coloneqq F(t,\mathcal{L}(X)) = \int_0^1 \mathbb{E}\left[ \partial_{(x,a)} f\left(t,rX, \mathcal{L}(rX) \right)^{\top} X \right] dr.
\end{equation*}
and calculate the G\^ateaux derivative of $\tilde F(t,\cdot)$ for each $t\in [0,T]$. Let $Y\in L^2(\Omega,\mathcal{F},\mathbb{P};\mathbb{R}^{n+k})$, and $\tX,\tY \in L^2(\Tilde{\Omega},\Tilde{\mathcal{F}},\Tilde{\mathbb{P}};\mathbb{R}^{n+k})$ be independent copies of $X$ and $Y$, then  
\begin{equation*}
\begin{aligned}
&\frac{d}{d\varepsilon} \tilde F(t,X+\varepsilon Y)\vert_{\varepsilon=0} =  \frac{d}{d\varepsilon} \left.\left(\int_0^1 \mathbb{E}\left[ \partial_{(x,a)} f\left(t,rX+\varepsilon r Y, \mathcal{L}(rX+\varepsilon r Y) \right)^{\top} (X+\varepsilon Y) \right] dr \right)\right\vert_{\varepsilon=0} \\
&\quad= \int_0^1 \mathbb{E}\left[ \frac{d}{d\varepsilon}\left\{ \partial_{(x,a)} f\left(t,rX+\varepsilon r Y, \mathcal{L}(rX+\varepsilon r Y) \right)^{\top} (X+\varepsilon Y) \right\} \bigg\vert_{\varepsilon=0}\right] dr \\
&\quad= \int_0^1 \mathbb{E}\left[ \partial_{(x,a)} f(t,rX,\mathcal{L}(rX))^{\top} Y + rX^{\top}\partial^2_{(x,a)(x,a)}f(t,rX,\mathcal{L}(rX))Y \right.\\
&\quad\quad\left.+ r\Tilde{\mathbb{E}}\left[ X^{\top} \partial_{\nu}\partial_{(x,a)} f(t,rX,\mathcal{L}(rX))(r\tX) \tY \right] \right] dr \\
&\quad= \int_0^1 \mathbb{E}\left[\left\{ \partial_{(x,a)} f(t,rX,\mathcal{L}(rX)) + r \partial^2_{(x,a)(x,a)} f(t,rX,\mathcal{L}(rX)) X \right.\right.\\
&\quad\quad\left.\left.+ r \Tilde{\mathbb{E}}\left[ \partial_{\nu}\partial_{(x,a)} f(t,r\Tilde{X},\mathcal{L}(rX))(rX)^{\top}\Tilde{X}  \right] \right\}^{\top} Y   \right] dr.
\end{aligned}
\end{equation*}
Moreover, the derivative of $r\mapsto \mathbb{E}\left[ r\partial_{(x,a)} f(t,rX,\mathcal{L}(rX))^{\top} Y \right]$ is given by
\begin{equation*}
\begin{aligned}
&\frac{d}{dr}\mathbb{E}\left[ r\partial_{(x,a)} f(t,rX,\mathcal{L}(rX))^{\top} Y \right] \\
=& \mathbb{E}\left[ \left\{ \partial_{(x,a)} f(t,rX,\mathcal{L}(rX)) + r\partial^2_{(x,a)(x,a)}f(t,rX,\mathcal{L}(rX))X \right.\right.\\
&\left.\left.+ r\Tilde{\mathbb{E}}\left[  \partial_{\nu}\partial_{(x,a)} f(t,rX,\mathcal{L}(rX))(r\tX) \tX \right] \right\}^{\top} Y\right] \\
=& \mathbb{E}\left[ \left\{ \partial_{(x,a)} f(t,rX,\mathcal{L}(rX)) + r\partial^2_{(x,a)(x,a)}f(t,rX,\mathcal{L}(rX))X \right.\right.\\
&\left.\left.+ r\Tilde{\mathbb{E}}\left[  \partial_{\nu}\partial_{(x,a)} f(t,r\tX,\mathcal{L}(rX))(rX)^{\top} \tX \right] \right\}^{\top} Y\right], \\
\end{aligned}
\end{equation*}
where the second equality used \eqref{eq:curl_free}. Thus   for  a.e.~$t\in [0,T]$,
\begin{equation*}
\frac{d}{d\varepsilon} \tilde F(t,X+\varepsilon Y)\vert_{\varepsilon=0} = \int_0^1 \frac{d}{dr}\mathbb{E}\left[ r\partial_{(x,a)} f(t,rX,\mathcal{L}(rX))^{\top} Y \right] dr = \mathbb{E}\left[ \partial_{(x,a)} f(t,X,\mathcal{L}(X))^{\top} Y\right].
\end{equation*}
Next we verify the continuity of $L^2(\Omega,\mathcal{F},\mathbb{P};\mathbb{R}^{n+k})\ni X \mapsto \partial_{(x,a)} f(t,X,\mathcal{L}(X)) \in L^2(\Omega,\mathcal{F},\mathbb{P};\mathbb{R}^{n+k})$. Let $\{X_n\}_{n\in\mathbb{N}^*}$ be a sequence of random variables in $L^2(\Omega,\mathcal{F},\mathbb{P};\mathbb{R}^{n+k})$ and converges to $X$ in $L^2$, then $\mathcal{L}(X_n)$ converges to $\mathcal{L}(X)$ under $\mathcal{W}_2$. By the continuity of $\partial_{(x,a)} f(t,\cdot)$ and the continuous mapping theorem \cite[Lemma 5.3]{kallenbert2021foundations}, $\partial_{(x,a)} f(t,X_n,\mathcal{L}(X_n))$ converges to $\partial_{(x,a)} f(t,X,\mathcal{L}(X))$ in probability. The  linear growth of $\partial_{(x,a)} f(t,\cdot)$ implies that $\{ \vert \partial_{(x,a)} f(t,X_n,\mathcal{L}(X_n))\vert^2\}_{n\in\mathbb{N}^*}$ is uniformly integrable, which along with 
Vitali's theorem yields that 
$\partial_{(x,a)} f(t,X_n,\mathcal{L}(X_n))$ converges to $\partial_{(x,a)} f(t,X,\mathcal{L}(X))$ in $L^2$. Therefore, for  a.e.~$t\in [0,T]$, $\tilde F(t,\cdot)$ is Fr\'echet differentiable, 
$\partial_{\nu}F(t,\mathcal{L}(X))(X) =\partial_{(x,a)} f(t,X,\mathcal{L}(X))$,
which implies that $\partial_{\nu}F(t,\nu)(x,a)=\partial_{(x,a)} f(t,x,a,\nu)$, $\nu$-almost surely. This shows that $F$ satisfies \eqref{eq:carmona_condition}.

For the regularity of $F$, applying \cite[Proposition 5.51]{carmona2018probabilistic1}, for a.e.~$t\in [0,T]$, 
\begin{equation*}
\frac{\delta F}{\delta \nu}(t,\nu)(x,a) = f(t,x,a,\nu)-\int_{\RR^n\times\RR^k}f(t,x',a',\nu)\nu(dx',da').
\end{equation*}
The regularity of $f$ then implies the desired regularity of $\frac{\delta F}{\delta \nu}$. This finishes the proof.
\end{proof}

To prove Theorem \ref{thm:construction_F_G},
we first extend the Green’s theorem for annuli in   Wasserstein spaces established in \cite[Theorem 5.32]{gangbo2011differential} to functionals with   weaker regularity (see Remark \ref{rmk:green_regularity}).
Specifically, let  $h\in\cC^{2,2}(\RR^p\times\cP_2(\RR^p))$ be  given,
and for each $\mu\in\cP_2(\RR^p)$, define $\Gamma_{\mu}:\Tan_{\mu}(\cP_2(\RR^p))\to\RR$ and $d\Gamma_{\mu}:\Tan_{\mu}(\cP_2(\RR^p))\times\Tan_{\mu}(\cP_2(\RR^p))\to\RR$ such that for all $v,w\in \Tan_{\mu}(\cP_2(\RR^p))$,
\begin{align}
\label{eq:differential_forms}
\begin{split}
\Gamma_{\mu}(v)\coloneqq & \int_{\RR^p} \partial_x h(x,\mu)^{\top}v(x)\mu(dx),\\
d\Gamma_{\mu}(v,w) \coloneqq & \int_{\RR^p\times\RR^p} v(x)^{\top}\left\{ \partial_{\mu} \partial_x h(y,\mu)(x) - [\partial_{\mu}\partial_x h(x,\mu)(y)]^{\top} \right\} w(y)\mu(dx)\mu(dy).
\end{split}
\end{align}

Given a curve  $\bmu=(\mu_r)_{r\in (0,1)}\in AC(0,1;\cP_2(\RR^p))$ with velocity $v$,
and $\delta\in(0,1)$.
For each 
 $s\in[\delta,1]$, define  $D_s:\RR^p\to\RR^p$ by $D_s(x)\coloneqq sx$,   define the surface
$S_{\delta}:[0,1]\times [\delta,1]\to  \cP_2(\RR^p)$ associated with $\bmu$ by 
$S_{\delta}(r,s)=\mu_r\circ D_s^{-1}$, and define  $\partial S_{\delta}$ to be the boundary of $S_{\delta}$, which is given by the union of the positively oriented curves $S_{\delta}(\cdot,\delta)$ and $S_{\delta}(1,\cdot)$, and the negatively oriented curves $S_{\delta}(\cdot,1)$ and $S_{\delta}(0,\cdot)$. By \cite[Chapter 5.4]{gangbo2011differential}, for each $s\in[\delta,1]$, $S_{\delta}(\cdot,s)\in AC(0,1;\cP_2(\RR^p))$ has velocity $(v_r^s)_{r\in (0,1)}$ with $v_r^s(x)\coloneqq sv_r(\frac{x}{s})$, and for each $r\in [0,1]$, $S_{\delta}(r,\cdot)\in AC(\delta,1;\cP_2(\RR^p))$ has velocity $(w_r^s)_{s\in (\delta,1)}$ with $w_r^s(x)\coloneqq \frac{x}{s}$. To simplify the notation, we write  $\mu_r^s= S_{\delta}(r,s)$ for $(r,s)\in[0,1]\times[\delta,1]$.

Let $\cN\coloneqq \{r\in (0,1)\mid v_r\notin \Tan_{\mu_r}(\cP_2(\RR^p))\}$, which has   Lebesgue measure $0$. Define $V:(0,1)\backslash\cN \times[\delta,1]\to\RR$ and $W:[0,1]\times(\delta,1)\to \RR$ by
\begin{align*}
V(r,s)\coloneqq  \Gamma_{\mu_r^s}(v_r^s) , \quad \forall (r,s)\in  (0,1)\backslash\cN\times[\delta,1];\quad W(r,s)\coloneqq  \Gamma_{\mu_r^s}(w_r^s) ,\quad \forall (r,s)\in  [0,1]\times(\delta,1).
\end{align*}
The next lemma establishes the differentiability of $V$ and $W$.

\begin{lemma}\label{lemma:V_W_deriv}
Let $h\in\cC^{2,2}(\RR^p\times\cP_2(\RR^p))$ and $\delta \in (0,1)$.
For each $r\in(0,1)\backslash\cN$, $V(r,\cdot)$ is absolutely continuous, and for all $s\in(\delta,1)$,
\begin{equation}\label{eq:V_deriv}
\begin{aligned}
 \partial_s V(r,s) &=\int_{\RR^p} \partial_x h(sx,\mu_r^s)^{\top} v_r(x)\mu_r(dx) + \int_{\RR^p} (sx)^{\top}\partial^2_{xx}h(sx,\mu_r^s)v_r(x)\mu_r(dx) \\
&\quad + \int_{\RR^p\times\RR^p} (sy)^{\top}\partial_{\mu}\partial_x h(sx,\mu_r^s )(sy) v_r(x)\mu_r(dx)\mu_r(dy).
\end{aligned}
\end{equation}
For all $s\in (\delta,1)$, $W(\cdot,s)$ is absolutely continuous, and for a.e.~$t\in (0,1)$,
\begin{equation}\label{eq:W_deriv}
\begin{aligned}
 \partial_r W(r,s) &= \int_{\RR^p}(sx)^{\top}\partial^2_{xx}h(sx,\mu_r^s)v_r(x)\mu_r(dx) + \int_{\RR^p} \partial_xh(sx,\mu_r^s)^{\top}v_r(x)\mu_r(dx) \\
&\quad + \int_{\RR^p\times\RR^p} (sy)^{\top}[\partial_{\mu}\partial_x h(sy,\mu_r^s)(sx)]^{\top}v_r(x)\mu_r(dx)\mu_r(dy).
\end{aligned}
\end{equation}
Consequently, for all $s\in(\delta,1)$ and a.e.~$t\in(0,1)$,
$ \partial_r W(r,s) - \partial_s V(r,s)  = d\Gamma_{\mu_r^s}(v_r^s,w_r^s).$ 
\end{lemma}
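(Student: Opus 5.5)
The plan is to rewrite both $V$ and $W$ as expectations over the fixed measure $\mu_r$ by pushing forward under $D_s$, and then differentiate under the integral sign. Since $\mu_r^s=\mu_r\circ D_s^{-1}$ and $v_r^s(sx)=sv_r(x)$, we have
\[
V(r,s)=\int_{\RR^p}\partial_x h(sx,\mu_r^s)^\top s\,v_r(x)\,\mu_r(dx).
\]
Similarly, $w_r^s(sx)=x$ gives
\[
W(r,s)=\int_{\RR^p}\partial_x h(sx,\mu_r^s)^\top x\,\mu_r(dx).
\]
In both cases the $s$-dependence sits only in the integrand and in the measure argument $\mu_r\circ D_s^{-1}$. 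The $r$-dependence of $W$ comes only through the curve $\mu_r$.

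\textbf{Step 1: the $s$-derivative of $V$.} Fix $r\notin\cN$ and lift to an atomless space, with $X\sim\mu_r$. Then $\mu_r^s=\mathcal{L}(sX)$, and
\[
V(r,s)=\mathbb{E}\big[s\,\partial_x h(sX,\mathcal{L}(sX))^\top v_r(X)\big].
\]
The map $s\mapsto sX$ is smooth in $L^2$ with derivative $X$. Using the chain rule for L-derivatives along $L^2$-differentiable curves (as in the proof of Theorem \ref{thm:alpha0_equiv}), the derivative of $s\mapsto\partial_x h(sX,\mathcal{L}(sX))$ is
\[
\partial^2_{xx}h\,X+\tilde{\mathbb{E}}\big[\partial_\mu\partial_x h(sX,\cdot)(s\tilde X)\tilde X\big].
\]
The product rule then gives the three terms in \eqref{eq:V_deriv}, after rewriting $s\cdot X=sx$ and $s\tilde X=sy$. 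We differentiate inside the expectation by dominated convergence, using two facts from the definition of $\cC^{2,2}$: $\partial^2_{xx}h$ and $\partial_\mu\partial_x h$ are bounded, and $\partial_x h$ has linear growth. Together these give an $L^1$ dominating function, because $\int|x||v_r|\,d\mu_r<\infty$. The resulting derivative is continuous in $s$, so $V(r,\cdot)$ is in fact $C^1$ on $[\delta,1]$ and in particular absolutely continuous.

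\textbf{Step 2: the $r$-derivative of $W$.} Fix $s$ and set
\[
\phi(x,\mu)\coloneqq\partial_x h(sx,\mu\circ D_s^{-1})^\top x,
\qquad\text{so that}\qquad
W(r,s)=\int\phi(x,\mu_r)\,\mu_r(dx).
\]
I would use the chain rule along absolutely continuous curves: for a functional $\Psi(\mu)=\int\phi(x,\mu)\,\mu(dx)$ with $\phi$ of class $C^1$ in $x$ and L-differentiable in $\mu$ with growth control,
\[
\frac{d}{dr}\Psi(\mu_r)=\int\partial_x\phi(x,\mu_r)^\top v_r(x)\,\mu_r(dx)+\iint\partial_\mu\phi(y,\mu_r)(x)^\top v_r(x)\,\mu_r(dx)\mu_r(dy)
\]
for a.e. $r$ (Gangbo–Tudorascu / Ambrosio–Gigli–Savaré). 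Computing $\partial_x\phi=s\,\partial^2_{xx}h\,x+\partial_x h$ and $\partial_\mu\phi(y,\mu)(x)=s\,[\partial_\mu\partial_x h(sy,\cdot)(sx)]^\top y$ yields \eqref{eq:W_deriv}. For absolute continuity, I would bound $|\partial_r W|\le C(1+\|\mu_r\|_2)\|v_r\|_{L^2(\mu_r)}$, which lies in $L^1(0,1)$.

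\textbf{Main obstacle.} The key difficulty is justifying this chain rule under only the $\cC^{2,2}$ regularity (continuity plus growth/boundedness), rather than the stronger hypotheses in \cite{gangbo2011differential}. My first attempt would be to realise the curve by a probabilistic representation. By the superposition principle there is a measure on paths with $\dot X_r=v_r(X_r)$ and $\mathcal{L}(X_r)=\mu_r$, and then $r\mapsto X_r$ is absolutely continuous in $L^2$. The finite-dimensional chain rule for the lifted functional $\mathbb{E}[\phi(X_r,\mathcal{L}(X_r))]$ then applies, with dominated convergence justified by the bounded second derivatives.

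\textbf{Step 3: the difference.} Subtract \eqref{eq:V_deriv} from \eqref{eq:W_deriv}. The $\partial_x h$ terms and the $\partial^2_{xx}h$ terms cancel. What remains is
\[
\iint (sy)^\top\Big\{[\partial_\mu\partial_x h(sy,\mu_r^s)(sx)]^\top-\partial_\mu\partial_x h(sx,\mu_r^s)(sy)\Big\}v_r(x)\,\mu_r(dx)\mu_r(dy).
\]
Now push forward by $D_s$, writing $x'=sx$ and $y'=sy$, and use $v_r^s(x')=s\,v_r(x)$ and $w_r^s(y')=y$. Up to the $s$-factors, which match, the expression becomes
\[
\iint v_r^s(x')^\top\big\{\partial_\mu\partial_x h(y',\cdot)(x')-[\partial_\mu\partial_x h(x',\cdot)(y')]^\top\big\}w_r^s(y')\,\mu_r^s(dx')\mu_r^s(dy'),
\]
after transposing the scalar. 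This is exactly $d\Gamma_{\mu_r^s}(v_r^s,w_r^s)$ as defined in \eqref{eq:differential_forms}.
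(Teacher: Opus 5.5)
Your Steps 1 and 3 follow the paper: you lift to $X\sim\mu_r$ with $\mu_r^s=\mathcal{L}(sX)$, differentiate $s\mapsto s\,\partial_x h(sX,\mathcal{L}(sX))^\top v_r(X)$ by separating the $x$-increment from the measure increment, get a continuous derivative, and finally subtract and push forward by $D_s$. For $W(\cdot,s)$ you take a genuinely different route. The paper writes $W(r,s)=s^{-1}\hat h(\mu_r^s)$ and expands $\hat h$ to first order along optimal couplings of $(\mu_r,\mu_{r+\Delta r})$ with an $o(\mathcal{W}_2)$ remainder, following the proof of \cite[Theorem 5.64]{carmona2018probabilistic1}. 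It then passes to the limit with \cite[Proposition 5.27]{gangbo2011differential}, i.e.\ the rescaled optimal plans converge to $\mu_r\circ(\mathrm{Id},v_r)^{-1}$ for a.e.~$r$. Absolute continuity comes from the separate estimate $|W(r+\Delta r,s)-W(r,s)|\le C\,\mathcal{W}_2(\mu_r,\mu_{r+\Delta r})$. You instead use the superposition principle to realise the curve on one probability space with $\mathcal{L}(X_r)=\mu_r$ and $X_{r'}-X_r=\int_r^{r'}v_\tau(X_\tau)\,d\tau$. Then $r\mapsto X_r$ is absolutely continuous in $L^2$ and, by Lebesgue differentiation of Bochner integrals, differentiable in $L^2$ for a.e.~$r$ with derivative $v_r(X_r)$. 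Now \eqref{eq:W_deriv} is just the Hilbert-space chain rule for the Fr\'echet-differentiable lift $X\mapsto s^{-1}\hat h(\mathcal{L}(sX))$, whose gradient is \eqref{eq:hat_h_deriv}; this is the same fact the paper relies on (enlarge the path space by an independent uniform variable if you want it atomless). Your route needs no uniformity of the first-order remainder over varying optimal couplings, and it does not use that $v_r$ is the tangent velocity. The paper's route stays inside the optimal-transport framework of \cite{gangbo2011differential} and avoids the superposition theorem.

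Three things to tighten.

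First, what resolves your ``main obstacle'' is exactly this infinite-dimensional chain rule. A pathwise finite-dimensional argument would still have to differentiate $r\mapsto\phi(x,\mu_r)$ in the measure argument, which is the original difficulty.

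Second, absolute continuity of $W(\cdot,s)$ does not follow from an $L^1$ bound on an a.e.~derivative (the Cantor function is a counterexample). Use instead the Lipschitz estimate your lift gives, $|W(r',s)-W(r,s)|\le C\|X_{r'}-X_r\|_{L^2}\le C\int_r^{r'}|\mu'|(\tau)\,d\tau$. Here $C$ comes from the linear growth of $\partial_\mu\hat h$ together with $\sup_r\mathcal{W}_2(\mu_r,\delta_0)<\infty$.

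Third, fix one convention for $\partial_\mu\partial_x h$. Your displayed derivatives, $\Tilde{\mathbb{E}}[\partial_\mu\partial_x h(sX,\cdot)(s\tX)\tX]$ and $\partial_\mu\phi(y,\mu)(x)=s[\partial_\mu\partial_x h(sy,\cdot)(sx)]^\top y$, treat $\partial_\mu\partial_x h(y,\mu)(x)$ as a Jacobian acting on the perturbation. With them the nonlocal terms come out with $\partial_\mu\partial_x h$ transposed relative to \eqref{eq:V_deriv}--\eqref{eq:W_deriv}. The lemma, \eqref{eq:hat_h_deriv} and $d\Gamma$ in \eqref{eq:differential_forms} use the other convention: the derivative of $\mu\mapsto\partial_x h(y,\mu)$ along a perturbation $\Tilde{Z}$ of $\tX\sim\mu$ is $\Tilde{\mathbb{E}}[\partial_\mu\partial_x h(y,\mu)(\tX)^\top\Tilde{Z}]$. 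In Step 3 you silently switch to those formulas. The identity $\partial_rW-\partial_sV=d\Gamma_{\mu_r^s}(v_r^s,w_r^s)$, with $d\Gamma$ as defined, depends on this choice, so use the paper's convention throughout. Step 3 then goes through as you wrote it.
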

\begin{proof}
By the definitions of $V$, $ \Gamma_{\mu_r^s} $, and $v_r^s$, 
\begin{align*}
V(r,s) = &\int_{\RR^p} \partial_x h(x,\mu_r^s)^{\top} v_r^s(x) \mu_r^s(dx) = s\int_{\RR^p} \partial_x h(sx,\mu_r^s)^{\top} v_r(x)\mu_r(dx), \quad (r,s)\in  (0,1)\backslash\cN\times[\delta,1].
\end{align*}
Fix $r\in(0,1)\backslash\cN$, let $X$ be a random variable on an atomless probability space $(\Omega,\cF,\PP)$ with distribution $\mu_r$, and let $Y$ be its independent copy. Then
\begin{align*}
& V(r,s+\Delta s) - V(r,s)\\
=& \EE\left[(s+\Delta s)\partial_x h((s+\Delta s)X,\cL((s+\Delta s)X))^{\top}v_r(X) - s\partial_x h(sX,\cL(sX))^{\top}v_r(X)\right]\\
=& \EE\left[(s+\Delta s)\partial_x h((s+\Delta s)X,\cL((s+\Delta s)X))^{\top}v_r(X) - s\partial_x h(sX,\cL((s+\Delta s)X))^{\top}v_r(X)\right] \\
&+\EE\left[ s\Delta s \int_0^1  Y^{\top} \partial_{\mu}\partial_x h(sX,\cL((s+\rho\Delta s)X))((s+\rho\Delta s)Y)v_r(X) d\rho\right],
\end{align*}
which along with the regularity of $\partial^2_{xx}h$  implies that $V(r,\cdot)$ is continuously differentiable, and
\begin{equation*}
\begin{aligned}
& \partial_s V(r,s) = \EE\left[\partial_x h(sX,\cL(sX))^{\top} v_r(X)+ s X^{\top}\partial^2_{xx}h(sX,\cL(sX))v_r(X) \right.\\
&\quad\left.+ sY^{\top}\partial_{\mu}\partial_x h(sX,\cL(sX))(sY) v_r(X)\right],
\end{aligned}
\end{equation*}
which proves \eqref{eq:V_deriv}. 
By the  linear growth of $\partial_x h$ and the boundedness of $\partial^2_{xx}h$ and $\partial_{\mu}\partial_x h$, $\partial_s V(r,\cdot)$ is integrable. This proves that $V(r,\cdot)$ is absolutely continuous for each $r\in (0,1)\backslash\cN$.

We proceed to analyze $W$. By the definitions of $W$, $ \Gamma_{\mu_r^s} $, and $w_r^s$, 
\begin{align*}
W(r,s) =& \int_{\RR^p} \partial_x h(x,\mu_r^s)^{\top} w_r^s(x) \mu_r^s(dx) = \int_{\RR^p} \partial_x h(sx,\mu_r^s)^{\top} x\mu_r(dx),\quad (r,s)\in  [0,1]\times(\delta,1).
\end{align*}
Fix $s\in (\delta,1)$ and $r\in (0,1)$, for $\Delta r\in [-r,1-r]$, let $(X_r,X_{r+\Delta r})$ be random variables with joint distribution $\pi_{r,\Delta r}\in\Pi_2^{opt}(\mu_r,\mu_{r+\Delta r})$, where $\in\Pi_2^{opt}(\mu_r,\mu_{r+\Delta r})$ is the set of optimal couplings between $\mu_r$ and $\mu_{r+\Delta r}$. Then $\pi_{r,\Delta r}\circ(D_s^{-1},D_s^{-1})\in \Pi_2^{opt}(\mu_r^s,\mu_{r+\Delta r}^s)$. Moreover, define  $\hat{h}:\cP_2(\RR^p)\to\RR$ by $\hat h(\mu)\coloneqq \int_{\RR^p}\partial_x h(x,\mu)^{\top}x\mu(dx)$ for all $\mu \in\cP_2(\RR^p)$. By \cite[Chapter 5.2.2, Example 3]{carmona2018probabilistic1}, the L-derivative of $\hat h$ satisfies 
for all $ (x,\mu) \in \RR^p\times\cP_2(\RR^p)$,
\begin{align}\label{eq:hat_h_deriv}
\partial_{\mu}\hat h(\mu)(x) =\partial^2_{xx}h(x,\mu)x + \partial_x h(x,\mu) + \int_{\RR^p}  \partial_{\mu}\partial_x h(y,\mu)(x) y\mu(dy). 
\end{align}
Observe that 
$W(r,s)=\frac{1}{s}
  \hat h(\mu_r^s)$.
   By definition of the L-derivative,
\begin{align}
\label{eq:W_diff1}
\begin{split}
&W(r+\Delta r,s) - W(r,s)
\\
&\quad = \int_0^1 \EE^{\pi_{r,\Delta r}}\left[\partial_{\mu}\hat h(\cL((1-\rho)sX_r + \rho sX_{r+\Delta r})) ((1-\rho)sX_r + \rho sX_{r+\Delta r})^{\top} (X_{r+\Delta r}-X_r) \right] d\rho,
\end{split}
\end{align}
which along with   the linear growth of $\partial_x h$ and the boundedness of $\partial^2_{xx}h$ and $\partial_{\mu}\partial_x h$, implies that $\vert W(r+\Delta r,s)-W(r,s)\vert \leq C \cW_2(\mu_r,\mu_{r+\Delta r})$ for some $C>0$.
The absolute continuity of $\bmu$ then yields 
the absolute continuity  of  $W(\cdot,s)$.
Moreover, following the proof of \cite[Theorem 5.64]{carmona2018probabilistic1},
\begin{align}
&\frac{W(r+\Delta r,s) - W(r,s)}{\Delta r} = \frac{1}{s\Delta r}\left(\hat h(\mu_{r+\Delta r}^s) - \hat h(\mu_r^s) \right) 
\nonumber
\\
&\quad = \EE^{\pi_{r,\Delta r}}\left[\partial_{\mu}\hat h(\cL(sX_r))(sX_r)^{\top}\frac{X_{r+\Delta r}-X_r}{\Delta r} \right] + \frac{o(\cW_2(\mu_r,\mu_{r+\Delta r}))}{\Delta r}.\label{eq:W_diff2}
\end{align}
 By \cite[Proposition 5.27]{gangbo2011differential}, $\lim_{\Delta r\to 0}\pi_{r,\Delta r}\circ (\pi^1,\frac{\pi^2-\pi^1}{\Delta r})^{-1}=\mu_r\circ (Id,v_r)^{-1}$ under $\cW_2$ for a.e.~$r\in (0,1)$, where $\pi^1,\pi^2$ denote the projections onto the first and second components, respectively. Since $\lim_{\Delta r\to 0}\frac{o(\cW_2(\mu_r,\mu_{r+\Delta r}))}{\Delta r} = 0$ for a.e.~$r\in(0,1)$, \eqref{eq:W_diff2} implies that for a.e.~$r\in(0,1)$,
\begin{equation*}
\begin{aligned}
& \partial_r W(r,s) = \int_{\RR^p} \partial_{\mu}\hat h(\mu_r^s)(sx)^{\top}v_r(x)\mu_r(dx),
\end{aligned}
\end{equation*}
which along with \eqref{eq:hat_h_deriv} yields \eqref{eq:W_deriv}.

Finally, by the definitions of $d\Gamma_{\mu_r^s}$, $v_r^s$, and $w_r^s$, for each $(r,s)\in (0,1)\backslash\cN\times(\delta,1)$,
\begin{equation}\label{eq:dGamma}
d\Gamma_{\mu_r^s}(v_r^s,w_r^s)=\int_{\RR^p\times\RR^p} v_r(x)^{\top} \left[\partial_{\mu}\partial_x h(sy,\mu_r^s)(sx)-\partial_{\mu}\partial_x h(sx,\mu_r^s)(sy)^{\top} \right] (sy) \mu_r(dx)\mu_r(dy),
\end{equation}
which along with \eqref{eq:V_deriv} and    \eqref{eq:W_deriv}  yields
for each $s\in(\delta,1)$ and a.e.~$t\in(0,1)$,
$  \partial_r W(r,s) - \partial_s V(r,s)  = d\Gamma_{\mu_r^s}(v_r^s,w_r^s).
$ This finishes the proof.
\end{proof}

Using Lemma \ref{lemma:V_W_deriv},
we establish a version of Green’s theorem on Wasserstein spaces, identifying the integral of $\Gamma$ along the boundary $\partial S_\delta$ with the integral of $d\Gamma$ over the domain $S_\delta$.

\begin{proposition}\label{prop:green_formula}
Let $h\in\cC^{2,2}(\RR^p\times\cP_2(\RR^p))$, $\delta \in (0,1)$,
and $\bmu \in AC(0,1;\cP_2(\RR^p))$ be a curve with velocity $v$. 
Define the integrals 
\begin{align*}
\int_{S_{\delta}} d\Gamma &\coloneqq \int_0^1 \int_{\delta}^1 d\Gamma_{\mu_r^s}(v_r^s,w_r^s)ds dr,\\
\int_{\partial S_{\delta}}\Gamma
&\coloneqq \int_{\delta}^1 \Gamma_{\mu_1^s}(w_1^s)ds - \int_0^1 \Gamma_{\mu_r^1}(v_r^1)dr - \int_{\delta}^1 \Gamma_{\mu_0^s}(w_0^s)ds + \int_0^1 \Gamma_{\mu_r^{\delta}}(v_r^{\delta})dr,
\end{align*}
where 
$ \Gamma_\mu$ and 
$d\Gamma_\mu$ are defined as in \eqref{eq:differential_forms}. 
Then $\int_{S_{\delta}} d\Gamma = \int_{\partial S_{\delta}}\Gamma$.
\end{proposition}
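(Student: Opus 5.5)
The plan is to integrate the pointwise identity of Lemma \ref{lemma:V_W_deriv} over the rectangle $[0,1]\times[\delta,1]$ and use Fubini's theorem together with the fundamental theorem of calculus in each variable separately. By Lemma \ref{lemma:V_W_deriv}, for each $s\in(\delta,1)$ and a.e.~$r\in(0,1)$,
\[
d\Gamma_{\mu_r^s}(v_r^s,w_r^s)=\partial_r W(r,s)-\partial_s V(r,s).
\]
The argument then splits into the two terms on the right-hand side.

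\textbf{Step 1: the $W$-term.} For fixed $s\in(\delta,1)$, the map $W(\cdot,s)$ is absolutely continuous. Hence
\[
\int_0^1\partial_rW(r,s)\,dr=W(1,s)-W(0,s)=\Gamma_{\mu_1^s}(w_1^s)-\Gamma_{\mu_0^s}(w_0^s).
\]
Here $W(0,s)$ and $W(1,s)$ are understood through the endpoint limits $\mu_0,\mu_1$ of $\bmu$. Continuity of $W(\cdot,s)$ at the endpoints follows from the Lipschitz bound $|W(r+\Delta r,s)-W(r,s)|\le C\cW_2(\mu_r,\mu_{r+\Delta r})$ obtained in the proof of the lemma. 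Integrating in $s$ over $[\delta,1]$ produces the two $w$-terms of $\int_{\partial S_\delta}\Gamma$.

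\textbf{Step 2: the $V$-term.} For fixed $r\notin\cN$, the map $V(r,\cdot)$ is absolutely continuous (indeed $C^1$) on $[\delta,1]$. Hence
\[
\int_\delta^1\partial_sV(r,s)\,ds=\Gamma_{\mu_r^1}(v_r^1)-\Gamma_{\mu_r^\delta}(v_r^\delta),
\]
and integrating in $r$ yields the two $v$-terms with the signs required by the orientation of $\partial S_\delta$.

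\textbf{Step 3: justifying Fubini.} Combining Steps 1 and 2 requires $\partial_rW$ and $\partial_sV$ to be jointly measurable and in $L^1([0,1]\times[\delta,1])$, so that the order of integration can be exchanged. I would bound the explicit formulas \eqref{eq:V_deriv} and \eqref{eq:W_deriv} using:
\begin{itemize}
\item the linear growth of $\partial_xh$;
\item the boundedness of $\partial_{xx}^2h$ and $\partial_\mu\partial_xh$;
\item $s\le1$.
\end{itemize}
This gives
\[
|\partial_sV|+|\partial_rW|\le C\bigl(1+M_2(\mu_r)\bigr)\|v_r\|_{L^2(\mu_r)},
\]
where $M_2(\mu_r)$ denotes the second moment of $\mu_r$. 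Since $r\mapsto M_2(\mu_r)$ is bounded on $[0,1]$ (the curve is $\cW_2$-continuous on a compact interval) and $\|v_r\|_{L^2(\mu_r)}=|\mu'|(r)\in L^2(0,1)\subset L^1$, the right-hand side is integrable. Measurability in $(r,s)$ follows because both expressions are given by explicit integrals of Borel functions, namely $v$ and the continuous derivatives of $h$, against $\mu_r$, with $r\mapsto\mu_r$ continuous. The same bound also shows $d\Gamma\in L^1$, so $\int_{S_\delta}d\Gamma$ is well defined.

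\textbf{Conclusion and main obstacle.} With Fubini justified, Steps 1 and 2 combine to give $\int_{S_\delta}d\Gamma=\int_{\partial S_\delta}\Gamma$. I expect the main obstacle to be the integrability and measurability bookkeeping in Step 3, together with the endpoint behaviour of $W(\cdot,s)$ at $r=0,1$. The algebraic cancellation itself is already supplied by Lemma \ref{lemma:V_W_deriv}.
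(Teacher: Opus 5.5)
Your proposal is correct and follows essentially the same route as the paper. You integrate the identity $\partial_r W-\partial_s V=d\Gamma_{\mu_r^s}(v_r^s,w_r^s)$ from Lemma~\ref{lemma:V_W_deriv} over $[0,1]\times[\delta,1]$, justify measurability and integrability via the linear growth of $\partial_x h$ and the boundedness of the second-order derivatives, and then apply Fubini and the fundamental theorem of calculus in each variable. Your explicit check that $\partial_r W$ and $\partial_s V$ are each in $L^1$, which is what allows the double integral to be split, is a detail the paper leaves implicit, since it only bounds $d\Gamma$ (through its positive and negative parts) and the boundary terms.
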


\begin{proof}
For the well-definedness of  $\int_{S_{\delta}} d\Gamma$, define $d\Gamma^+,d\Gamma^-:(0,1)\times(\delta,1)\to[0,+\infty]$ as: for all $(r,s)\in (0,1)\times(\delta,1)$,
\begin{align*}
d\Gamma^+(r,s)\coloneqq & \int_{\RR^p\times\RR^p} \left\{v_r(x)^{\top} \left[\partial_{\mu}\partial_x h(sy,\mu_r^s)(sx)-\partial_{\mu}\partial_x h(sx,\mu_r^s)(sy)^{\top} \right] (sy)\right\}^+ \mu_r(dx)\mu_r(dy), \\
d\Gamma^-(r,s)\coloneqq & \int_{\RR^p\times\RR^p} \left\{v_r(x)^{\top} \left[\partial_{\mu}\partial_x h(sy,\mu_r^s)(sx)-\partial_{\mu}\partial_x h(sx,\mu_r^s)(sy)^{\top} \right] (sy)\right\}^- \mu_r(dx)\mu_r(dy).
\end{align*}
By the Borel measurability of the integrands, and the continuity of $(0,1)\ni r\mapsto\mu_r$, \cite[Proposition 7.29]{bertsekas2007stochastic} implies that $d\Gamma^+$ and $d\Gamma^-$ are measurable. Moreover, since $\partial_{\mu}\partial_x h$ is bounded, there exists $C>0$ such that,
\begin{align*}
&\left\vert \int_0^1\int_{\delta}^1 d\Gamma^+(r,s) dsdr \right\vert+\left\vert \int_0^1\int_{\delta}^1 d\Gamma^-(r,s) dsdr \right\vert \\
\leq & C \left(\int_0^1 \Vert v_r\Vert_{L^2(\RR^p,\mu_r;\RR^p)}^2 dr \right)^{\frac{1}{2}}\left( \int_0^1 \cW_2(\delta_0,\mu_r)^2 dr \right)^{\frac{1}{2}}<\infty,
\end{align*}
which together with \eqref{eq:dGamma} implies that $\int_{S_{\delta}}d\Gamma$ is well-defined. Applying similar arguments with the linear growth of $\partial_x h$ shows   that $\int_{\partial_{S_{\delta}}}\Gamma$ is well-defined. Then by  Lemma \ref{lemma:V_W_deriv} and Fubini's theorem,  
\begin{align*}
\int_{S_{\delta}} d\Gamma &= \int_0^1 \int_{\delta}^1 d\Gamma_{\mu_r^s}(v_r^s,w_r^s)dsdr = \int_0^1 \int_{\delta}^1 \left(\partial_r W(r,s) - \partial_s V(r,s) \right)dsdr \\
=& \int_{\delta}^1 \Gamma_{\mu_1^s}(w_1^s)ds - \int_0^1 \Gamma_{\mu_r^1}(v_r^1)dr - \int_{\delta}^1 \Gamma_{\mu_0^s}(w_0^s)ds + \int_0^1 \Gamma_{\mu_r^{\delta}}(v_r^{\delta})dr = \int_{\partial S_{\delta}}\Gamma.
\end{align*}
This proves the desired identity. 
\end{proof}

\begin{remark}\label{rmk:green_regularity}
To interpret Proposition~\ref{prop:green_formula} as a Green’s formula in differential geometry, recall from \cite{gangbo2011differential} that $\Gamma_\mu$ can be viewed as a differential 1-form on the Wasserstein space $\mathcal{P}_2(\mathbb{R}^p)$, and  $d\Gamma_{\mu}$ as  its exterior derivative. Proposition~\ref{prop:green_formula} then states that the surface integral of the 1-form along the boundary $\partial S_\delta$  equals the volume integral of its exterior derivative over the domain $ S_\delta$.

An analogous Green's formula 
has been established in
\cite[Theorem 5.32]{gangbo2011differential}
 under stronger regularity assumptions on $\Gamma_\mu$, namely by requiring that for each 
$\mu\in\cP_2(\RR^p)$, there exists $B_{\mu}\in L^{\infty}(\RR^p,\mu;\RR^{p\times p})$ such that as $\cW_2(\mu,\mu')\to 0$,
\begin{equation}\label{eq:regular1form}
\sup_{\pi\in \Pi_2^{opt}(\mu,\mu')}\int_{\RR^p\times\RR^p} \left\vert \partial_x h(x',\mu') - \partial_x h(x,\mu) - B_{\mu}(x)(x'-x)\right\vert^2 \pi(dx,dx') \leq o(\cW_2(\mu,\mu'))^2,
\end{equation}
(cf.~\cite[Definition 5.4]{gangbo2011differential}).
This restrictive condition excludes many simple choices of $h$. For instance, consider $p=1$ and 
$h(x,\mu) = x\int_{\RR} y\mu(dy)$ for $(x,\mu)\in \RR\times\cP_2(\RR)$. 
 Then  $h\in\cC^{2,2}(\RR\times\cP_2(\RR))$ and   $\partial_x h(x,\mu) =\int_{\RR} y\mu(dy) $.
Consider  $\mu=\delta_0$ and   $\mu_{\varepsilon}=\delta_{\varepsilon}$ for $\varepsilon>0$, which satisfy $\cW_2(\mu,\mu_{\varepsilon})=\varepsilon$ and $\Pi_2^{opt}(\mu,\mu_{\varepsilon})=\{\delta_0\otimes\delta_{\varepsilon}\}$.
If   $h$ satisfies \eqref{eq:regular1form}, then   for all $\varepsilon>0$,
\begin{equation*}
\sup_{\pi\in \Pi_2^{opt}(\mu,\mu_{\varepsilon})}\int_{\RR^p\times\RR^p} \left\vert \partial_x h(x',\mu_{\varepsilon}) - \partial_x h(x,\mu) - B_{\mu}(x)(x'-x)\right\vert^2 \pi(dx,dx') = \vert \varepsilon - B_{\mu}(0)\varepsilon\vert^2,
\end{equation*}
which along with the requirement that it is of magnitude $o(\varepsilon^2)$ forces  $B_\mu(0)=1$. Now consider  $\nu_{\varepsilon}=\frac{1}{2}\delta_{\varepsilon}+\frac{1}{2}\delta_{-\varepsilon}$ for $\varepsilon>0$, which satisfies  $\cW_2(\mu,\nu_{\varepsilon})=\varepsilon$ and $\Pi_2^{opt}(\mu,\nu_{\varepsilon})=\{\delta_0\otimes\frac{\delta_{\varepsilon}+\delta_{-\varepsilon}}{2}\}$. The fact that   $B_{\mu}(0)=1$ implies that 
\begin{equation*}
\sup_{\pi\in \Pi_2^{opt}(\mu,\nu_{\varepsilon})}\int_{\RR^p\times\RR^p} \left\vert \partial_x h(x',\nu_{\varepsilon}) - \partial_x h(x,\mu) - B_{\mu}(x)(x'-x)\right\vert^2 \pi(dx,dx') =  \varepsilon^2,
\end{equation*}
which contradicts to the requirement  that it is of magnitude $o(\varepsilon^2)$.

Proposition \ref{prop:green_formula}  generalizes \cite[Theorem 5.32]{gangbo2011differential}  to  the setting with arbitrary  $h \in \mathcal{C}^{2,2}(\mathbb{R}\times \mathcal{P}_2(\mathbb{R}))$,
which by  \cite[Theorem 5.64]{carmona2018probabilistic1} satisfies 
\begin{equation*}
\sup_{\pi\in \Pi_2^{opt}(\mu,\mu')}\left\vert\int_{\RR^p\times\RR^p}  \left\{\partial_x h(x',\mu') - \partial_x h(x,\mu) - \hat B_{\mu}(x)(x'-x) \right\}\pi(dx,dx') \right\vert \leq o(\cW_2(\mu,\mu')),
\end{equation*}
where $\hat B_{\mu}(x) = \partial^2_{xx} h(x,\mu) + \int_{\RR^p}\partial_{\mu}\partial_x h(y,\mu)(x)^{\top}\mu(dy)$ is the L-derivative  of $\hat h(\mu)\coloneqq \int_{\RR^p} \partial_x h(x,\mu)\mu(dx)$. 
This weaker regularity assumption on $h$ makes both the exterior derivative $d\Gamma_\mu$ and the partial derivatives of $V$ and $W$ in Lemma~\ref{lemma:V_W_deriv} more involved than in \cite{gangbo2011differential}: they now involve double integrals over $\mathbb{R}^p \times \mathbb{R}^p$, rather than single integrals as in \cite[Chapter 5]{gangbo2011differential}.

\end{remark}

Using Proposition \ref{prop:green_formula}, the following proposition shows that 
the integrals in \eqref{eq:mean_field_potential} are invariant with respect to the choice of curve.
\begin{proposition}
\label{prop:closed_exact}
Let  $h\in\cC^{2,2}(\RR^p\times\cP_2(\RR^p))$ satisfy $\partial_{\mu}\partial_x h(x,\mu)(x')=[\partial_{\mu}\partial_x h(x',\mu)(x)]^{\top}$ for all $(x,x',\mu)\in\RR^p\times\RR^p\times\cP_2(\RR^p)$. Let $\bmu,\bmu' \in AC(0,1;\cP_2(\RR^p))$ be curves with velocities $v$ and $v'$, respectively, and satisfy  $\mu_0=\mu_0'$ and $\mu_1=\mu_1'$.
Then $\int_0^1 \partial_x h(x,\mu_r)^{\top} v_r(x) \mu_r(dx)dr= \int_0^1 \partial_x h(x,\mu_r')^{\top} v_r'(x)\mu_r'(dx) dr$.
\end{proposition}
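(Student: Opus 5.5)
The plan is to reduce curve-independence to the Green formula of Proposition \ref{prop:green_formula}, applied to the surfaces obtained by dilating each curve toward $\delta_0$. Since $d\Gamma\equiv 0$ under the symmetry hypothesis $\partial_\mu\partial_x h(x,\mu)(x')=[\partial_\mu\partial_x h(x',\mu)(x)]^\top$, Proposition \ref{prop:green_formula} gives $\int_{\partial S_\delta}\Gamma=0$ for every $\delta\in(0,1)$. In the notation preceding Lemma \ref{lemma:V_W_deriv}, this reads
\begin{equation*}
\int_0^1 \Gamma_{\mu_r}(v_r)\,dr = \int_0^1 \Gamma_{\mu_r^{\delta}}(v_r^{\delta})\,dr + \int_{\delta}^1 \Gamma_{\mu_1^s}(w_1^s)\,ds - \int_{\delta}^1 \Gamma_{\mu_0^s}(w_0^s)\,ds .
\end{equation*}
The two $s$-integrals involve only the endpoints $\mu_0,\mu_1$ through the radial curves $s\mapsto \mu_i\circ D_s^{-1}$. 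They are therefore the same for $\bmu$ and $\bmu'$, because $\mu_0=\mu_0'$ and $\mu_1=\mu_1'$.

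The key step is to show that $\int_0^1 \Gamma_{\mu_r^\delta}(v_r^\delta)\,dr\to 0$ as $\delta\downarrow 0$. By the change of variables already used in Lemma \ref{lemma:V_W_deriv},
\begin{equation*}
\Gamma_{\mu_r^\delta}(v_r^\delta)=\delta\int_{\RR^p}\partial_x h(\delta x,\mu_r\circ D_\delta^{-1})^\top v_r(x)\,\mu_r(dx).
\end{equation*}
The growth bound in item (iv) of the definition of $\cC^{2,2}$ gives $|\partial_x h(\delta x,\mu_r^\delta)|\le C(1+|x|+\cW_2(\mu_r,\delta_0))$ uniformly in $\delta\le 1$. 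Applying Cauchy--Schwarz, the integral is bounded by
\begin{equation*}
C\delta\Big(1+\sup_r \cW_2(\mu_r,\delta_0)\Big)\int_0^1\|v_r\|_{L^2(\mu_r)}\,dr .
\end{equation*}
This bound is finite because $\int_0^1|\mu'|^2(r)\,dr<\infty$ and $r\mapsto\mu_r$ is continuous with limits at the endpoints, so the second moments are bounded on $(0,1)$. Hence the term is $O(\delta)$.

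It remains to let $\delta\downarrow 0$ in the other two terms. We have $\Gamma_{\mu_i^s}(w_i^s)=\int \partial_x h(sx,\mu_i\circ D_s^{-1})^\top x\,\mu_i(dx)$, which is bounded uniformly in $s$ by the same growth estimate. So $\int_0^1\Gamma_{\mu_i^s}(w_i^s)\,ds$ is finite, and dominated convergence handles the limit. Passing to the limit gives
\begin{equation*}
\int_0^1\Gamma_{\mu_r}(v_r)\,dr=\int_0^1\Gamma_{\mu_1^s}(w_1^s)\,ds-\int_0^1\Gamma_{\mu_0^s}(w_0^s)\,ds .
\end{equation*}
The right-hand side depends only on $(\mu_0,\mu_1)$. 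Repeating the argument for $\bmu'$ yields the claim.

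I expect the main obstacle to be bookkeeping at the endpoints rather than anything conceptual. Proposition \ref{prop:green_formula} is stated with $S_\delta$ defined on $[0,1]\times[\delta,1]$, using $\mu_0$ and $\mu_1$ as the endpoint limits of the curve. One must check that the boundary terms at $r=0,1$ indeed use these limits, and that the $r=1$, $s=1$ edge reproduces $\int_0^1\Gamma_{\mu_r}(v_r)\,dr$ (since $v^1_r=v_r$). If uniform-in-$r$ moment bounds are delicate near the open ends, I would first restrict to $[\epsilon,1-\epsilon]$ and then use continuity of $\bmu$ in $\cW_2$ at the endpoints.
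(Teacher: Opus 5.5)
Your proof is correct. It uses the same ingredients as the paper: the Green formula on the dilation surface $S_\delta$, the vanishing of $d\Gamma$ under the symmetry hypothesis, and the $O(\delta)$ bound on the bottom edge. Where you differ is in how the endpoints are handled.

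\textbf{The paper's route.} It first proves $\int_0^1\Gamma_{\mu_r}(v_r)\,dr=0$ for closed curves ($\mu_0=\mu_1$). In that case the two radial terms $\int_\delta^1\Gamma_{\mu_1^s}(w_1^s)\,ds$ and $\int_\delta^1\Gamma_{\mu_0^s}(w_0^s)\,ds$ cancel identically for each fixed $\delta$, so no limit in them is ever taken. It then concatenates $\bmu$ with the time-reversal of $\bmu'$ (velocity $-2v'_{2(1-r)}$) to form a loop.

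\textbf{Your route.} You apply the Green formula to each curve separately. This expresses the line integral as a difference of radial terms at the endpoints plus a vanishing term. You therefore never need the reversed, concatenated curve, nor the (unverified in the paper) claim that it lies in $AC(0,1;\cP_2(\RR^p))$ with the stated velocity.

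For curve-independence alone you do not even need to send $\delta\downarrow0$ in the $s$-integrals. For each fixed $\delta$ they already coincide for $\bmu$ and $\bmu'$, so the two line integrals differ by $O(\delta)$.

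Your extra limiting step, justified by your uniform bound on $\Gamma_{\mu_i^s}(w_i^s)$, buys the explicit endpoint formula
\[
\int_0^1\Gamma_{\mu_r}(v_r)\,dr=\int_0^1\Gamma_{\mu_1^s}(w_1^s)\,ds-\int_0^1\Gamma_{\mu_0^s}(w_0^s)\,ds .
\]
This is exactly the radial representation needed in the proof of Theorem \ref{thm:construction_F_G}. There the paper builds another auxiliary concatenated curve $\tilde\nu$ and invokes this proposition; with your formula that step is immediate.

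The paper's version, in turn, is the classical ``closed implies exact'' argument and is slightly more economical at the level of the boundary terms.
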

\begin{proof}
We first show that for any $\bmu=(\mu_r)_{r\in (0,1)}\in AC(0,1;\cP_2(\RR^p))$ with velocity $v$ such that $\mu_0=\mu_1$, $\int_0^1 \Gamma_{\mu_r}(v_r)dr=0$. Since $\partial_{\mu}\partial_x h(x,\mu)(x')=[\partial_{\mu}\partial_x h(x',\mu)(x)]^{\top}$ for all $(x,x',\mu)\in\RR^p\times\RR^p\times\cP_2(\RR^p)$, $d\Gamma_{\mu}=0$ for each $\mu\in\cP_2(\RR^p)$. Moreover, $\mu_0=\mu_1$ implies that $\int_{\delta}^1 \Gamma_{\mu_1^s}(w_1^s)ds=\int_{\delta}^1 \Gamma_{\mu_0^s}(w_0^s)ds$. Fix $\delta\in (0,1)$. By Proposition \ref{prop:green_formula},
$\int_{\partial S_{\delta}}\Gamma=\int_{S_{\delta}} d\Gamma$, which implies that
$\int_0^1 \Gamma_{\mu_r}(v_r)dr=\int_0^1 \Gamma_{\mu_r^1}(v_r^1)dr = \int_0^1 \Gamma_{\mu_r^{\delta}}(v_r^{\delta})dr$. Note that for a.e.~$r\in (0,1)$, 
\begin{equation*}
\Vert v_r^{\delta}\Vert_{L^2(\RR^p,\mu_r^{\delta};\RR^p)}= \left( \int_{\RR^p}  \delta^2 \left \vert v_r\left(\frac{x}{\delta}\right)\right\vert^2 \mu_r^{\delta}(dx)\right)^{\frac{1}{2}}=\delta \Vert v_r\Vert_{L^2(\RR^p,\mu_r;\RR^p)}=\delta \vert \mu'\vert(r).
\end{equation*}
By \eqref{eq:differential_forms}, the linear growth of $\partial_x h$ and the absolute continuity of $\bmu$, there exists $C_{\bmu}>0$ depending only on $\bmu$ such that
for all $\delta>0$,
\begin{align*}
\left\vert \int_0^1 \Gamma_{\mu_r}(v_r)dr \right\vert=\left\vert \int_0^1 \Gamma_{\mu_r^{\delta}}(v_r^{\delta})dr \right\vert \leq \left(\int_0^1 \Vert \partial_x h(\cdot,\mu_r^{\delta})\Vert_{L^2(\RR^p,\mu_r^{\delta};\RR^p)}^2 dr\right)^{\frac{1}{2}} \left(\int_0^1 \Vert v_r^{\delta}\Vert_{L^2(\RR^p,\mu_r^{\delta};\RR^p)}^2 dr\right)^{\frac{1}{2}} \leq C_{\bmu}\delta.
\end{align*}
Sending  $\delta$ to zero implies that  $\int_0^1 \Gamma_{\mu_r}(v_r)dr=0$.

Given $\bmu,\bmu'\in AC(0,1;\cP_2(\RR^p))$ with velocities $v$ and $v'$, respectively, and satisfying $\mu_0=\mu_0'$ and $\mu_1=\mu_1'$, define $\Tilde \bmu=(\Tilde \mu_r)_{r\in (0,1)}\subset \cP_2(\RR^p)$ and $\Tilde v:(0,1)\times \RR^p\to\RR^p$ by:
\begin{equation*}
\Tilde \mu_r \coloneqq \begin{cases} \mu_{2r}, \quad & r\in (0,\frac{1}{2}] \\ \mu_{2(1-r)}',\quad & r\in (\frac{1}{2},1) \end{cases}, \quad \Tilde v_r(x) \coloneqq \begin{cases} 2v_{2r}(x), \quad & (r,x)\in (0,\frac{1}{2})\times\RR^p \\ -2v_{2(1-r)}'(x), \quad & (r,x)\in (\frac{1}{2},1)\times\RR^p\end{cases},
\end{equation*}
Then $\Tilde\bmu\in AC(0,1;\cP_2(\RR^p))$ with velocity $\Tilde v$ satisfying $\Tilde\mu_0=\Tilde\mu_1$. Therefore,
\begin{equation*}
0=\int_0^1 \Gamma_{\Tilde\mu_r}(\Tilde v_r)dr = \int_0^{\frac{1}{2}} \Gamma_{\Tilde\mu_r}(\Tilde v_r)dr + \int_{\frac{1}{2}}^1 \Gamma_{\Tilde\mu_r}(\Tilde v_r)dr = \int_0^1 \Gamma_{\mu_r}(v_r) dr - \int_0^1 \Gamma_{\mu_r'}(v_r')dr,
\end{equation*}
which finishes the proof. 
\end{proof}

\begin{proof}[Proof of Theorem \ref{thm:construction_F_G}]
We only present the proof for $F$, since the proof for $G$ is analogous. To simply the notation, we write $z=(x,a)\in\RR^n\times\RR^k$. Suppose that $F$ satisfies \eqref{eq:carmona_condition}. For given $(t,\nu)\in [0,T]\times\cP_2(\RR^{n+k})$, let $Z$ be a random variable on an atomless probability space $(\Omega,\cF,\PP)$ with distribution $\nu$, then by
\eqref{eq:carmona_condition},   for a.e.~$t\in [0,T]$, 
\begin{align*}
F(t,\nu)
&=F(t,\delta_0)+\EE\left[\int_0^1 \partial_{\nu}F(t,\cL(rZ))(rZ)^{\top}Z dr \right]
\\
&=F(t,\delta_0)+\int_0^1 \int_{\RR^{n+k}} \partial_{\nu}F(t,\nu\circ T_r^{-1})(rz)^{\top}\nu(dz)dr
\\
&=F(t,\delta_0)+\int_0^1\int_{\RR^{n+k}} \partial_z f(t,rz,\nu\circ T_r^{-1})^{\top}z\nu(dz)dr 
\\
&
= \Tilde F(t) + \int_0^1\int_{\RR^{n+k}} \partial_z f(t,z,\nu_r)^{\top}w_r(z)\nu_r(dz)dr,
\end{align*}
where $\Tilde{F}(t)\coloneqq F(t,\delta_0)$ for all $t\in [0,T]$, and $\nu_r\coloneqq \nu\circ T_r^{-1}$ and  $w_r(z) = \frac{z}{r}$ for all $(r,z)\in (0,1)\times\RR^{n+k}$. Since $(\nu_r)_{r\in (0,1)}\in AC(0,1;\cP_2(\RR^{n+k}))$ with velocity $w$, connecting $\delta_0$ with $\nu$, we conclude that $F$ satisfies \eqref{eq:mean_field_potential}.

Suppose that $F$ satisfies \eqref{eq:mean_field_potential}. Define $(\Tilde\nu_r)_{r\in (0,1)}\subset \cP_2(\RR^{n+k})$ and $\Tilde w:(0,1)\times \RR^{n+k}\to\RR^{n+k}$ by
\begin{equation*}
\Tilde \nu_r \coloneqq \begin{cases} \nu_0\circ T_{2r}^{-1}, \quad & r\in (0,\frac{1}{2}] \\ \nu_{2r-1},\quad & r\in (\frac{1}{2},1) \end{cases}, \quad \Tilde w_r(z) \coloneqq \begin{cases} \frac{z}{r}, \quad & (r,z)\in (0,\frac{1}{2})\times\RR^{n+k} \\ 2w_{2r-1}(z), \quad & (r,z)\in (\frac{1}{2},1)\times\RR^{n+k}\end{cases},
\end{equation*}
then $(\Tilde\nu_r)_{r\in(0,1)}\in AC(0,1;\cP_2(\RR^p))$ with velocity $\Tilde w$ satisfying $\Tilde\nu_0=\delta_0$, $\Tilde\nu_1=\nu$, and $\Tilde \nu_{\frac{1}{2}}=\nu_0$. Moreover, \eqref{eq:mean_field_potential} implies that
\begin{align*}
F(t,\nu)  =& \int_0^1 \int_{\RR^{n+k}}\partial_z f(t,z,\nu_r)^{\top}w_r(z)\nu_r(dz)dr +\tilde F(t) = \int_{\frac{1}{2}}^1 \int_{\RR^{n+k}}\partial_z f(t,z,\Tilde\nu_r)^{\top}\Tilde w_r(z)\Tilde\nu_r(dz)dr +\tilde F(t) \\
=& \int_0^1 \int_{\RR^{n+k}}\partial_z f(t,z,\Tilde\nu_r)^{\top}\Tilde w_r(z)\Tilde\nu_r(dz)dr +\tilde F(t) - \int_0^1 \int_{\RR^{n+k}}\partial_z f(t,z,\nu_0\circ T_r^{-1})^{\top} z \nu_0(dz)dr \\
=& \int_0^1 \int_{\RR^{n+k}}\partial_z f(t,z,\nu\circ T_r^{-1})^{\top}z \nu(dz)dr +\tilde F(t) - \int_0^1 \int_{\RR^{n+k}}\partial_z f(t,z,\nu_0\circ T_r^{-1})^{\top} z \nu_0(dz)dr,
\end{align*}
where the last equality is due to Proposition \ref{prop:closed_exact}. The proof of Theorem \ref{thm:alpha0_equiv} implies that   $F$ satisfies \eqref{eq:carmona_condition}.
\end{proof}

\subsection{Proofs of Lemma \ref{lemma:mfg_mfc} and  Theorem \ref{thm:MFG_potential}}

\begin{proof}[Proof of Lemma \ref{lemma:mfg_mfc}]
To prove Item \ref{item:embedding}, we  first verify that $\iota(\Tilde{\PP})$ satisfies Definition \ref{def:measure_control2}. The definition of $\iota$ implies that $\iota(\Tilde{\PP})(\Tilde{\mu}_0'=\nu)=\Tilde{\PP}(\Tilde{\mu}_0=\nu)=1$, which gives Item \ref{item:initial_state'} of Definition \ref{def:measure_control2}. To verify that $\Tilde{B}$ is an $(\iota(\Tilde{\PP}),\Tilde{\FF}')$-Wiener process, note that $\iota(\Tilde{\PP})\circ \Tilde{B}^{-1}=\Tilde{\PP}\circ\Tilde{B}^{-1} $, it remains to show that $\Tilde{B}$ has independent increment. For all $0<s<t<T$, $\phi\in C_b(\Tilde{\Omega}')$, and $\psi\in C_b(\RR^l)$,
\begin{align*}
& \EE^{\iota(\Tilde{\PP})}\left[ \psi\left(\Tilde{B}_t-\Tilde{B}_s\right)\phi\left(\Tilde{\mu}_{s\wedge\cdot},\Tilde{\mu}_{s\wedge\cdot}', \Tilde{\Pi}^s, \Tilde{B}_{s\wedge\cdot} \right) \right]\\
=& \EE^{\Tilde{\PP}}\left[ \psi\left(\Tilde{B}_t-\Tilde{B}_s\right)\phi\left(\Tilde{\mu}_{s\wedge\cdot},\Tilde{\mu}_{s\wedge\cdot}, \left(\Tilde{\Lambda}_r(dm)\delta_m(dm')dr \right)^s, \Tilde{B}_{s\wedge\cdot} \right) \right] \\
=& \EE^{\Tilde{\PP}}\left[ \psi\left(\Tilde{B}_t-\Tilde{B}_s\right) \right] \EE^{\Tilde{\PP}}\left[\phi\left(\Tilde{\mu}_{s\wedge\cdot},\Tilde{\mu}_{s\wedge\cdot}, \left(\Tilde{\Lambda}_r(dm)\delta_m(dm')dr \right)^s, \Tilde{B}_{s\wedge\cdot} \right) \right] \\
=& \EE^{\iota(\Tilde{\PP})}\left[ \psi\left(\Tilde{B}_t-\Tilde{B}_s\right)\right]\EE^{\iota(\Tilde{\PP})}\left[\phi\left(\Tilde{\mu}_{s\wedge\cdot},\Tilde{\mu}_{s\wedge\cdot}', \Tilde{\Pi}^s, \Tilde{B}_{s\wedge\cdot} \right) \right],
\end{align*}
where the second equality follows from $\Tilde B$ being a $(\Tilde{\mathbb{P}},\Tilde{\mathbb{F}})$-Wiener process (cf.~Definition \ref{def:measure_control}). Moreover, since 
for all $\varphi\in C_b([0,T]\times\cP(\RR^n\times A))$,
$$\int_0^T \int_{\cP(\RR^n\times A)^2} \varphi(t,m')\Tilde{\Lambda}_t(dm)\delta_m(dm')dt =\int_0^T \int_{\cP(\RR^n\times A)} \varphi(t,m)\Tilde{\Lambda}_t(dm)dt,$$
it follows that $\iota(\Tilde{\PP})(\Tilde{\Lambda}'=\Tilde{\Lambda})=1$, and 
\begin{equation*}
\iota(\Tilde{\PP})\left(N_t(f,\Tilde{\mu}',\Tilde{\Lambda}')=0,\forall f\in C_b^2(\RR^n),\forall t\in [0,T]\right)= \Tilde{\PP}\left(N_t(f,\Tilde{\mu},\Tilde{\Lambda})=0,\forall f\in C_b^2(\RR^n),\forall t\in [0,T]\right)=1,
\end{equation*}
which verifies Item \ref{item:state_martingale'} of Definition \ref{def:measure_control2}. 
Item \ref{item:state_marginal'} of Definition \ref{def:measure_control2} follows from 
$$\EE^{\iota(\Tilde{\PP})}\left[ \int_0^T 1_{\left\{\Tilde{\Lambda}_t'(\mathbb{Z}_{\Tilde{\mu}_t'})=1\right\}} dt \right] = \EE^{\Tilde{\PP}}\left[ \int_0^T 1_{\left\{\Tilde{\Lambda}_t(\mathbb{Z}_{\Tilde{\mu}_t})=1\right\}} dt \right] = 1.
$$
The identity that 
$\iota(\Tilde{\PP})\circ(\Tilde{\mu},\Tilde{\Lambda},\Tilde{B})^{-1} = \Tilde{\PP} $ follows directly from the definition of $\iota$, and 
the fact that  $\iota(\Tilde{\PP})$ satisfies the consistency condition in Definition \ref{def:MFG_solution}  follows from $\iota(\Tilde{\PP})(\Tilde{\mu}=\Tilde{\mu}')=1$, and
\begin{equation*}
\EE^{\iota(\Tilde{\PP})}\left[\int_0^T 1_{\Tilde{\Pi}_t(\{m=m'\})=1} dt \right] = \EE^{\Tilde{\PP}}\left[\int_0^T 1_{\int_{\cP(\RR^n\times A)} \delta_m(\{m\})\Tilde{\Lambda}_t(dm)  =1} dt \right] = 1.
\end{equation*}

We proceed to prove Item \ref{item:perturbation}.  It suffices to show  that $\Tilde{\mathbb{P}}^{\varepsilon}\coloneqq \kappa_{\varepsilon}(\Tilde{\PP})$, $\varepsilon\in [0,1]$, satisfies Definition \ref{def:measure_control}. First, by $\Tilde{\mathbb{P}}(\Tilde{\mu}_0'=\nu)=1$ and $\Tilde{\mathbb{P}}(\Tilde{\mu}_0=\nu)=1$,  
$ 
\Tilde{\mathbb{P}}^{\varepsilon}(\Tilde{\mu}_0=\nu)=\Tilde{\mathbb{P}}( \varepsilon\Tilde{\mu}_0'+(1-\varepsilon)\Tilde{\mu}_0=\nu)=1,
$ 
which implies Item \ref{item:initial_state} of Definition \ref{def:measure_control}. For $\Tilde{\mathbb{P}}$-almost every $\omega\in\Tilde{\Omega}'$, for each $f\in C_b^2(\mathbb{R}^n)$ and $t\in [0,T]$,
\begin{equation*}
\begin{aligned}
 &N_t(f,\Tilde{\mu}^{\varepsilon},\Tilde{\Lambda}^{\varepsilon})= \langle f(\cdot-\gamma \Tilde{B}_t),\varepsilon\Tilde{\mu}_t'+(1-\varepsilon)\Tilde{\mu}_t\rangle - \langle f,\varepsilon\Tilde{\mu}_0'+(1-\varepsilon)\Tilde{\mu}_0\rangle \\
 &\quad - \int_0^t\int_{\cP(\RR^n\times A)\times\cP(\RR^n\times A)}\int_{\mathbb{R}^n\times A} \mathcal{L}_r[f(\cdot-\gamma \Tilde{B}_r)](x,a)(\varepsilon m'+(1-\varepsilon)m)(dx,da)\Tilde{\Pi}_r(dm,dm')dr \\
=& \varepsilon \left( \langle f(\cdot-\gamma \Tilde{B}_t),\Tilde{\mu}_t' \rangle - \langle f,\Tilde{\mu}_0'\rangle - \int_0^t\int_{\cP(\RR^n\times A)}\int_{\mathbb{R}^n\times A} \mathcal{L}_r[f(\cdot-\gamma \Tilde{B}_r)](x,a)m'(dx,da)\Tilde{\Lambda}'_r(dm')dr \right) \\
&+ (1-\varepsilon) \left( \langle f(\cdot-\gamma \Tilde{B}_t),\Tilde{\mu}_t \rangle - \langle f,\Tilde{\mu}_0\rangle - \int_0^t\int_{\cP(\RR^n\times A)}\int_{\mathbb{R}^n\times A} \mathcal{L}_r[f(\cdot-\gamma \Tilde{B}_r)](x,a)m(dx,da)\Tilde{\Lambda}_r(dm)dr \right)\\
=& \varepsilon N_t(f,\Tilde{\mu}',\Tilde{\Lambda}') + (1-\varepsilon)N_t(f,\Tilde{\mu},\Tilde{\Lambda}) = 0.
\end{aligned}
\end{equation*}
This along with  $\Tilde{\mathbb{P}}^{\varepsilon}\circ\Tilde{B}^{-1}=\Tilde{\mathbb{P}}\circ\Tilde{B}^{-1}$ verifies  Item \ref{item:state_martingale} of Definition \ref{def:measure_control}. To verify   Item \ref{item:state_marginal} of Definition \ref{def:measure_control}, for $d\Tilde{\PP}\otimes dt$ almost every $(t,\omega)\in[0,T]\times\Tilde{\Omega}'$,
\begin{equation*}
\begin{aligned}
\Tilde{\Lambda}_t^{\varepsilon}(\mathbb{Z}_{\Tilde{\mu}_t^{\varepsilon}}) =& \Tilde{\Pi}_t\left(\{ (\varepsilon m'+ (1-\varepsilon)m)(dx,A)= \varepsilon \Tilde{\mu}_t'(dx)+ (1-\varepsilon)\Tilde{\mu}_t(dx) \} \right) \\
\geq & \Tilde{\Pi}_t\left(\{ m(dx,A)= \Tilde{\mu}_t (dx)\} \cap  \{ m'(dx,A)= \Tilde{\mu}_t'(dx) \} \right) \\
\geq &\Tilde{\Pi}_t\left(\{ m(dx,A)= \Tilde{\mu}_t (dx)\}  \right)+\Tilde{\Pi}_t\left( \{ m'(dx,A)= \Tilde{\mu}_t'(dx) \} \right)-1 \\
= & \Tilde{\Lambda}_t\left(\{ m(dx,A)= \Tilde{\mu}_t (dx)\}  \right)+\Tilde{\Lambda}'_t\left( \{ m'(dx,A)= \Tilde{\mu}_t'(dx) \} \right)-1=1,
\end{aligned}
\end{equation*}
where the last line follows from 
$\Tilde{\Lambda}_t'(\mathbb{Z}_{\Tilde{\mu}_t'})=1$ due to  $\Tilde{\PP}\in\cP_V'(\nu)$, 
and $\Tilde{\Lambda}_t(\mathbb{Z}_{\Tilde{\mu}_t})=1$ due to $\Tilde{\PP}\circ (\Tilde{\mu},\Tilde{\Lambda},\Tilde{B}  )^{-1}\in\cP_V(\nu)$.
This finishes the proof.
\end{proof}

\begin{proof}[Proof of Theorem \ref{thm:MFG_potential}]
Define  $F:[0,T]\times\cP_2(\RR^n\times\RR^k)$ and $G:\cP_2(\RR^n)$ be such that 
for all $(t,\nu,\mu)\in [0,T]\times\cP_2(\RR^n\times\RR^k)\times\cP_2(\RR^n)$,
\begin{equation*}
F(t,\nu)\coloneqq \int_{\RR^n\times\RR^k} F^{\infty} (t,x,a,\nu)\nu(dx,da),\quad G(\mu)\coloneqq \int_{\RR^n} G^{\infty} (x,\mu)\mu(dx).\end{equation*}
By \eqref{eq:mf_potential_measure}, for all $\Tilde\PP\in \cP_V(\nu)$ and $\Tilde\PP'\in \cP_V'(\nu)$ such that $\Tilde\PP'\circ(\Tilde\mu,\Tilde\Lambda,\Tilde B)^{-1}=\Tilde\PP$,
\begin{align*}
\Tilde\Phi(\Tilde\PP)=& \EE^{\Tilde\PP}\left[\int_0^T\int_{\cP(\RR^n\times A)} F(t,m)\Tilde\Lambda_t(dm)dt + G(\Tilde\mu_T) \right] \\
=& \EE^{\Tilde\PP'}\left[ \int_0^T\int_{\cP(\RR^n\times A)^2} F(t,m)\Tilde\Pi_t(dm,dm')dt + G(\Tilde\mu_T)\right].
\end{align*}
This along with 
  the definition of $(\kappa_\varepsilon)_{\varepsilon\in (0,1)}$, 
\begin{equation}\label{eq:potential_deriv}
\begin{aligned}
& \lim_{\varepsilon\downarrow 0} \frac{\Tilde{\Phi}(\kappa_{\varepsilon}(\Tilde\PP'))-\Tilde{\Phi}(\Tilde{\mathbb{P}})}{\varepsilon} \\
=& \lim_{\varepsilon\downarrow 0} \frac{1}{\varepsilon}\mathbb{E}^{\Tilde{\mathbb{P}}'}\left[ \int_0^T \int_{\cP(\RR^n\times A)^2}F(t,\varepsilon m'+(1-\varepsilon)m) \Tilde{\Pi}_t(dm,dm')dt +  G(\varepsilon\Tilde{\mu}_T'+(1-\varepsilon)\Tilde\mu_T)\right.\\
&\left. -\int_0^T \int_{\cP(\RR^n\times A)^2} F(t,m)\Tilde{\Pi}_t(dm,dm')dt - G(\Tilde{\mu}_T)\right] \\
=&\mathbb{E}^{\Tilde{\mathbb{P}}'}\left[\int_0^T \int_{\cP(\RR^n\times A)^2} \lim_{\varepsilon\downarrow 0} \frac{ F(t,\varepsilon m'+(1-\varepsilon)m) -  F(t,m)  }{\varepsilon} \Tilde{\Pi}_t(dm,dm')dt \right.\\
&\left.+ \lim_{\varepsilon\downarrow 0} \frac{ G(\varepsilon\Tilde{\mu}_T'+(1-\varepsilon)\Tilde{\mu}_T) -  G(\Tilde{\mu}_T) }{\varepsilon}  \right],
\end{aligned}
\end{equation}
where the last inequality is by the Leibniz integral rule  (see, e.g., \cite[Theorem A.5.1]{durrett2019probability})
and the regularity of the  functions
$f$ and $g$. 
By Theorem \ref{thm:construction_F_G},    $F$ and $G$ satisfy \eqref{eq:carmona_condition}, and hence  by \cite[Proposition 5.51]{carmona2018probabilistic1}, for all $(t,x,a,\nu,\mu)\in [0,T]\times\RR^n\times\RR^k\times\cP_2(\RR^n\times\RR^k)\times\cP_2(\RR^n)$,
\begin{equation*}
\frac{\delta F}{\delta \nu}(t,\nu)(x,a)=f(t,x,a,\nu) - \int_{\RR^n\times\RR^k}f(t,x',a',\nu)\nu(dx',da'),\quad \frac{\delta G}{\delta \mu}(\mu)(x)=g(x,\mu) - \int_{\RR^n} g(x',\mu)\mu(dx').  
\end{equation*}
Hence it holds that 
\begin{equation*}
\begin{aligned}
  \lim_{\varepsilon\downarrow 0} \frac{\Tilde{\Phi}(\kappa_{\varepsilon}(\Tilde\PP'))-\Tilde{\Phi}(\Tilde{\mathbb{P}})}{\varepsilon} 
=& \mathbb{E}^{\Tilde{\mathbb{P}}'}\left[\int_0^T \int_{\cP(\RR^n\times A)^2}  \int_{\mathbb{R}^n\times A} \frac{\delta F}{\delta \nu} (t,m)(x,a) (m'-m)(dx,da) \Tilde{\Pi}_t(dm,dm')dt \right.\\
&\left. + \int_{\mathbb{R}^n} \frac{\delta G}{\delta \mu} (\Tilde{\mu}_T)(x) (\Tilde{\mu}_T'-\Tilde{\mu}_T)(dx) \right] \\
=&  \mathbb{E}^{\Tilde{\mathbb{P}}'}\left[\int_0^T \int_{\cP(\RR^n\times A)^2}  \left\langle f(t,\cdot,\cdot,m), m'\right\rangle \Tilde{\Pi}_t(dm,dm')dt +\left\langle g(\cdot, \Tilde{\mu}_T), \Tilde{\mu}_T'\right\rangle \right] \\
&- \mathbb{E}^{\iota(\Tilde\PP)}\left[\int_0^T \int_{\cP(\RR^n\times A)^2}  \left\langle f(t,\cdot,\cdot,m), m'\right\rangle \Tilde{\Pi}_t(dm,dm')dt +\left\langle g(\cdot, \Tilde{\mu}_T), \Tilde{\mu}_T'\right\rangle \right] \\
=& \Tilde{J}(\Tilde{\mathbb{P}}') - \Tilde{J}(\iota(\Tilde\PP)),
\end{aligned}
\end{equation*}
where we have used $\Tilde{\PP}'\circ(\Tilde{\mu},\Tilde{\Lambda},\Tilde{B})^{-1} = \Tilde{\PP} $ and 
$\iota(\Tilde{\PP})\circ(\Tilde{\mu},\Tilde{\Lambda},\Tilde{B})^{-1} = \Tilde{\PP} $. This proves \eqref{eq:mf_potential}.

Finally, let  
$\Tilde{\PP}^*$ be a   minimizer  of $\Tilde{\Phi}$. 
Given 
$\Tilde{\mathbb{P}}\in\cP_V'(\nu)$ such that $\Tilde{\mathbb{P}}\circ (\Tilde{\mu},\Tilde{\Lambda},B)^{-1}
=\iota(\Tilde{\PP}^*)\circ (\Tilde{\mu},\Tilde{\Lambda},B)^{-1}=\Tilde\PP^*$, and hence 
by \eqref{eq:mf_potential}, 
$\Tilde{J}(\Tilde{\PP}')-\Tilde{J}(\iota(\Tilde{\PP}^*)) =\lim_{\varepsilon\downarrow 0} \frac{\Tilde{\Phi}(\kappa_{\varepsilon}(\Tilde{\PP}')) - \Tilde{\Phi}(\Tilde{\PP}^*)}{\varepsilon}\ge 0.
$ 
This along with the fact that 
$\iota(\Tilde{\PP}^*)$ satisfies Condition \ref{item:consistency} in Definition \ref{def:MFG_solution} shows that $\iota(\Tilde{\PP}^*)$ is an MFE for the MFG. 
\end{proof}

\bibliographystyle{abbrv}
\bibliography{bibfile}

\begin{thebibliography}{10}

\bibitem{aurell2018mean}
A.~Aurell and B.~Djehiche.
\newblock Mean-field type modeling of nonlocal crowd aversion in pedestrian
  crowd dynamics.
\newblock {\em SIAM Journal on Control and Optimization}, 56(1):434--455, 2018.

\bibitem{bertsekas2007stochastic}
D.~P. Bertsekas and S.~E. Shreve.
\newblock {\em Stochastic Optimal Control: The Discrete-Time Case}.
\newblock Athena Scientific, 2007.

\bibitem{cardaliaguet2019master}
P.~Cardaliaguet, F.~Delarue, J.-M. Lasry, and P.-L. Lions.
\newblock {\em The Master Equation and the Convergence Problem in Mean Field
  Games: (AMS-201)}, volume~2.
\newblock Princeton University Press, 2019.

\bibitem{cardaliaguet2017learning}
P.~Cardaliaguet and S.~Hadikhanloo.
\newblock Learning in mean field games: The fictitious play.
\newblock {\em ESAIM: COCV}, 23(2):569--591, 2017.

\bibitem{cardaliaguet2018mean}
P.~Cardaliaguet and C.-A. Lehalle.
\newblock Mean field game of controls and an application to trade crowding.
\newblock {\em Mathematics and Financial Economics}, 12(3):335--363, 2018.

\bibitem{carmona2023synchronization}
R.~Carmona, Q.~Cormier, and H.~M. Soner.
\newblock Synchronization in a {K}uramoto mean field game.
\newblock {\em Communications in Partial Differential Equations},
  48(9):1214--1244, 2023.

\bibitem{carmona2018probabilistic1}
R.~Carmona and F.~Delarue.
\newblock {\em Probabilistic Theory of Mean Field Games with Applications I}.
\newblock Springer Cham, 2018.

\bibitem{carmona2018probabilistic2}
R.~Carmona and F.~Delarue.
\newblock {\em Probabilistic Theory of Mean Field Games with Applications II}.
\newblock Springer Cham, 2018.

\bibitem{carmona2015probabilistic}
R.~Carmona and D.~Lacker.
\newblock {A probabilistic weak formulation of mean field games and
  applications}.
\newblock {\em The Annals of Applied Probability}, 25(3):1189 -- 1231, 2015.

\bibitem{cecchin2022selection}
A.~Cecchin and F.~Delarue.
\newblock Selection by vanishing common noise for potential finite state mean
  field games.
\newblock {\em Communications in Partial Differential Equations},
  47(1):89--168, 2022.

\bibitem{cecchin2022weak}
A.~Cecchin and F.~Delarue.
\newblock Weak solutions to the master equation of potential mean field games.
\newblock {\em arXiv preprint arXiv:2204.04315}, 2022.

\bibitem{cecchin2026convergence}
A.~Cecchin and J.~Dianetti.
\newblock Convergence for linear quadratic potential mean field games.
\newblock {\em arXiv preprint arXiv:2602.14842}, 2026.

\bibitem{delarue2020selection}
F.~Delarue and R.~F. Tchuendom.
\newblock Selection of equilibria in a linear quadratic mean-field game.
\newblock {\em Stochastic Processes and their Applications}, 130(2):1000--1040,
  2020.

\bibitem{di2025alpha}
X.~Di, A.~Hu, Z.~Wang, and Y.~Zhang.
\newblock $\alpha$-potential games for decentralized control of connected and
  automated vehicles.
\newblock {\em arXiv preprint arXiv:2512.05712}, 2025.

\bibitem{djete2022extended}
M.~F. Djete.
\newblock {Extended mean field control problem: a propagation of chaos result}.
\newblock {\em Electronic Journal of Probability}, 27:1 -- 53, 2022.

\bibitem{djete2023large}
M.~F. Djete.
\newblock Large population games with interactions through controls and common
  noise: convergence results and equivalence between open-loop and closed-loop
  controls.
\newblock {\em ESAIM: Control, Optimisation and Calculus of Variations}, 29:39,
  2023.

\bibitem{djete2023mean}
M.~F. Djete.
\newblock {Mean field games of controls: On the convergence of Nash
  equilibria}.
\newblock {\em The Annals of Applied Probability}, 33(4):2824 -- 2862, 2023.

\bibitem{djete2025non}
M.~F. Djete.
\newblock A non-exchangeable mean field control problem with controlled
  interactions.
\newblock {\em arXiv preprint arXiv:2511.00288}, 2025.

\bibitem{djete2026non}
M.~F. Djete.
\newblock Non--exchangeable mean field games with moderate interactions and
  common noise.
\newblock {\em arXiv preprint arXiv:2605.14901}, 2026.

\bibitem{djete2022mckean2}
M.~F. Djete, D.~Possama\"{\i}, and X.~Tan.
\newblock {McKean–Vlasov} optimal control: Limit theory and equivalence
  between different formulations.
\newblock {\em Mathematics of Operations Research}, 47(4):2891--2930, 2022.

\bibitem{djete2026approximate}
M.~F. Djete and N.~Touzi.
\newblock On approximate {N}ash equilibria in mean field games.
\newblock {\em arXiv preprint arXiv:2601.20910}, 2026.

\bibitem{dudley2002real}
R.~M. Dudley.
\newblock {\em Real Analysis and Probability}.
\newblock Cambridge Studies in Advanced Mathematics. Cambridge University
  Press, 2 edition, 2002.

\bibitem{durrett2019probability}
R.~Durrett.
\newblock {\em Probability: Theory and Examples}.
\newblock Cambridge Series in Statistical and Probabilistic Mathematics.
  Cambridge University Press, 5 edition, 2019.

\bibitem{flandoli2022n}
F.~Flandoli, M.~Ghio, and G.~Livieri.
\newblock \( {N}\)-player games and mean field games of moderate interactions.
\newblock {\em Applied Mathematics \& Optimization}, 85(3):38, 2022.

\bibitem{gangbo2011differential}
W.~Gangbo, H.~Kim, and T.~Pacini.
\newblock {\em Differential forms on {Wasserstein} space and
  infinite-dimensional Hamiltonian systems}, volume 211.
\newblock American Mathematical Society, 2011.

\bibitem{graber2025remarks}
P.~J. Graber.
\newblock Remarks on potential mean field games.
\newblock {\em Research in the Mathematical Sciences}, 12(1):13, 2025.

\bibitem{guo2025alpha}
X.~Guo, X.~Li, and Y.~Zhang.
\newblock An \({\alpha }\)-potential game framework for \( {N}\)-player dynamic
  games.
\newblock {\em SIAM Journal on Control and Optimization}, 63(4):2964--3005,
  2025.

\bibitem{guo2025distributed}
X.~Guo, X.~Li, and Y.~Zhang.
\newblock Distributed games with jumps: An $\alpha$-potential game approach.
\newblock {\em arXiv preprint arXiv:2508.01929}, 2025.

\bibitem{guo2025towards}
X.~Guo and Y.~Zhang.
\newblock Towards an analytical framework for dynamic potential games.
\newblock {\em SIAM Journal on Control and Optimization}, 63(2):1213--1242,
  2025.

\bibitem{huang2006large}
M.~Huang, R.~P. Malham{\'e}, and P.~E. Caines.
\newblock {Large population stochastic dynamic games: closed-loop
  {McKean-Vlasov} systems and the {Nash} certainty equivalence principle}.
\newblock {\em Communications in Information \& Systems}, 6(3):221 -- 252,
  2006.

\bibitem{hofer2026optimal}
F.~Höfer and H.~M. Soner.
\newblock Optimal control and potential games in the mean field.
\newblock {\em Stochastic Processes and their Applications}, 199:104971, 2026.

\bibitem{jordan2026independent}
P.~Jordan and M.~Kamgarpour.
\newblock Independent learning of {N}ash equilibria in partially observable
  {M}arkov potential games with decoupled dynamics.
\newblock {\em arXiv preprint arXiv:2605.06377}, 2026.

\bibitem{kalaria2025alpha}
D.~Kalaria, C.~Maheshwari, and S.~Sastry.
\newblock $\alpha$-racer: Real-time algorithm for game-theoretic motion
  planning and control in autonomous racing using $\alpha$-potential function.
\newblock {\em Proceedings of Machine Learning Research vol}, 283:1--18, 2025.

\bibitem{kallenbert2021foundations}
O.~Kallenberg.
\newblock {\em Foundations of Modern Probability}.
\newblock Springer Cham, 2021.

\bibitem{lachapelle2011mean}
A.~Lachapelle and M.-T. Wolfram.
\newblock On a mean field game approach modeling congestion and aversion in
  pedestrian crowds.
\newblock {\em Transportation research part B: methodological},
  45(10):1572--1589, 2011.

\bibitem{lacker2015mean}
D.~Lacker.
\newblock Mean field games via controlled martingale problems: Existence of
  {M}arkovian equilibria.
\newblock {\em Stochastic Processes and their Applications}, 125(7):2856--2894,
  2015.

\bibitem{lacker2017limit}
D.~Lacker.
\newblock Limit theory for controlled {McKean--Vlasov} dynamics.
\newblock {\em SIAM Journal on Control and Optimization}, 55(3):1641--1672,
  2017.

\bibitem{lasry2007mean}
J.-M. Lasry and P.-L. Lions.
\newblock Mean field games.
\newblock {\em Japanese Journal of Mathematics}, 2(1):229--260, 2007.

\bibitem{lauriere2022convergence}
M.~Lauri{\`e}re and L.~Tangpi.
\newblock Convergence of large population games to mean field games with
  interaction through the controls.
\newblock {\em SIAM Journal on Mathematical Analysis}, 54(3):3535--3574, 2022.

\bibitem{monderer1996potential}
D.~Monderer and L.~S. Shapley.
\newblock Potential games.
\newblock {\em Games and Economic Behavior}, 14(1):124--143, 1996.

\bibitem{neuman2026potential}
E.~Neuman and S.~Tuschmann.
\newblock Potential games on unimodular random graphs.
\newblock {\em arXiv preprint arXiv:2604.13836}, 2026.

\bibitem{plank2026learning}
P.~Plank and Y.~Zhang.
\newblock Learning distributed equilibria in linear-quadratic stochastic
  differential games: An $\alpha $-potential approach.
\newblock {\em arXiv preprint arXiv:2602.16555}, 2026.

\bibitem{possamai2025non}
D.~Possama{\"\i} and L.~Tangpi.
\newblock Non-asymptotic convergence rates for mean-field games: weak
  formulation and {McKean--Vlasov BSDEs}.
\newblock {\em Applied Mathematics \& Optimization}, 91(3):58, 2025.

\bibitem{santambrogio2020lecture}
F.~Santambrogio.
\newblock {\em Lecture Notes on Variational Mean Field Games}, pages 159--201.
\newblock Springer International Publishing, Cham, 2020.

\bibitem{santambrogio2021cucker}
F.~Santambrogio and W.~Shim.
\newblock A {C}ucker--{S}male inspired deterministic mean field game with
  velocity interactions.
\newblock {\em SIAM Journal on Control and Optimization}, 59(6):4155--4187,
  2021.

\bibitem{zhang2017backward}
J.~Zhang.
\newblock {\em Backward Stochastic Differential Equations}.
\newblock Springer New York, NY, 2017.

\end{thebibliography}
\addcontentsline{toc}{section}{Reference}

\appendix
\section{Proofs of Examples \ref{eg:common_noise} and \ref{eg:no_common_noise}}\label{sec:eg_proof}

\begin{proof}[Proof of Example \ref{eg:common_noise}]

Let $X_i^N\coloneqq X_i^{u_i^N}$ for each $i\in [N]$. First, define $(Y_i)_{i\in\NN^*}\subset C([0,1];\RR)$ and $Y\in C([0,1];\RR)$ as: for each $i\in\NN^*$ and $t\in [0,1]$, let $Y_{t,i}\coloneqq \frac{t}{2}+W_{t,i}+B_t$, and $Y_t\coloneqq \frac{t}{2}+W_t+B_t$, then
\begin{equation*}
X_{t,i}^{N,1}-Y_{t,i} = 1_{t\geq \frac{1}{2}}1_{B_{\frac{1}{2}}<0}\left(\min\left\{ t-\frac{\lfloor 2Nt \rfloor}{2N}, \frac{1}{4N}\right\} -\frac{1}{2}\left( t-\frac{\lfloor 2Nt \rfloor}{2N}\right)\right), \quad t\in [0,1],
\end{equation*}
and $\Vert X_i^{N,1}-Y_i \Vert_\infty \leq \frac{1}{8N}$ for all $N\in \NN^*$ and $i\in [N]$. For all bounded Lipschitz function $\phi:C([0,1];\RR^2)\to\RR$,
\begin{align*}
\lim_{N\to\infty }\frac{1}{N}\sum_{i\in [N]} \phi((X_i^{N,1},X_i^{N,2})) = &\lim_{N\to\infty }\frac{1}{N}\sum_{i\in [N]} \left[\phi((X_i^{N,1},B)) - \phi((Y_i,B)) \right] + \lim_{N\to\infty }\frac{1}{N}\sum_{i\in [N]} \phi((Y_i,B)) \\
=& \lim_{N\to\infty }\frac{1}{N}\sum_{i\in [N]} \phi((Y_i,B)) = \EE\left[\phi((Y,B))\vert B \right],\quad \PP\text{-almost surely},
\end{align*}
where the second equality is due to the Lipschitz continuity of $\phi$, and the last equality is by the conditional strong law of large numbers. By \cite[Theorem 11.3.3]{dudley2002real} and the proof of \cite[Theorem 11.4.1]{dudley2002real}, there exists a countable convergence-determining family of bounded Lipschitz functions on $C([0,1];\RR^2)$  for weak convergence in $\cP(C([0,1];\RR^2))$. Therefore, $\frac{1}{N}\sum_{i\in[N]} \delta_{X_i^N}$ weakly converges to $\cL(Y,B\vert B)$, $\PP$-almost surely. Moreover, by \cite[Proposition A.1]{lacker2015mean} and standard SDE estimates,
\begin{align*}
\lim_{N\to\infty}\cW_2\left(\frac{1}{N}\sum_{i\in[N]} \delta_{X_i^N}, \cL(Y,B\vert B) \right)=0, \quad \lim_{N\to\infty}\sup_{t\in [0,1]}\cW_2\left(\varphi_t^{N,X}, \cL(Y_t,B_t\vert B) \right)=0, \quad \PP\text{-almost surely}.
\end{align*}
For any bounded Lipschitz function $\psi: [0,1]\times \cP_2(\RR^2\times A) \to\RR$,
\begin{align*}
  \int_0^1 \psi\left(t, \varphi_t^N \right)dt =& \int_0^{\frac{1}{2}}\psi\left(t, \varphi_t^{N,X}\otimes \delta_{\frac{1}{2}}  \right)dt + 1_{B_{\frac{1}{2}}\geq 0} \int_{\frac{1}{2}}^1 \psi\left(t, \varphi_t^{N,X}\otimes \delta_{\frac{1}{2}}  \right)dt \\
&+ 1_{B_{\frac{1}{2}}<0} \left[ \sum_{j=N}^{2N-1}\int_{\frac{j}{2N}}^{\frac{2j+1}{4N}} \psi\left(t, \varphi_t^{N,X}\otimes \delta_1  \right)dt +\int_{\frac{2j+1}{4N}}^{\frac{j+1}{2N}} \psi\left(t, \varphi_t^{N,X}\otimes \delta_0  \right)dt   \right].
\end{align*}
Denote $\Tilde{\mu}_t\coloneqq \cL(Y_t,B_t\vert B)$ for each $t\in [0,1]$, then it follows that $\PP$-almost surely,
\begin{align*}
&\lim_{N\to\infty}\left[\int_0^{\frac{1}{2}}\psi\left(t, \varphi_t^{N,X}\otimes \delta_{\frac{1}{2}}  \right)dt + 1_{B_{\frac{1}{2}}\geq 0} \int_{\frac{1}{2}}^1 \psi\left(t, \varphi_t^{N,X}\otimes \delta_{\frac{1}{2}}  \right)dt \right] \\
=& \int_0^{\frac{1}{2}}\psi\left(t, \Tilde{\mu}_t\otimes \delta_{\frac{1}{2}}  \right)dt + 1_{B_{\frac{1}{2}}\geq 0} \int_{\frac{1}{2}}^1 \psi\left(t, \Tilde{\mu}_t\otimes \delta_{\frac{1}{2}}  \right)dt,
\end{align*}
and
\begin{align*}
& \lim_{N\to\infty} \left[ \sum_{j=N}^{2N-1}\int_{\frac{j}{2N}}^{\frac{2j+1}{4N}} \psi\left(t, \varphi_t^{N,X}\otimes \delta_1  \right)dt +\int_{\frac{2j+1}{4N}}^{\frac{j+1}{2N}} \psi\left(t, \varphi_t^{N,X}\otimes \delta_0  \right)dt   \right] \\
=& \lim_{N\to\infty} \left[ \sum_{j=N}^{2N-1}\int_{\frac{j}{2N}}^{\frac{2j+1}{4N}} \psi\left(t, \Tilde{\mu}_t\otimes \delta_1  \right)dt +\int_{\frac{2j+1}{4N}}^{\frac{j+1}{2N}} \psi\left(t, \Tilde{\mu}_t\otimes \delta_0  \right)dt   \right] \\
=&\int_{\frac{1}{2}}^1 \frac{\psi(t,\Tilde{\mu}_t\otimes\delta_1) + \psi(t,\Tilde{\mu}_t\otimes\delta_0)}{2}dt  + \lim_{N\to\infty} \sum_{j=N}^{2N-1}\int_{\frac{j}{2N}}^{\frac{2j+1}{4N}} \left[\Tilde{\psi}(t) - \Tilde{\psi}\left(t+\frac{1}{4N}\right) \right]dt \\
=& \int_{\frac{1}{2}}^1 \frac{\psi(t,\Tilde{\mu}_t\otimes\delta_1) + \psi(t,\Tilde{\mu}_t\otimes\delta_0)}{2}dt,
\end{align*}
where for each $t\in [0,1]$, $\Tilde{\psi}(t)\coloneqq \frac{\psi(t,\Tilde{\mu}_t\otimes\delta_0)-\psi(t,\Tilde{\mu}_t\otimes\delta_1)}{2}$, the first equality is due to the Lipschitz continuity of $\psi$, and the last equality is by the Lipschitz continuity of $\psi$ and the uniform continuity of $t\mapsto \Tilde{\mu}_t$ under $\cW_2$. Therefore, it follows that $\PP$-almost surely,
\begin{align*}
&\lim_{N\to\infty} \int_0^1 \psi\left(t, \varphi_t^N \right)dt \\
&\quad = \int_0^1 \left[\left(1_{t<\frac{1}{2}} + 1_{t\geq\frac{1}{2}}1_{B_{\frac{1}{2}}\geq 0} \right)\psi\left(t, \Tilde{\mu}_t\otimes \delta_{\frac{1}{2}}  \right) +  1_{t\geq\frac{1}{2}}1_{B_{\frac{1}{2}}<0} \frac{\psi(t,\Tilde{\mu}_t\otimes\delta_1) + \psi(t,\Tilde{\mu}_t\otimes\delta_0)}{2}\right] dt.
\end{align*}
By weak convergence together with uniform integrability of the second moments, $\lim_{N\to\infty} \cW_2 (\delta_{\varphi_t^N}(dm)dt,\\ \Tilde{\Lambda})=0$, $\PP$-almost surely. This proves \eqref{eq:eg1_measure}.

For any bounded continuous function $\zeta:[0,1]\times\RR^2\times A\to \RR$, define $\psi_{\zeta}:[0,1]\times\cP_2(\RR^2\times A)\to\RR$ by: $\psi_{\zeta}(t,\nu)\coloneqq \int_{\RR^2\times A}\zeta(t,x,a)\nu(dx,da)$ for all $(t,\nu)\in [0,1]\times \cP_2(\RR^2\times A)$, then $\psi_{\zeta}$ is bounded continuous. Consequently,
\begin{align*}
&\lim_{N\to\infty} \int_0^1 \frac{1}{N}\sum_{i\in [N]}\zeta(t,X_{t,i}^N,u_{t,i}^N)dt =\lim_{N\to\infty}\int_0^1 \psi_{\zeta}(t,\varphi_t^N) dt \\
&\quad = \int_0^1 \int_{\cP_2(\RR^2\times A)}\psi_{\zeta}(t,m)\Tilde{\Lambda}_t(dm)dt = \int_0^1 \int_{\RR^2\times A}\zeta(t,x,a)\overline{\Lambda}_t(dx,da)dt,
\end{align*}
which implies \eqref{eq:eg1_measure_compressed}.

For the convergence of value functions, 
  the definition of $f$ and $g$ yields
\begin{align*}
& \partial_x f(t,x,a,\nu) = 0, \quad \partial_a f(t,x,a,\nu) = 2a - 2\left( \int_A a'\nu(\RR^2,da') \right), \quad \partial_{\nu} f(t,x,a,\nu)(x',a') = \left(0, 0, -2a \right),\\
& \partial_x g(x,\mu) = 2\int_{\RR^2}\begin{pmatrix} 1 & -1 \\ -1 & 1 \end{pmatrix} x' \mu(dx') +\begin{pmatrix} -1 \\ 1 \end{pmatrix},\quad \partial_{\mu} g(x,\mu)(x') = 2 \begin{pmatrix} 1 & -1 \\ -1 & 1 \end{pmatrix} x.
\end{align*}
Hence by \eqref{eq:F_G_N},
\begin{align*}
& F^N(t,x,a,\nu) = \left(1- \frac{1}{N}\right)a^2 - a\int_A a' \nu(\RR^2,da'),  \\
& G^N(x,\mu) = \begin{pmatrix} 1\\ -1 \end{pmatrix}^{\top} x \left(\int_{\RR^2 }\begin{pmatrix} 1\\ -1 \end{pmatrix}^{\top} x' \mu(dx') -1 \right) + \frac{1}{N}  x^{\top} \begin{pmatrix} 1 & -1 \\ -1 & 1 \end{pmatrix} x .
\end{align*}
Note that for all $N\in\NN^*$, $\int_{\RR^n\times A}F^N(t,x,a,\nu)\nu(dx,da) \geq -\frac{1}{N}$ due to   $A=[0,1]$,  and $\int_{\RR^2}G^N(x,\mu)\mu(dx)\geq -\frac{1}{4}$ by completing the square, which implies that  $V^N\geq -\frac{1}{N}-\frac{1}{4}$. Moreover,
\begin{align*}
&\Phi^N(\bu^N)  = \EE\left[ -\frac{1}{N}\left( \frac{1}{8}+\frac{1}{8}1_{B_{\frac{1}{2}}\geq 0} + \frac{1}{4}1_{B_{\frac{1}{2}}< 0} \right)+\left( \frac{1}{N}\sum_{i\in[N]}W_{1,i} \right)^2-\frac{1}{4} +\frac{1}{N^2}\sum_{i\in[N]} \left(\frac{1}{2}+W_{1,i} \right)^2 \right]\\
&\quad =\frac{31}{16N}-\frac{1}{4},
\end{align*}
which implies that $V^N\leq \Phi^N(\bu^N)=\frac{31}{16N}-\frac{1}{4}$. Therefore, 
$\Phi^N(\bu^N)\le V^N+\frac{47}{16N}$ and 
$\lim_{N\to\infty}V^N=-\frac{1}{4}$.

For the MFC problem, 
a direct computation shows that $F^\infty$ and $G^\infty$ in \eqref{eq:mf_potential_cost} are given by
\begin{align*}
F^{\infty}(t,x,a,\nu)=& a^2 - a \int_{A} a'  \nu(\RR^2,da'), \\
G^{\infty}(x,\mu)=& \begin{pmatrix} 1\\ -1 \end{pmatrix}^{\top} x \left(\int_{\RR^2 }\begin{pmatrix} 1\\ -1 \end{pmatrix}^{\top} x' \mu(dx') -1 \right).
\end{align*}
Since
$\int_{\RR^n\times A}F^{\infty}(t,x,a,\nu)\nu(dx,da)\geq 0$ and $\int_{\RR^n}G^{\infty}(x,\mu)\mu(dx)\geq -\frac{1}{4}$,  $V_V=V^{\infty}\geq -\frac{1}{4}$. This along with  $\Tilde{\Phi}(\PP^{\infty})=-\frac{1}{4}$ implies that  $V_V=V^{\infty}=\Tilde{\Phi}(\PP^{\infty})=-\frac{1}{4}$.
\end{proof}

\begin{proof}[Proof of Example \ref{eg:no_common_noise}]
Let $X_i^N\coloneqq X_i^{u_i^N}$ for each $i\in [N]$. First, define $(Y_i)_{i\in\NN^*}\subset C([0,1];\RR)$ and $Y\in C([0,1];\RR)$ as: for each $i\in\NN^*$ and $t\in [0,1]$, let $Y_{t,i}\coloneqq 1_{W_{\frac{1}{2},1}\geq 0}\left[(2t-1)^+\wedge\frac{1}{2}\right] + 1_{W_{\frac{1}{2},1} <  0} \left(t-\frac{1}{2} \right)^+ + W_{t,i}$, and $Y_t \coloneqq 1_{W_{\frac{1}{2},1}\geq 0}\left[(2t-1)^+\wedge\frac{1}{2}\right] + 1_{W_{\frac{1}{2},1} <  0} \left(t-\frac{1}{2} \right)^+ + W_t$, then $\Vert X_i^N-Y_i\Vert \leq \frac{1}{4N}$ for all $N\in\NN^*$ and $i\in[N]$. For all bounded Lipschitz function $\phi:C([0,1];\RR)\to\RR$,
\begin{align*}
\lim_{N\to\infty}\frac{1}{N}\sum_{i\in [N]} \phi(X_i^N) = \lim_{N\to\infty}\frac{1}{N}\sum_{i\in [N]} \phi(Y_i) = \lim_{N\to\infty}\frac{1}{N-1}\sum_{i\in [N]\backslash\{1\}} \phi(Y_i) = \EE\left[ \phi(Y)\vert 1_{W_{\frac{1}{2},1}\geq 0} \right],
\end{align*}
where the first equality is due to the Lipschitz continuity of $\phi$, and the last equality is by the conditional strong law of large numbers. It follows by similar arguments to those used in the proof of Example \ref{eg:common_noise} that
for $\PP${-almost surely},
\begin{align*}
\lim_{N\to\infty}\cW_2\left(\frac{1}{N}\sum_{i\in[N]} \delta_{X_i^N}, \cL(Y \vert 1_{W_{\frac{1}{2},1}\geq 0}) \right)=0, \quad \lim_{N\to\infty}\sup_{t\in [0,1]}\cW_2\left(\varphi_t^{N,X}, \cL(Y_t \vert 1_{W_{\frac{1}{2},1}\geq 0}) \right)=0.
\end{align*}
For any bounded Lipschitz function $\psi: [0,1]\times \cP_2(\RR \times A) \to\RR$,
\begin{align*}
  \int_0^1 \psi\left(t, \varphi_t^N \right)dt =& \int_0^{\frac{1}{2}}\psi\left(t, \varphi_t^{N,X}\otimes \delta_{0}  \right)dt + 1_{W_{\frac{1}{2},1}\geq 0} \left[\int_{\frac{1}{2}}^{\frac{3}{4}} \psi\left(t, \varphi_t^{N,X}\otimes \delta_2  \right)dt + \int_{\frac{3}{4}}^1 \psi\left(t, \varphi_t^{N,X}\otimes \delta_0  \right)dt\right] \\
&+ 1_{W_{\frac{1}{2},1}<0} \left[ \sum_{j=N}^{2N-1}\int_{\frac{j}{2N}}^{\frac{2j+1}{4N}} \psi\left(t, \varphi_t^{N,X}\otimes \delta_2  \right)dt +\int_{\frac{2j+1}{4N}}^{\frac{j+1}{2N}} \psi\left(t, \varphi_t^{N,X}\otimes \delta_0  \right)dt   \right].
\end{align*}
Denote $\Tilde{\mu}_t\coloneqq \cL(Y_t\vert 1_{W_{\frac{1}{2},1}\geq 0})$, then it follows by similar arguments to those used in the proof of Example \ref{eg:common_noise} that
\begin{align*}
\lim_{N\to\infty}\int_0^1\psi(t,\varphi_t^N)dt =& \int_0^{\frac{1}{2}}\psi(t,\Tilde{\mu}_t\otimes \delta_0)dt + 1_{W_{\frac{1}{2},1}\geq 0} \left[\int_{\frac{1}{2}}^{\frac{3}{4}} \psi\left(t, \Tilde{\mu}_t\otimes \delta_2  \right)dt + \int_{\frac{3}{4}}^1 \psi\left(t, \Tilde{\mu}_t\otimes \delta_0  \right)dt\right] \\
& + 1_{W_{\frac{1}{2},1}< 0} \int_{\frac{1}{2}}^1 \frac{\psi(t,\Tilde{\mu}_t\otimes\delta_2) + \psi(t,\Tilde{\mu}_t\otimes\delta_0)}{2} dt.
\end{align*}
Combined with the fact that $\cL(\theta,W)=\cL(1_{W_{\frac{1}{2},1}\geq 0},W)$, this yields \eqref{eq:eg2_measure}.

For the convergence of value functions, the definitions of $f$ and $g$ yield 
\begin{align*}
& \partial_x f(t,x,a,\nu) = 0, \quad \partial_a f(t,x,a,\nu) = 2a - 2\left( \int_A a'\nu(\RR,da') \right), \quad \partial_{\nu} f(t,x,a,\nu)(x',a') = \left(0, -2a \right),\\
& \partial_x g(x,\mu) = 2\int_{\RR} x' \mu(dx') -1,\quad \partial_{\mu} g(x,\mu)(x') = 2  x.
\end{align*}
Hence by \eqref{eq:F_G_N},
\begin{align*}
 & F^N(t,x,a,\nu) = \left(1- \frac{1}{N}\right)a^2 - a \int_{A} a'  \nu(\RR,da'),  \\
 & G^N(x,\mu) = x\left( \int_{\RR} x'\mu(dx') -1 \right) + \frac{x^2}{N},
\end{align*}
Note that $\int_{\RR\times A}F^N(t,x,a,\nu)\nu(dx,da)\geq -\frac{4}{N}$ and $\int_{\RR}G^N(x,\mu)\mu(dx)\geq -\frac{1}{4}$ for each $N\in\NN^*$, it follows that $V^N\geq -\frac{4}{N}-\frac{1}{4}$. Moreover,
\begin{align*}
\Phi^N(\bu^N)=\EE\left[ -\frac{1}{N}+\left( \frac{1}{N}\sum_{i\in [N]} W_{1,i} \right)^2 -\frac{1}{4} + \frac{1}{N^2} \sum_{i\in [N]} \left( \frac{1}{2}+W_{1,i} \right)^2 \right] = \frac{5}{4N}-\frac{1}{4},
\end{align*}
then $V^N\leq \Phi^N(\bu^N)= \frac{5}{4N}-\frac{1}{4}$. Therefore, $\lim_{N\to\infty} V^N=-\frac{1}{4}$, and $\bu^N$ is an $\varepsilon_N$-optimal control for some $\varepsilon_N\leq \frac{21}{4N}$.

For the MFC problem, 
a direct computation shows that $F^\infty$ and $G^\infty$ in \eqref{eq:mf_potential_cost} are given by
\begin{equation*}
F^{\infty}(t,x,a,\nu)=a^2 - a \int_{A} a'  \nu(\RR,da'), \quad G^{\infty}(x,\mu)=   x  \left( \int_{\RR} x'\mu(dx') -1 \right).
\end{equation*}
Since
$\int_{\RR\times A}F^{\infty}(t,x,a,\nu)\nu(dx,da)\geq 0$ and $\int_{\RR}G^{\infty}(x,\mu)\mu(dx)\geq -\frac{1}{4}$,  $V_V=V^{\infty}\geq -\frac{1}{4}$. This along  $\Tilde{\Phi}(\PP^{\infty})=-\frac{1}{4}$ yields $V_V=V^{\infty}=\Tilde{\Phi}(\PP^{\infty})=-\frac{1}{4}$. 

The proof of \eqref{eq:eg2_measure_compressed} is analogous to that of \eqref{eq:eg1_measure_compressed}. This finishes the proof.
\end{proof}

\end{document}